\newtheorem{theorem}{Theorem}[section]
\newtheorem{lemma}[theorem]{Lemma}
\newtheorem{proposition}[theorem]{Proposition}
\newtheorem{corollary}[theorem]{Corollary}
\theoremstyle{definition}
\newtheorem{remark}[theorem]{Remark}
\newtheorem{definition}[theorem]{Definition}
\newtheorem*{conjecture*}{Conjecture}
\newcommand{\inr}[2]{\langle #1, #2 \rangle}
\newcommand{\R}{\mathbb{R}}
\newcommand{\E}{\mathbb{E}}
\newcommand{\n}{\mathbf{n}}
\newcommand{\tang}{\mathbf{t}}
\newcommand{\calH}{\mathcal{H}}
\newcommand{\cyclic}{\mathcal{C}}
\newcommand{\per}{P_\gamma}
\newcommand{\Id}{\mathrm{Id}}
\newcommand{\simplex}{\Delta}
\renewcommand{\div}{\mathrm{div}}
\newcommand{\isop}{I^{(1)}_m}
\newcommand{\pot}{W}
\newcommand{\modelV}{V}
\newcommand{\navg}{\overline{\n}}
\renewcommand{\H}{\mathcal{H}}
\newcommand{\Trips}{\mathcal{T}}
\newcommand{\norm}[1]{\left\Vert#1\right\Vert}
\newcommand{\abs}[1]{\left\vert#1\right\vert}
\newcommand{\set}[1]{\left\{#1\right\}}
\newcommand{\brac}[1]{\left(#1\right)}
\newcommand{\scalar}[1]{\left \langle #1 \right \rangle}
\newcommand{\Real}{\mathbb{R}}
\newcommand{\B}{\mathcal{B}}
\newcommand{\eps}{\epsilon}
\renewcommand{\div}{\text{div}}
\renewcommand{\S}{\mathbb{S}}
\newcommand{\G}{\mathbb{G}}
\renewcommand{\v}{\textrm{Vol}}
\newcommand{\F}{\mathcal{F}}
\DeclareMathOperator{\interior}{int}
\DeclareMathOperator{\tr}{tr}
\DeclareMathOperator{\cl}{cl}
\newcommand{\diffII}[3]{\ifthenelse{\equal{#2}{#3}}
{\frac{d^2 #1}{d #2^2}}
{\frac{d^2 #1}{d #2 d #3}}
}
\newcommand{\diffIIat}[4]{\left.\diffII{#1}{#2}{#3}\right|_{#4}}
\newcommand{\diffat}[3]{\left.\frac{d #1}{d #2}\right|_{#3}}
\newcommand{\pdiff}[2]{\frac{\partial #1}{\partial #2}}
\newcommand{\pdiffat}[3]{\left.\frac{\partial #1}{\partial #2}\right|_{#3}}
\begin{document}

\title{The Gaussian Double-Bubble Conjecture}

\author{Emanuel Milman\textsuperscript{1} and Joe Neeman\textsuperscript{2}}

\maketitle

\textbf{For publication purposes, this manuscript has been merged with our subsequent one ``The Gaussian Multi-Bubble Conjecture" into a single paper entitled ``The Gaussian Double-Bubble and Multi-Bubble Conjectures", which now supersedes both previous manuscripts -- see \href{https://arxiv.org/abs/1805.10961}{arXiv:1805.10961}}.

\abstract{We establish the Gaussian Double-Bubble Conjecture: 
the least Gaussian-weighted perimeter way to decompose $\R^n$ into three cells of prescribed (positive) Gaussian measure is to use a tripod-cluster, whose interfaces consist of three half-hyperplanes meeting along an $(n-2)$-dimensional plane at $120^{\circ}$ angles (forming a tripod or ``Y" shape in the plane). Moreover, we prove that tripod-clusters are the unique isoperimetric minimizers (up to null-sets).  
}

\footnotetext[1]{Department of Mathematics, Technion - Israel
Institute of Technology, Haifa 32000, Israel. Email: emilman@tx.technion.ac.il.}

\footnotetext[2]{Department of Mathematics, University of Texas at Austin. Email: joeneeman@gmail.com. \\
The research leading to these results is part of a project that has received funding from the European Research Council (ERC) under the European Union's Horizon 2020 research and innovation programme (grant agreement No 637851). This material is also based upon work supported by the National Science Foundation under Grant No. 1440140, while the authors were in residence at the Mathematical Sciences Research Institute in Berkeley, California, during the Fall semester of 2017.
}

\section{Introduction}

Let $\gamma = \gamma^n$ denote the standard Gaussian probability measure on Euclidean space $(\R^n,\abs{\cdot})$: \[
 \gamma^n := \frac{1}{(2\pi)^{\frac{n}{2}}} e^{-\frac{|x|^2}{2}} \, dx =: e^{-\pot(x)}\, dx.
\]
More generally, if $\H^{k}$ denotes the $k$-dimensional Hausdorff measure, let $\gamma^k$ denote its Gaussian-weighted counterpart: 
\[
\gamma^k := e^{-\pot(x)} \H^{k} .
\]
The Gaussian-weighted (Euclidean) perimeter of a Borel set $U \subset \R^n$ is defined as:
\[ \per(U) := \sup \left\{\int_U (\div X - \inr{\nabla \pot}{X}) \, d\gamma: X \in C_c^\infty(\R^n; T \R^n), |X| \le 1 \right\}.
\] For nice sets (e.g. open sets with piecewise smooth boundary), $\per(U)$ is known to agree with $\gamma^{n-1}(\partial U)$ (see, e.g.~\cite{MaggiBook}). The weighted perimeter $\per(U)$ has the advantage of being lower semi-continuous with respect to $L^1(\gamma)$ convergence, and thus fits well with the direct method of calculus-of-variations.

The classical Gaussian isoperimetric inequality, established independently by Sudakov--Tsirelson \cite{SudakovTsirelson} and Borell \cite{Borell-GaussianIsoperimetry} in 1975, asserts that among all Borel sets $U$ in $\R^n$ having prescribed Gaussian measure $\gamma(U) = v \in [0,1]$, halfplanes minimize Gaussian-weighted perimeter $\per(U)$ (see also \cite{EhrhardPhiConcavity, BakryLedoux, BobkovGaussianIsopInqViaCube, BobkovLocalizedProofOfGaussianIso,MorganManifoldsWithDensity}).
Later on, it was shown by Carlen--Kerce \cite{CarlenKerceEqualityInGaussianIsop} (see also \cite{EhrhardGaussianIsopEqualityCases,MorganManifoldsWithDensity, McGonagleRoss:15}), that up to $\gamma$-null sets, halfplanes are in fact the \emph{unique} minimizers for the Gaussian isoperimetric inequality. 

\medskip

In this work, we extend these classical results to the case of $3$-clusters. A \emph{$k$-cluster} $\Omega = (\Omega_1, \ldots, \Omega_k)$ is a $k$-tuple of Borel subsets $\Omega_i \subset \R^n$ called cells, such that $\set{\Omega_i}$ are pairwise disjoint, $\per(\Omega_i) <
\infty$ for each $i$, and  $\gamma(\R^n \setminus \bigcup_{i=1}^k \Omega_i) = 0$. Note that the cells are not required to be connected. The total Gaussian
perimeter of a cluster $\Omega$ is defined as:
\[
  \per(\Omega) := \frac 12 \sum_{i=1}^k \per(\Omega_i) .
\]
The Gaussian measure of a cluster is defined as:
\[
  \gamma(\Omega) := (\gamma(\Omega_1), \ldots, \gamma(\Omega_k)) \in \simplex^{(k-1)} ,
\]
where $\simplex^{(k-1)} := \{v \in \R^k: v_i \ge 0 ~,~ \sum_{i=1}^k v_i = 1\}$ denotes the $(k-1)$-dimensional simplex. The isoperimetric problem for $k$-clusters consists of identifying those clusters $\Omega$ of prescribed Gaussian measure $\gamma(\Omega) = v \in \simplex^{(k-1)}$ which minimize the total Gaussian perimeter $\per(\Omega)$. 

\smallskip
Note that easy properties of the perimeter ensure that for a $2$-cluster $\Omega$, $\per(\Omega) = \per(\Omega_1) = \per(\Omega_2)$, and so the case $k=2$ corresponds to the classical isoperimetric setup when testing the perimeter of a single set of prescribed measure. In analogy to the classical \emph{unweighted} isoperimetric inequality in Euclidean space $(\R^n,\abs{\cdot})$, in which the Euclidean ball minimizes (unweighted) perimeter among all sets of prescribed Lebesgue measure \cite{BuragoZalgallerBook,MaggiBook}, we will refer to the case $k=2$ as the ``single-bubble" case (with the bubble being $\Omega_1$ and having complement $\Omega_2$). Accordingly, the case $k=3$ is called the ``double-bubble" problem. See below for further motivation behind this terminology and related results. 

\smallskip
A natural conjecture is then the following:
\begin{conjecture*}[Gaussian Multi-Bubble Conjecture]
For all $k \leq n+1$, the least (Gaussian-weighted) perimeter way to decompose $\R^n$ into $k$ cells of prescribed (Gaussian) measure $v \in \interior \simplex^{(k-1)}$ is by using the Voronoi cells of $k$ equidistant points in $\R^n$. \end{conjecture*}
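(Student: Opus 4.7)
The plan is to combine existence/regularity theory for Gaussian isoperimetric clusters with a semigroup-based functional inequality in the spirit of Bobkov's two-point and Neeman's single-bubble arguments. First, the direct method, using lower semi-continuity of $\per$ under $L^1(\gamma)$ convergence and the sub-Gaussian tails of $\gamma$, produces minimizing $k$-clusters for any prescribed $v \in \interior \simplex^{(k-1)}$. An Almgren-type regularity theory adapted to the Gaussian weight shows that the interfaces $\Sigma_{ij}$ are smooth $(n-1)$-dimensional hypersurfaces meeting at triple junctions along $(n-2)$-submanifolds at $120^{\circ}$, with higher-order junctions along lower-dimensional strata at angles dictated by Plateau's condition; the first variation formula provides Lagrange multipliers $\lambda_1, \ldots, \lambda_k$ satisfying $H_{ij} - \sscalar{x, \n_{ij}} = \lambda_i - \lambda_j$ on each $\Sigma_{ij}$. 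The candidate minimizer $\Omega^*_v$ is the simplicial cluster obtained as the Voronoi partition of $k$ equidistant points placed at the vertices of a regular $(k-1)$-simplex embedded in $\R^n$ (possible precisely because $k \leq n+1$), scaled and translated so that $\gamma(\Omega^*_v) = v$; its interfaces are flat half-hyperplanes meeting along a common $(n-k+1)$-affine subspace, and its perimeter defines the conjectured profile $\mathcal{J}(v) := \per(\Omega^*_v)$ on $\interior \simplex^{(k-1)}$.

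The core of the argument is a functional inequality asserting that for every sufficiently regular $u : \R^n \to \simplex^{(k-1)}$ with Gaussian mean $\int u \, d\gamma = v$, a natural extension of $\per$ to such ``fuzzy partitions'' satisfies $\per(u) \geq \mathcal{J}(v)$, which for $u = (\mathbf{1}_{\Omega_1}, \ldots, \mathbf{1}_{\Omega_k})$ recovers the cluster perimeter. One establishes this by running the Ornstein--Uhlenbeck semigroup $P_t u$ and studying the monotonicity in $t$ of a $\mathcal{J}$-derived functional of $P_t u$ on $\simplex^{(k-1)}$, using the Bakry--Emery heat identities and integration by parts. Monotonicity reduces, after standard manipulations, to a pointwise semi-definiteness statement: a specific symmetric bilinear form built from the Hessian and gradient of $\mathcal{J}$, evaluated at $P_t u(x)$, has the correct sign on the tangent space of $\simplex^{(k-1)}$. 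Sending $t \to \infty$, where $P_t u$ converges to the constant $v$, then yields the desired lower bound for every cluster $\Omega$.

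The principal obstacle is verifying this pointwise matrix inequality for the explicitly-computed $\mathcal{J}$ uniformly on $\interior \simplex^{(k-1)}$. Each pairwise interaction reduces, after diagonalization, to the classical Gaussian single-bubble inequality, but the genuinely multi-bubble off-diagonal entries encode the global geometry of the regular simplex through multivariable Mills-ratio-type identities on the $k$ Voronoi cells, and these must be handled \emph{simultaneously}: the constraint $k \leq n+1$ enters precisely here, since a regular $(k-1)$-simplex embeds isometrically in $\R^n$ only when $n \geq k-1$, and the semi-definiteness fails beyond this threshold. A secondary but substantial challenge is upgrading the inequality to the uniqueness/rigidity statement: tracking the equality cases through the semigroup flow and showing that any minimizing cluster agrees up to $\gamma$-null sets with $\Omega^*_v$ (modulo rigid motions preserving $\gamma$), in the spirit of the Carlen--Kerce classification for the single bubble.
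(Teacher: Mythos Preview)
First, the paper does not prove this conjecture in full: it is stated as open, and only the Double-Bubble case $k=3$ is established (Theorems~\ref{thm:main1} and~\ref{thm:main-uniqueness}), with general $k$ deferred to a subsequent work. Even for $k=3$, your route is entirely different from the paper's. The paper works directly with minimizing clusters: it computes second variations of weighted volume and perimeter (crucially, under constant vector-fields, i.e.\ translations, for which the Gaussian-specific identity of Theorem~\ref{thm:formula} holds), derives a matrix-valued differential inequality $\nabla^2 I \le -L_A^{-1}$ for the profile on $\interior\simplex$, and closes with a maximum principle comparing $I$ to the explicitly computed model profile $I_m$ satisfying $\nabla^2 I_m = -L_{A^m}^{-1}$. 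No semigroup, no fuzzy partitions, no Bobkov-type functional enters.

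Your proposal has a genuine gap at its core, which you yourself flag. The entire analytic content of the argument is the ``pointwise semi-definiteness statement'' for the bilinear form built from $\nabla\mathcal{J}$ and $\nabla^2\mathcal{J}$, and you leave it unverified. For $k=2$ the corresponding scalar inequality is Bobkov's two-point inequality; for $k\ge 3$ you would need both (i) to \emph{specify} the extension of $\per$ to maps $u:\R^n\to\simplex^{(k-1)}$---you write ``a natural extension'' without saying which, and there is no canonical choice that obviously makes the semigroup argument run---and (ii) to prove the resulting matrix inequality at every point of $\interior\simplex^{(k-1)}$. These are not technicalities: step (ii) is where all of the geometry of the problem lives, and the paper's identity $\nabla^2 I_m = -L_{A^m}^{-1}$ (Proposition~\ref{prop:I_m-equation}) does not by itself yield a semigroup-monotone functional. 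As written, what you have is a research programme rather than a proof.
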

\noindent 
Recall that the Voronoi cells of $\set{x_1,\ldots,x_k} \subset \R^n$ are defined as:
\[
\Omega_i = \interior \set{ x \in \R^n : \min_{j=1,\ldots,k} \abs{x-x_j} = \abs{x-x_i} } \;\;\; i=1,\ldots, k ~ ,
\]
where $\interior$ denotes the interior operation (to obtain pairwise disjoint cells). Indeed, when $k=2$, the Voronoi cells are precisely halfplanes, and the single-bubble conjecture holds by the classical Gaussian isoperimetric inequality. When $k=3$ and $v \in \interior \Delta^{(2)}$, the conjectured isoperimetric minimizing clusters are tripods (also referred to as ``Y's", ``rotors" or ``propellers" in the literature), whose interfaces are three half-hyperplanes meeting along an $(n-2)$-dimensional plane at $120^\circ$ angles (forming a tripod or ``Y" shape in the plane). When $v \in \partial \Delta^{(2)}$, the 3-cluster has (at least) one null-cell, reducing to the $k=2$ case (and indeed two complementing half-planes may be seen as a degenerate tripod cluster, whose vertex is at infinity). 

\smallskip
Tripod clusters are the naturally conjectured minimizers in the double-bubble case, as their interfaces have constant Gaussian mean-curvature (being flat), and meet at $120^{\circ}$ angles, both of which are necessary conditions for any extremizer of (Gaussian) perimeter under a (Gaussian) measure constraint -- see Section \ref{sec:first-order}. Note that tripod clusters are also known to be the global extremizers in other optimization problems, such as the problem of maximizing the sum of the squared lengths of the Gaussian moments of the cells in $\R^3$ (see Heilman--Jagannath--Naor \cite{HeilmanJagannathNaor-PropellerInR3}). On the other hand, for the Gaussian noise-stability question, halfplanes are known to maximize noise-stability in the single-bubble case \cite{Borell-GaussianNoiseStability, MosselNeeman-GaussianNoiseStability}, but tripod-clusters are known to not always maximize noise-stability in the double-bubble case \cite{HeilmanMosselNeeman-GaussianNoiseStability}.

\smallskip
Our main result in this work is the following:

\begin{theorem}[Gaussian Double-Bubble Theorem] \label{thm:main1} \label{thm:main-I-I_m}
The Gaussian Double-Bubble Conjecture (case $k=3$) holds true for all $n \geq 2$. 
\end{theorem}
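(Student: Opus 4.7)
The plan is to follow the classical variational blueprint adapted to the Gaussian/clustered setting: existence, regularity, first-order conditions, second-order (stability) conditions, and finally a rigidity step forcing the minimizer to be a tripod.

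First I would establish \textbf{existence and regularity}. The weighted perimeter $\per$ is lower semi-continuous under $L^{1}(\gamma)$ convergence, and the set of cluster configurations with prescribed Gaussian volume $v\in\interior\simplex^{(2)}$ is closed under a concentration-compactness style argument (Gaussian measure prevents mass escape to infinity, since otherwise one could absorb the lost mass into the complement cell and strictly decrease perimeter). Hence a minimizer $\Omega=(\Omega_1,\Omega_2,\Omega_3)$ exists. Standard geometric measure theory for $(M,\varepsilon,\delta)$-minimal clusters (Almgren/Taylor) carries over to the Gaussian weight since the density $e^{-\pot}$ is smooth and bounded on compact sets: away from a relatively closed singular set of Hausdorff dimension $\le n-2$, the interfaces $\Sigma_{ij}=\partial\Omega_i\cap\partial\Omega_j$ are smooth hypersurfaces, and the codimension-$2$ stratum consists (generically) of a smooth triple-junction $\Trips$ where three sheets meet at $120^{\circ}$ angles.

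Next I would write down the \textbf{first-order (Euler--Lagrange) conditions} (as alluded to in the introduction): on each smooth interface $\Sigma_{ij}$ one has $H_{\Sigma_{ij}}-\inr{x}{\n_{ij}}=\lambda_i-\lambda_j$ for Lagrange multipliers $\lambda_1,\lambda_2,\lambda_3\in\R$ (with $\sum\lambda_i=0$ up to normalization), and at $\Trips$ the three unit conormals sum to zero. The goal becomes to prove that each interface is an affine half-hyperplane meeting the others along a common $(n-2)$-plane.

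The main body of the argument is the \textbf{second-order (stability) inequality}. For any compactly supported normal perturbation $f=(f_{ij})$ of the interfaces that preserves the Gaussian volumes of all three cells and is compatible with the $120^{\circ}$-condition at $\Trips$, the second variation of $\per$ yields an inequality of the form
\[
Q(f):=\sum_{i<j}\int_{\Sigma_{ij}}\Bigl(|\nabla^{\Sigma_{ij}}f_{ij}|^{2}-\bigl(|\II_{\Sigma_{ij}}|^{2}-1\bigr)f_{ij}^{2}\Bigr)\,d\gamma^{n-1}\;-\;(\text{boundary term on }\Trips)\;\ge\;0 .
\]
Here I would plug in the \emph{linear test functions} $f_{ij}=\inr{a_{i}-a_{j}}{x}$ built from a carefully chosen triple $(a_{1},a_{2},a_{3})\in(\R^{n})^{3}$ with $\sum a_{i}=0$, since these are precisely the perturbations that correspond to infinitesimal translations of the three Voronoi generators of a candidate tripod cluster. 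On such $f_{ij}$ one computes $\nabla^{\Sigma_{ij}}f_{ij}$ and exploits $\inr{x}{\n}$-type identities together with $H_{\Sigma_{ij}}=\inr{x}{\n_{ij}}+\text{const}$ to reduce $Q(f)$ to an expression of the form $-\sum_{i<j}\int_{\Sigma_{ij}}|\II_{\Sigma_{ij}}|^{2}\inr{a_i-a_j}{x}^{2}\,d\gamma^{n-1}$ plus a boundary term at $\Trips$ which vanishes thanks to the $120^{\circ}$ condition and $\sum a_{i}=0$. Varying the parameters $(a_i)$ over a sufficiently large family (e.g.\ all triples with $\sum a_i=0$) forces $\II_{\Sigma_{ij}}\equiv 0$ on every interface, so each $\Sigma_{ij}$ is a subset of an affine hyperplane.

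Given that the interfaces are affine, the final \textbf{rigidity step} is a geometric/combinatorial argument: three flat hyperplane pieces meeting along a common codimension-$2$ affine subspace at $120^{\circ}$ angles, together with the constancy of $H-\inr{x}{\n}$ and the requirement that $\bigcup\Omega_i$ exhausts $\R^n$, force the configuration to be a tripod cluster. Uniqueness up to null sets comes by tracking equality in the stability inequality (no additional nontrivial Jacobi fields beyond translations of the tripod spine). I expect the main obstacle to be the stability step: identifying the correct space of admissible volume-preserving perturbations that are compatible with the $120^{\circ}$ Herring condition at the singular triple-junction $\Trips$, and controlling the boundary terms of the second-variation formula arising from these junctions so that $Q(f)\ge 0$ really yields $\II\equiv 0$. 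The technical difficulty is that ordinary hypersurface stability arguments do not see $\Trips$, and one must genuinely analyze the coupled system of three interfaces as a single object, much as in the Hutchings--Morgan--Ritor\'e--Ros treatment of the Euclidean double bubble.
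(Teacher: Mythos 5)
Your proposal follows the ``identify the minimizer directly'' blueprint (existence, regularity of the singular set, stability with linear test functions forcing $\II\equiv 0$, then rigidity), which is genuinely different from the paper's route. The paper never identifies the minimizer in the course of proving Theorem \ref{thm:main-I-I_m}: it instead constructs the model profile $I_m$, shows $\nabla^2 I_m = -L_{A^m}^{-1}$, derives the weak differential inequality $\nabla^2 I_m(v_0) \le -L_{A}^{-1}$ at an interior minimum point $v_0$ of $I-I_m$ from stability under \emph{translations only}, and closes with a trace/maximum-principle comparison $2I_m(v_0) = -\tr[(\nabla^2 I_m(v_0))^{-1}] \le \tr(L_A) = 2I(v_0)$. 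The whole point of that design is to avoid the ingredients your approach needs. The characterization of minimizers as tripods is a separate theorem, deduced afterwards from the equality cases.

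The concrete gaps in your sketch are the following. First, your stability step requires the second-variation formula with boundary terms along the singular set $\Trips$, hence the $120^{\circ}$ triple-junction structure and control of the lower-dimensional strata; for $n\ge 4$ this rests on results only recently proved (Colombo--Edelen--Spolaor, announced by White), and the paper explicitly refuses to use them (Remark \ref{rem:no-higher-regularity}). The paper sidesteps all junction contributions because for the constant field $X\equiv w$ the tangential divergence terms vanish by stationarity (Theorem \ref{thm:first-order-conditions-expanded}\ref{it:first-order-div}). Second, your test functions $f_{ij}=\inr{a_i-a_j}{x}$ are neither compactly supported, bounded, nor volume-preserving: you need the compensator/implicit-function machinery to restore the volume constraint (otherwise $Q(f)\ge 0$ is simply not available), and you need integrability control at infinity (the paper's volume/perimeter regularity and cutoff lemmas) to even define and integrate by parts in $\delta^2_X A$. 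Third, even granting flat interfaces, the rigidity step is not routine: a priori an interface with constant normal and constant $H_{ij,\gamma}$ could still be a proper subset of a hyperplane, cells could be disconnected, and the minimizer could be effectively one-dimensional; ruling these out is exactly the content of the paper's Section \ref{sec:uniqueness}, which needs the equality case of a matrix Cauchy--Schwarz inequality, the identity $A_{ij}=A^m_{ij}>0$, and a connectivity argument via relative perimeter. Finally, your route must still compute that the perimeter of the resulting tripod with measure $v$ equals $I_m(v)$, so the model construction (the diffeomorphism $\modelV$ of Lemma \ref{lem:V-diffeo}) cannot be skipped. Your strategy is essentially the one the authors reserve for the general multi-bubble case in the follow-up work, where the extra regularity input is unavoidable; as a proof of the double-bubble theorem as written, it has real missing pieces.
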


In addition, we resolve the uniqueness question:
\begin{theorem}[Uniqueness of Minimizing Clusters] \label{thm:main-uniqueness}
Up to $\gamma^n$-null sets, tripod clusters are the \emph{unique} minimizers of Gaussian perimeter among all clusters of prescribed Gaussian measure $v \in \interior \Delta^{(2)}$. 
\end{theorem}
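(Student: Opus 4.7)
The plan is to extract uniqueness from the rigidity content of the key inequality used to prove Theorem \ref{thm:main1}, in the spirit of Carlen--Kerce's treatment of the single-bubble case \cite{CarlenKerceEqualityInGaussianIsop}. Let $\Omega = (\Omega_1, \Omega_2, \Omega_3)$ be any isoperimetric minimizer with $\gamma(\Omega) = v \in \interior \simplex^{(2)}$, and let $T_v$ denote a tripod cluster of the same Gaussian measure; by Theorem \ref{thm:main1}, $\per(\Omega) = \per(T_v)$.

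First, I would invoke the standard regularity theory for perimeter-minimizing clusters, which, since the Gaussian density is locally bounded above and below away from zero, reduces to the Euclidean theory of Almgren and Taylor: away from a singular set of Hausdorff dimension at most $n-2$, each interface $\Sigma_{ij} = \partial^* \Omega_i \cap \partial^* \Omega_j$ is a smooth hypersurface. The first-order conditions (see Section~\ref{sec:first-order}) then imply that each $\Sigma_{ij}$ carries constant Gaussian mean curvature $\lambda_{ij}$, and that along the smooth $(n-2)$-dimensional strata of the singular set the three interfaces meet at $120^\circ$ angles.

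The heart of the argument is to show that equality in the inequality of Theorem \ref{thm:main1} forces each $\Sigma_{ij}$ to be flat. Assuming the proof of Theorem \ref{thm:main1} proceeds by evolving simplex-valued Bobkov-type functions associated to the vector of characteristic functions $(\mathbf{1}_{\Omega_1}, \mathbf{1}_{\Omega_2}, \mathbf{1}_{\Omega_3})$ under the Ornstein--Uhlenbeck semigroup and establishing monotonicity of a perimeter-like functional, the identity $\per(\Omega) = \per(T_v)$ propagates to equality for all $t > 0$. Tracing this equality through the Bochner-type identity that drives the monotonicity should yield that the second fundamental form of $\Sigma_{ij}$ vanishes identically on the smooth stratum and that the normal direction is constant along $\Sigma_{ij}$; hence each smooth component of $\Sigma_{ij}$ lies in an affine hyperplane.

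Finally, the $120^\circ$ meeting condition forces any three flat pieces meeting along a singular stratum to share a common $(n-2)$-dimensional affine subspace, with outward normals mutually at $120^\circ$; a connectedness argument rules out disjoint unions of such configurations, and the prescribed measures $v \in \interior \simplex^{(2)}$ pin down the cluster up to a rigid motion of $\R^n$. The main obstacle is the rigidity step: rigorously propagating the equality case across the codimension-$2$ singular set, where the semigroup-based functional is at best Lipschitz and where three smooth strata collide, is delicate. A cutoff argument exploiting the codimension-$2$ nature of the singular set, combined with a local geometric comparison to the standard tripod corner to exclude exotic singular configurations, should suffice to close the argument.
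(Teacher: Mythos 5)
Your overall instinct---that uniqueness should come from forcing equality in every inequality used to prove Theorem \ref{thm:main1}, and that the endpoint should be constancy of the normals $\n_{ij}$ and flatness of the interfaces---matches the paper. But the mechanism you propose for extracting that rigidity does not exist here: you assume the main theorem is proved by evolving simplex-valued Bobkov-type functions under the Ornstein--Uhlenbeck semigroup and then propagating equality through a Bochner identity. The paper does not prove Theorem \ref{thm:main1} that way (no such functional/semigroup proof of the double-bubble inequality is known; indeed the paper points out that tripods do \emph{not} always maximize Gaussian noise stability, which is exactly the obstruction to a Borell/Mossel--Neeman-style semigroup argument). The actual proof goes through the second variation under translations (Theorem \ref{thm:formula}), the matrix differential inequality $\nabla^2 I \le -L_A^{-1}$ (Theorem \ref{thm:hessian-bound-for-I}), and a maximum principle. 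Since your entire rigidity step is conditioned on a proof of the main theorem that is not available, the argument as written has a genuine gap at its core.

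The rigidity the paper actually exploits is elementary and lives entirely in the translation variations: (i) comparing $\nabla^2 I(v) \le -L_A^{-1}$ with the model identity $\nabla^2 I_m(v) = -L_{A^m}^{-1}$ of Proposition \ref{prop:I_m-equation} and taking traces forces $L_A = L_{A^m}$, hence $A_{ij} = A^m_{ij} > 0$ (Lemma \ref{lem:minimizer}); (ii) equality in the scalar Cauchy--Schwarz step $\int_{\Sigma_{ij}} \inr{w}{\n_{ij}}^2 d\gamma^{n-1} \ge A_{ij} \inr{w}{\navg_{ij}}^2$ for every $w$ forces each $\n_{ij}$ to be constant; (iii) equality in the matrix Cauchy--Schwarz of Lemma \ref{lem:cs} produces a matrix $B$ with $\navg_{ij} = B(e_i - e_j)$, whence $\sum_{(i,j)\in\cyclic} \navg_{ij} = 0$. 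Constant normal plus constant weighted mean curvature then places each $\Sigma_{ij}$ in a hyperplane, and a relative-perimeter/indecomposability argument on the wedges $S_{ab}\cap S_{cd}$ identifies the cells. Note also that your appeal to Taylor-type structure of the singular set and $120^\circ$ meeting angles is something the paper deliberately avoids (Remark \ref{rem:no-higher-regularity}); the final identification of the cluster needs no boundary regularity beyond smoothness of the open interfaces.
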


\subsection{Previously Known and Related Results}

The Gaussian Multi-Bubble Conjecture is known to experts. Presumably, its origins may be traced to an analogous problem of J.~Sullivan from 1995 in the \emph{unweighted} Euclidean setting \cite[Problem 2]{OpenProblemsInSoapBubbles96}, where the conjectured uniquely minimizing cluster (up to null-sets) is a standard $k$-bubble -- spherical caps bounding connected cells $\set{\Omega_i}_{i=1}^{k}$ which are obtained by taking the Voronoi cells of $k$ equidistant points in $\S^{n} \subset \R^{n+1}$, and applying all stereographic projections to $\R^n$.  

To put our results into appropriate context, let us go over some related results in three settings: the unweighted Euclidean setting $\R^n$, on the $n$-sphere $\S^n$ endowed with its canonical Riemannian metric and measure (normalized for convenience to be a probability measure), 
and finally in our Gaussian-weighted Euclidean setting $\G^n$. Further results may be found in F.~Morgan's excellent book \cite[Chapters 13,14,18,19]{MorganBook5Ed}. 

\begin{itemize}
\item The classical isoperimetric inequality in the unweighted Euclidean setting, going back (at least in dimension $n=2$ and perhaps $n=3$) to the ancient Greeks, and first proved rigorously by Schwarz in $\R^n$ (see \cite[Chapter 13.2]{MorganBook5Ed}, \cite[Subsection 10.4]{BuragoZalgallerBook} and the references therein), states that the Euclidean ball is a single-bubble isoperimetric minimizer in $\R^n$. It was shown by DeGiorgi \cite[Theorem 14.3.1]{BuragoZalgallerBook} that up to null-sets, it is uniquely minimizing perimeter. Long believed to be true, but appearing explicitly as a conjecture in an undergraduate thesis by J.~Foisy in 1991 \cite{Foisy-UGThesis}, 
the double-bubble case was considered in the 1990's by various authors \cite{SMALL93, HHS95}, culminating in the work of Hutchings--Morgan--Ritor\'e--Ros \cite{DoubleBubbleInR3}, who proved that up to null-sets, the standard double-bubble is uniquely perimeter minimizing in $\R^3$; this was later extended to $\R^n$ in \cite{SMALL03,Reichardt-DoubleBubbleInRn}. That the standard triple-bubble is uniquely perimeter minimizing in $\R^2$ was proved by Wichiramala in \cite{Wichiramala-TripleBubbleInR2}. 

\item  
On $\S^n$, it was shown by P.~L\'evy \cite{LevyIsopInqOnSphere} and Schmidt \cite{SchmidtIsopOnModelSpaces} that geodesic balls are single-bubble isoperimetric minimizers. Uniqueness up to null-sets was established by DeGiorgi \cite[Theorem 14.3.1]{BuragoZalgallerBook}. As in the unweighted Euclidean setting, the multi-bubble conjecture on $\S^n$ asserts that the uniquely perimeter minimizing $k$-cluster  (up to null-sets) is a standard $k$-bubble: spherical caps bounding connected cells $\set{\Omega_i}_{i=1}^{k}$ and having incidence structure as in the Euclidean and Gaussian cases, already described above. The double-bubble conjecture was resolved on $\S^2$ by Masters \cite{Masters-DoubleBubbleInS2}, but on $\S^n$ for $n\geq 3$ only partial results are known \cite{CottonFreeman-DoubleBubbleInSandH, CorneliHoffmanEtAl-DoubleBubbleIn3D,CorneliCorwinEtAl-DoubleBubbleInSandG}.  
In particular, we mention a result by Corneli-et-al \cite{CorneliCorwinEtAl-DoubleBubbleInSandG}, which confirms the double-bubble conjecture on $\S^n$ for all $n \geq 3$ when the prescribed measure $v \in \simplex^{(2)}$ satisfies $\max_{i} \abs{v_i - 1/3} \leq 0.04$. Their proof employs a result of Cotton--Freeman \cite{CottonFreeman-DoubleBubbleInSandH} stating that if the minimizing cluster's cells are known to be connected, then it must be the standard double-bubble. 
\item We finally arrive to the Gaussian setting $\G^n$ which we consider in this work. As already mentioned, halfplanes are the unique single-bubble isoperimetric minimizers (up to null-sets). The original proofs by Sudakov--Tsirelson and independently Borell both deduced the single-bubble isoperimetric inequality on $\G^n$ from the one on $\S^N$, exploiting the classical fact that the projection onto a fixed $n$-dimensional subspace of the uniform measure on a rescaled sphere $\sqrt{N} \S^N$, converges to the Gaussian measure $\gamma^n$ as $N \rightarrow \infty$. Building upon this idea, it was shown by Corneli-et-al \cite{CorneliCorwinEtAl-DoubleBubbleInSandG} that verification of the double-bubble conjecture on $\S^N$ for a sequence of $N$'s tending to $\infty$ and a fixed $v \in \interior \simplex^{(2)}$ will verify the double-bubble conjecture on $\G^n$ for the same $v$ and for all $n \geq 2$. As a consequence, they confirmed the double-bubble conjecture on $\G^n$ for all $n \geq 2$ when the prescribed measure $v \in \simplex^{(2)}$ satisfies $\max_{i} \abs{v_i - 1/3} \leq 0.04$. Note that this approximation argument precludes any attempts to establish uniqueness in the double-bubble conjecture, and to the best of our knowledge, no uniqueness in the double-bubble conjecture on $\G^n$ was known for any $v \in \interior \simplex^{(2)}$ prior to our Theorem \ref{thm:main-uniqueness}.
\end{itemize}

\medskip

An essential ingredient in the proofs of the double-bubble results in $\R^n$ (and in many results on $\S^n$), is a symmetrization argument due to B.~White (see Foisy \cite{Foisy-UGThesis} and Hutchings \cite{Hutchings-StructureOfDoubleBubbles}). However, it is not clear to us what type of Gaussian symmetrization will produce a tripod cluster (for a symmetrization which works in the single-bubble case, see Ehrhard \cite{EhrhardPhiConcavity}). A second essential ingredient in the above results is Hutchings' theory \cite{Hutchings-StructureOfDoubleBubbles} of bounds on the number of connected components comprising each cell of a minimizing cluster. 

\smallskip
In contrast, we do not use any of these ingredients in our approach. Furthermore, contrary to previous approaches, we do not directly obtain a lower bound on the perimeter of a cluster having prescribed measure $v \in \simplex^{(2)}$, nor do we identify the minimizing clusters by ruling out competitors. Our approach is based on obtaining a matrix-valued partial-differential inequality on the associated isoperimetric profile, concluding the desired lower bound 
in one fell swoop for \emph{all} $v \in \simplex^{(2)}$ by an application of the maximum-principle.

\subsection{Outline of Proof}

Let $I^{(k-1)} : \simplex^{(k-1)} \to \R_+$ denote the Gaussian isoperimetric profile for $k$-clusters, defined as:
\[
  I^{(k-1)}(v) := \inf\{\per(\Omega): \text{$\Omega$ is a $k$-cluster with $\gamma(\Omega) = v$}\}.
\]
Our goal will be to show that $I^{(2)} = I^{(2)}_m$ on $\interior \simplex^{(2)}$, where $I^{(2)}_m : \interior \simplex^{(2)} \to \R_+$ denotes the Gaussian  double-bubble \emph{model} profile:
\[
  I^{(2)}_m(v) := \inf\{\per(\Omega): \text{$\Omega$ is a tripod-cluster with $\gamma(\Omega) = v$}\}.
\]
Note that for any $v \in \interior \simplex^{(2)}$, there exists a tripod-cluster $\Omega^m$ with $\gamma(\Omega^m) = v$. Indeed, by the product structure of the Gaussian measure, it is enough to establish this in the plane, and after fixing the orientation of the tripod, we actually show using a topological argument that there exists a bijection between the vertex of the tripod in $\R^2$ and $\interior \simplex^{(2)}$.

To establish $I^{(2)} = I^{(2)}_m$, let us draw motivation from the single-bubble case. By identifying $\simplex^{(1)}$ with $[0,1]$ using the map $\simplex^{(1)} \ni (v_1,v_2) \mapsto v_1 \in [0,1]$, we will think of the single-bubble profile $I^{(1)}$ as defined on $[0,1]$. The single-bubble Gaussian isoperimetric inequality asserts that $I^{(1)} = I^{(1)}_m$, where $I^{(1)}_m : [0,1] \rightarrow \Real_+$ denotes the single-bubble \emph{model} profile:
\[
I^{(1)}_m(v) := \inf\{\per(U) : \text{$U$ is a halfplane with $\gamma(U) = v$} \} . 
\]
The product structure of the Gaussian measure implies that $I^{(1)}_m$ may be calculated in dimension one, readily yielding:
\[
I^{(1)}_m(v) = \varphi \circ \Phi^{-1}(v),
\]
where $\varphi(x) = (2\pi)^{-1/2} e^{-x^2/2}$ is the one-dimensional Gaussian density, and $\Phi(x) = \int_{-\infty}^x \varphi(y)\, dy$ is its cumulative distribution function. It is well-known and immediate to check that $I^{(1)}_m$ satisfies the following ODE:
\begin{equation} \label{eq:intro-ODE}
(I^{(1)}_m)^{\prime\prime} = -\frac{1}{I^{(1)}_m} \text{ on $[0,1]$.} 
\end{equation}

Our starting observation is that $I^{(2)}_m$ satisfies a similar \emph{matrix-valued} differential equality on $\simplex^{(2)}$. Let $E$ denote the tangent space to $\simplex^{(2)}$, which we identify with $\{x \in \R^3: \sum_{i=1}^3 x_i = 0\}$. Given $A = (A_{12},A_{23},A_{13})$ with $A_{ij} \geq 0$, we introduce the following $3 \times 3$ positive semi-definite matrix:
\[
L_{A} := \sum_{1 \leq i < j \leq 3} A_{ij} (e_i - e_j) (e_i-e_j)^T = 
\brac{\begin{matrix}
    A_{12} + A_{13} & -A_{12} & - A_{13} \\
    -A_{12}                & A_{12} + A_{23} & -A_{23} \\
    -A_{13}                & -A_{23}  &  A_{13} + A_{23} 
\end{matrix}} \geq 0 .
\]
In fact, as a quadratic form on $E$, it is easy to see that $L_A$ is strictly positive-definite as soon as at least two $A_{ij}$'s are positive.

Given $v \in \interior \simplex^{(2)}$, let $A^m_{ij}(v) := \gamma^{n-1}(\partial \Omega^m_i \cap \partial \Omega^m_j) > 0$ denote the weighted areas of the interfaces of a tripod cluster $\Omega^m$ satisfying $\gamma(\Omega^m) = v$. A calculation then verifies that:
\begin{equation} \label{eq:intro-MDE}
\nabla^2 I^{(2)}_m(v) = -L_{A^m(v)}^{-1} \text{ on $\interior \simplex^{(2)}$,}
\end{equation}
where differentiation and inversion are both carried out on $E$. This constitutes the right extension of (\ref{eq:intro-ODE}) to the double-bubble setting. 

\medskip

To establish that $I^{(2)} = I^{(2)}_m$ on $\interior \simplex^{(2)}$, our idea is as follows. First, it is not hard to show that $I^{(2)}_m$ may be extended continuously to the entire $\simplex^{(2)}$ by setting $I^{(2)}_m(v) := I^{(1)}_m(\max_i v_i)$ for $v \in \partial \simplex^{(2)}$. We clearly have $I^{(2)} \leq I^{(2)}_m$ on $\simplex^{(2)}$, with equality on the boundary by the single-bubble Gaussian isoperimetric inequality (where $I^{(2)}(v) = I^{(1)}(\max_i v_i) = I^{(1)}_m(\max_i v_i)$). 

\smallskip

Assume for the sake of this sketch that $I^{(2)}$ is twice continuously differentiable on $\interior \simplex^{(2)}$. Given an isoperimetric minimizing cluster $\Omega$ with $\gamma(\Omega) = v \in \interior \simplex^{(2)}$, let $A_{ij}(v) := \gamma^{n-1}(\partial^*\Omega_i \cap \partial^* \Omega_j)$, $i < j$, denote the weighted areas of the cluster's interfaces, where $\partial^* U$ denotes the reduced boundary of a Borel set $U$ having finite perimeter (see Section \ref{sec:prelim}). Assume again for simplicity that $A_{ij}(v)$ are well-defined, that is, depend only on $v$. It is easy to see that at least two $A_{ij}(v)$ must be positive, and hence $L_{A(v)}$ is positive-definite on $E$. We will then show that the following matrix-valued differential inequality holds:
\begin{equation} \label{eq:intro-MDI}
\nabla^2 I^{(2)}(v) \leq -L_{A(v)}^{-1} \text{ on $\interior \simplex^{(2)}$,}
\end{equation}
in the positive semi-definite sense on $E$. Consequently:
\begin{equation} \label{eq:intro-PDE}
- \tr[ (\nabla^2 I^{(2)}(v))^{-1} ] \leq \tr(L_{A(v)}) = 2 \sum_{i<j} A_{ij}(v) = 2 I^{(2)}(v)  \text{ on $\interior \simplex^{(2)}$.}
\end{equation}
On the other hand, by (\ref{eq:intro-MDE}), we have equality above when $I^{(2)}$ and $A$ are replaced by $I^{(2)}_m$ and $A^m$, respectively. Since $I^{(2)} \leq I^{(2)}_m$ on $\simplex^{(2)}$ with equality on the boundary, an application of the maximum principle for the (fully non-linear) second-order elliptic PDE (\ref{eq:intro-PDE}) yields the desired $I^{(2)} = I^{(2)}_m$. 

\smallskip

The bulk of this work is thus aimed at establishing a rigorous version of (\ref{eq:intro-MDI}). To this end, we consider an isoperimetric minimizing cluster $\Omega$, and perturb it using a flow $F_t$ along a vector-field $X$. Since:
\[
 I^{(2)}(\gamma(F_t(\Omega))) \le \per(F_t(\Omega)) 
\]
with equality at $t=0$, we deduce (at least, conceptually) that the first variations must coincide and that the second variations must obey the inequality. 
Such an idea in the single-bubble case is not new, and was notably used by Sternberg--Zumbrun in \cite{SternbergZumbrun} to establish concavity of the isoperimetric profile for a convex domain in the unweighted Euclidean setting; see also \cite{MorganJohnson,Kuwert, RosIsoperimetryInCrystals, BayleRosales} for further extensions and applications, \cite{KleinerProofOfCartanHadamardIn3D} for a first-order comparison theorem on Cartan-Hadamard manifolds, and \cite{BayleThesis} where the resulting second-order ordinary differential inequality was integrated to recover the Gromov--L\'evy isoperimetric inequality \cite{GromovGeneralizationOfLevy}, \cite[Appendix C]{Gromov}. 
On a technical level, it is crucial for us to apply this to a \emph{non-compactly supported} vector-field $X$, and so we spend some time to revisit classical results regarding the first and second variations of (weighted) volume and perimeter. Contrary to the single-bubble setting, the interfaces $\Sigma_{ij} = \partial^*\Omega_i \cap \partial^* \Omega_j$ will meet each other at common boundaries, which may contribute to these variations. Fortunately, for the first variation of perimeter, these contributions cancel out thanks to the isoperimetric stationarity of the cluster (without requiring us to use delicate regularity results regarding the structure of the \emph{boundary} of interfaces -- see Remark \ref{rem:no-higher-regularity}). For the second variation, we are not so fortunate, and in general the boundary of the interfaces will have a contribution.

\smallskip

However, in simplifying our original argument for establishing (\ref{eq:intro-MDI}), we obtain an interesting dichotomy: if the minimizing cluster is effectively one-dimensional (a case we do not a-priori rule out), it is easy to obtain (\ref{eq:intro-MDI}) by a one-dimensional computation directly; otherwise, it turns out we obtain enough information from applying the above machinery to the constant vector-field $X \equiv w$ in all directions $w \in \R^n$, amounting to translation of the minimizing cluster. This simplifies considerably the resulting formulas for the second variation of (weighted) volume and perimeter, and allows us again to bypass the delicate regularity results mentioned before. Consequently, we only need to use the classical results from Geometric Measure Theory on the existence of isoperimetric minimizing clusters and regularity of their interfaces, due to Almgren \cite{AlmgrenMemoirs} (see also Maggi's excellent book \cite{MaggiBook}).

\smallskip

To establish the uniqueness of minimizers, we observe that all of our inequalities in the derivation of (\ref{eq:intro-MDI}) must have been equalities, and this already provides enough information for characterizing tripod-clusters. 

\medskip

The rest of this work is organized as follows. In Section \ref{sec:model}, we construct the model tripod-clusters and associated model isoperimetric profile, and establish their properties.  
In Section \ref{sec:prelim} we recall relevant definitions and provide some preliminaries for the ensuing calculations. In Section \ref{sec:first-order} we deduce first-order properties of isoperimetric minimizing clusters (such as stationarity) in the weighted unbounded setting, as well as second-order stability. In Section \ref{sec:second-order}, we calculate the second variations of measure and weighted-perimeter under translations. In Section \ref{sec:MDI} we obtain a rigorous version of the matrix-valued differential inequality (\ref{eq:intro-MDI}) in both cases of the above-mentioned dichotomy. In Section \ref{sec:proof}, we conclude the proof of Theorem \ref{thm:main1} by employing a maximum-principle. In Section \ref{sec:uniqueness}, we establish Theorem \ref{thm:main-uniqueness} on the uniqueness of the isoperimetric minimizing clusters. Finally, in Section \ref{sec:conclude}, we provide some concluding remarks on extensions and additional results which will appear in \cite{EMilmanNeeman-GaussianMultiBubbleConj}. For brevity of notation, we omit the superscript $^{(2)}$ in all subsequent references to $I^{(2)}$, $I^{(2)}_m$ and $\simplex^{(2)}$ in this work. In addition, we use  $\cyclic = \{(1,2),(2,3),(3,1) \}$  to denote the set of positively oriented pairs in $\{1, 2, 3\}$.

\medskip

\noindent 
\textbf{Acknowledgement.} We thank Francesco Maggi for pointing us towards a simpler version of the maximum principle than our original one,
Frank Morgan for many helpful references, and Brian White for informing us of the reference to the recent \cite{CES-RegularityOfMinimalSurfacesNearCones}. We also acknowledge the hospitality of MSRI where part of this work was conducted.

\section{Model Tripod Configurations} \label{sec:model}

In this section, we construct the model tripod clusters which are conjectured to be optimal on $\R^n$. It will be enough to construct them on $\R^2$, since by taking Cartesian product with $\Real^{n-2}$ and employing the product structure of the Gaussian measure, these clusters extend to $\Real^n$ for all $n \geq 2$.
Actually, it will be convenient to construct them on $E$, rather than on $\R^2$, 
where recall that $E$ denotes the tangent space to $\simplex^{(2)}$, which we identify with $\{x \in \R^3: \sum_{i=1}^3 x_i = 0\}$.
Consequently, in this section, let $\gamma = \gamma^2$ denote the standard two-dimensional Gaussian measure on $E$, and if $\Psi$ denotes its (smooth, Gaussian) density on $E$, 
set $\gamma^1 := \Psi \H^1$. 

\medskip

Define $\Omega^m_i = \interior \{x \in E: \max_j x_j = x_i\}$ (``$m$'' stands for ``model'').
For any $x \in E$, $x + \Omega^m = (x + \Omega^m_1, x + \Omega^m_2, x + \Omega^m_3)$ is a cluster, which we call a ``tripod"  cluster (also referred to as a ``Y" or ``rotor" in the literature). 
Let $\Sigma^m_{ij} := \partial \Omega^m_i \cap \partial \Omega^m_j$ denote the interfaces of $\Omega^m$. 
Observe that $x + \Sigma^m_{ij}$, the interfaces of $x + \Omega^m$, are flat, and meet at $120^\circ$ angles at the single point $x$. We denote:
\[
A^m_{ij}(x) := \gamma^1(x + \Sigma^m_{ij}) .
\]

\medskip

We begin with some preparatory lemmas:

\begin{lemma} \label{lem:model-sigma}
For all distinct $i,j,k \in \{ 1,2,3\}$, let $n_{ij} = (e_j - e_i)/\sqrt 2$ and $t_{ij} =  (e_i + e_j - 2 e_k) / \sqrt 6$. Then $n_{ij}$ and $t_{ij}$ form an orthonormal basis of $E$, and:
\begin{equation}   \label{eq:tripod-areas}
A^m_{ij}(x) = \varphi(\inr{x}{n_{ij}}) (1 - \Phi(\inr{x}{t_{ij}}))
\end{equation}
\end{lemma}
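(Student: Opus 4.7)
The plan is to carry out a direct computation, since the lemma is an explicit formula for a one-dimensional Gaussian integral over a ray in the plane $E$.

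First, I would verify the orthonormality claim. Both $n_{ij}$ and $t_{ij}$ sum (in coordinates) to zero, so they lie in $E$. A quick arithmetic check gives $|n_{ij}|^2 = 2/2 = 1$, $|t_{ij}|^2 = 6/6 = 1$, and $\langle n_{ij}, t_{ij} \rangle = \langle e_j - e_i, e_i + e_j - 2 e_k \rangle/\sqrt{12} = (1-1)/\sqrt{12} = 0$, so $(n_{ij}, t_{ij})$ is indeed an orthonormal basis of the 2-dimensional space $E$.

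Next, I would identify $\Sigma^m_{ij}$ geometrically. By definition, the interface between $\Omega^m_i$ and $\Omega^m_j$ is $\{x \in E : x_i = x_j \ge x_k\}$, where $k$ is the remaining index. The condition $x_i = x_j$ is precisely $\langle x, e_j - e_i \rangle = 0$, i.e.\ $\langle x, n_{ij} \rangle = 0$; combined with this, the condition $x_i \ge x_k$ is equivalent to $x_i + x_j - 2 x_k \ge 0$, i.e.\ $\langle x, t_{ij} \rangle \ge 0$. Hence $\Sigma^m_{ij} = \{s\, t_{ij} : s \ge 0\}$, a ray in $E$ starting at the origin in direction $t_{ij}$.

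Finally, I would compute the weighted length of the translated ray $x + \Sigma^m_{ij}$. Parametrizing $y = x + s\, t_{ij}$ with $s \ge 0$, and expanding $x$ in the orthonormal basis $(n_{ij}, t_{ij})$, I get $|y|^2 = \langle x, n_{ij}\rangle^2 + (\langle x, t_{ij}\rangle + s)^2$. Since the Gaussian density on the 2-dimensional space $E$ factors as $\Psi(y) = (2\pi)^{-1} e^{-|y|^2/2} = \varphi(\langle x, n_{ij}\rangle)\,\varphi(\langle x, t_{ij}\rangle + s)$, integrating in $s$ and substituting $u = \langle x, t_{ij}\rangle + s$ yields
\[
A^m_{ij}(x) = \varphi(\langle x, n_{ij}\rangle)\int_{\langle x, t_{ij}\rangle}^{\infty}\varphi(u)\,du = \varphi(\langle x, n_{ij}\rangle)\,(1 - \Phi(\langle x, t_{ij}\rangle)),
\]
as claimed. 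There is really no obstacle here: the only thing to watch is the correct identification of the half-line $\Sigma^m_{ij}$ inside $E$ (the sign of the inequality defining the relevant half-space in the $t_{ij}$-direction), and the factorization of the 2D Gaussian density along the orthogonal decomposition, both of which are routine.
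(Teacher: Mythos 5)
Your proposal is correct and follows essentially the same route as the paper: identify $\Sigma^m_{ij}$ as the ray $\{s\,t_{ij} : s \ge 0\}$, use the orthonormal decomposition of $E$ along $(n_{ij}, t_{ij})$ to factor $|x + s\,t_{ij}|^2$, and split the two-dimensional Gaussian density accordingly before integrating in $s$. The paper merely asserts the parametrization and orthonormality as "immediate," whereas you supply the arithmetic; nothing is missing.
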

\begin{proof}
It is immediate to check that $\Sigma^m_{ij}$ can be parametrized (with unit speed) as $\{a t_{ij}: a \ge 0\}$, and that $t_{ij}$ and $n_{ij}$ form an orthonormal basis of $E$. 
Consequently, for any $a \in \R$, $|a t_{ij} + x|^2 = (a + \inr{x}{t_{ij}})^2 + \inr{x}{n_{ij}}^2$. Hence,
  \begin{align*}
    \gamma^1(x + \Sigma_{ij})
    &= \frac{1}{2\pi} \int_0^\infty e^{-|a t_{ij} + x|^2/2} \, da \notag \\
    &= \varphi(\inr{x}{n_{ij}}) \int_0^\infty \varphi(a + \inr{x}{t_{ij}})\, da \notag \\
    &= \varphi(\inr{x}{n_{ij}}) (1 - \Phi(\inr{x}{t_{ij}})).
  \end{align*}
\end{proof}

\begin{lemma} \label{lem:model-upper-lower}
For all $x \in E$ and distinct $i,j,k \in \{ 1,2,3\}$:
\begin{align}
\label{eq:model-upper} \gamma(x + \Omega^m_i) &\leq \min\brac{1 - \Phi(x_i) , 1 - \Phi\brac{\frac{x_i-x_j}{\sqrt{2}}} , 1 - \Phi\brac{\frac{x_i-x_k}{\sqrt{2}}}} ~,\\
\label{eq:model-lower} \gamma(x + \Omega^m_i) & \geq \brac{1 - \Phi\brac{\frac{x_i-x_j}{\sqrt{2}}}}\brac{ 1 - \Phi\brac{\frac{x_i-x_k}{\sqrt{2}}} } ~.
\end{align}
\end{lemma}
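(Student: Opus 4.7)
My plan is to write $x+\Omega^m_i$ as the intersection of two explicit half-planes in $E$, and then extract both the upper bounds in \eqref{eq:model-upper} and the lower bound \eqref{eq:model-lower} from Gaussian marginal identities combined with a positive-correlation (Slepian) inequality. Unpacking the definition, $y \in x+\Omega^m_i$ if and only if $y_i - y_\ell \geq x_i - x_\ell$ for both $\ell \in \set{j,k}$, so setting $H_{i\ell} := \set{y \in E : y_i - y_\ell \geq x_i - x_\ell}$, I have $x+\Omega^m_i = H_{ij} \cap H_{ik}$. Since $\Var(y_i-y_\ell)=2$ under $\gamma$, the functional $(y_i-y_\ell)/\sqrt{2}$ is a standard normal, and hence $\gamma(H_{i\ell}) = 1-\Phi((x_i-x_\ell)/\sqrt 2)$.

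The two ``pairwise'' upper bounds in \eqref{eq:model-upper} are immediate from the containments $x+\Omega^m_i \subset H_{i\ell}$. For the remaining term $1-\Phi(x_i)$, I sum the two defining inequalities and use $\sum_m y_m = \sum_m x_m = 0$ (valid because $y,x\in E$) to obtain $3y_i \geq 3x_i$; hence $x+\Omega^m_i \subset \set{y \in E : y_i \geq x_i}$, and I bound by the $\gamma$-measure of this halfplane. For the lower bound \eqref{eq:model-lower}, a short computation gives $\mathrm{Cov}(y_i-y_j,\,y_i-y_k) = 1 > 0$ under $\gamma$, so Slepian's inequality for two positively correlated half-spaces yields $\gamma(H_{ij}\cap H_{ik}) \geq \gamma(H_{ij})\gamma(H_{ik})$, which is precisely the product claimed in \eqref{eq:model-lower}.

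The step I expect to require the most care is recovering the precise ``$1-\Phi(x_i)$'' form of the first upper bound: since $y_i \sim N(0,\tfrac 2 3)$ on $E$, the halfplane $\set{y_i \geq x_i}$ actually has $\gamma$-measure $1-\Phi(x_i\sqrt{3/2})$, which dominates $1-\Phi(x_i)$ exactly when $x_i \geq 0$. For $x_i < 0$, I anticipate arguing via the complementary identity $\gamma(x+\Omega^m_i) = 1 - \gamma(x+\Omega^m_j) - \gamma(x+\Omega^m_k)$ together with \eqref{eq:model-lower} applied to the other two cells, or else via a direct 1D integration using the $(s,t)$-parametrization of $E$ with the orthonormal basis $(n_{jk},t_{jk})$ introduced in Lemma~\ref{lem:model-sigma}.
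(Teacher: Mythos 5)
Your decomposition $x+\Omega^m_i = H_{ij}\cap H_{ik}$, the two pairwise upper bounds, and the lower bound via the positive correlation of $y_i-y_j$ and $y_i-y_k$ are exactly the paper's argument (the paper invokes the Gaussian FKG inequality of Pitt, which for two half-spaces is the same statement as your Slepian bound; your covariance computation $\inr{e_i-e_j}{e_i-e_k}=1$ is correct). On the term $1-\Phi(x_i)$ you are in fact \emph{more} careful than the paper: the paper's proof asserts that $Z_i$ is standard Gaussian under $\gamma$ on $E$, which is not so --- as you note, $\mathrm{Var}(Z_i)=|e_i-\tfrac13(1,1,1)|^2=2/3$ --- so the half-plane $\set{y\in E: y_i\ge x_i}$ has measure $1-\Phi\brac{\sqrt{3/2}\,x_i}$, which is $\le 1-\Phi(x_i)$ only when $x_i\ge 0$. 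Your argument therefore establishes the first term of \eqref{eq:model-upper} for $x_i\ge 0$, which is all that is ever used (it is invoked only with $x_i=R>0$ in the surjectivity step of Lemma \ref{lem:V-diffeo}; recall $\max_i x_i\ge 0$ on $E$, so the relevant boundary segments indeed have $x_i=R>0$).

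However, neither of your proposed repairs for $x_i<0$ can succeed, because the inequality $\gamma(x+\Omega^m_i)\le 1-\Phi(x_i)$ is genuinely false for $x_i$ sufficiently negative. Take $x=(-6,3,3)$ and $i=1$: the pairwise bounds applied to the other two cells give $1-\gamma(x+\Omega^m_1)=\gamma(x+\Omega^m_2)+\gamma(x+\Omega^m_3)\le 2\brac{1-\Phi(9/\sqrt 2)}\approx 2\times 10^{-10}$, whereas the claimed bound would force $1-\gamma(x+\Omega^m_1)\ge\Phi(-6)\approx 10^{-9}$. So the correct form of the first term is ``$1-\Phi(x_i)$ for $x_i\ge 0$'' (or, valid for all $x_i$, the bound $1-\Phi(\sqrt{3/2}\,x_i)$), and your proof of that is complete; do not spend time on the complementary-identity or $(s,t)$-integration fallbacks. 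One small wording slip: you say $1-\Phi(x_i\sqrt{3/2})$ ``dominates'' $1-\Phi(x_i)$ exactly when $x_i\ge 0$, but the inequality you need, and the one that holds for $x_i\ge 0$, is that it is \emph{dominated by} $1-\Phi(x_i)$.
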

\begin{proof}
For the upper bound, note that:
\[
   x + \Omega^m_i = \interior \{z \in E : \max_j (z_j - x_j) = z_i - x_i  \} \subseteq \{z \in E : z_i - x_i > 0\},
\]
where the last inclusion follows since $y = z-x\in E$ implies that $\max_j y_j \ge \frac{1}{3} \sum_{j=1}^3 y_j = 0$. Similarly, for any $j \neq i$:
\[
  x + \Omega^m_i = \interior \{z \in E : \max_j (z_j - x_j) = z_i - x_i  \} \subseteq \{ z \in E : z_i - z_j  > x_i - x_j \} .
\]
It remains to note that if $Z$ is distributed according to $\gamma$ on $E$, then $Z_i$ and $(Z_i - Z_j) / \sqrt{2}$ are distributed according to the standard Gaussian measure on $\Real$, and (\ref{eq:model-upper}) follows. 

For the lower bound,  note that:
 \[
    x + \Omega^m_i = \{z: z_i - z_j > x_i - x_j\} \cap \{z: z_i - z_k > x_i - x_k \}.
 \]
  By the Gaussian FKG inequality~\cite{Pitt:82},
  \[
    \gamma(x + \Omega^m_i) \ge \gamma(\{z: z_i - z_j > x_i - x_j\}) \gamma(\{z: z_i - z_k > x_i - x_k\}) ,
   \]
   and (\ref{eq:model-lower}) is established. 
\end{proof}

\begin{lemma} \label{lem:V-diffeo}
 The map:
 \[
 E \ni x \mapsto \modelV(x) := \gamma(x + \Omega^m) \in \interior \simplex 
 \]
 is a diffeomorphism between $E$ and $\interior \simplex$. Its differential is given by:
 \begin{equation}\label{eq:DV}
      D \modelV = -\frac{1}{\sqrt 2}
      \brac{\begin{matrix}
            A^m_{12} + A^m_{31} & -A^m_{12} & -A^m_{31} \\
            -A^m_{12} & A^m_{12} + A^m_{23} & -A^m_{23} \\
            -A^m_{31} & -A^m_{23} & A^m_{23} + A^m_{31}
        \end{matrix}}
        = - \frac{1}{\sqrt 2} L_{A^m} .
  \end{equation}
\end{lemma}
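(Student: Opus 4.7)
The plan is to derive \eqref{eq:DV} by a Gaussian-divergence-theorem computation, which will simultaneously show that $D\modelV$ is invertible at every $x \in E$, and then to upgrade local invertibility to a global diffeomorphism $E \to \interior \simplex$ using the bounds of Lemma~\ref{lem:model-upper-lower}.

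First, to compute the derivative at $x \in E$ in a constant direction $w \in E$, I would substitute $z = y + tw$ to make the integration domain independent of $t$:
\[
\modelV_i(x+tw) = \int_{x + \Omega^m_i} \Psi(y+tw)\, dy.
\]
Differentiating under the integral at $t=0$ and using $\nabla \Psi = -\Psi\,\nabla \pot$ together with $\div w \equiv 0$ gives
\[
\diffat{}{t}{t=0} \modelV_i(x+tw) = \int_{x+\Omega^m_i} \brac{\div(w) - \inr{\nabla \pot}{w}}\, d\gamma.
\]
The Gaussian divergence theorem, whose application is justified by the exponential decay of $\Psi$ on the unbounded domain $x + \Omega^m_i$, then rewrites this as $\sum_{j \ne i} A^m_{ij}(x) \inr{w}{n_{ij}}$, where $n_{ij} = (e_j-e_i)/\sqrt{2}$ is the outer unit normal of $x + \Omega^m_i$ along $x + \Sigma^m_{ij}$ (Lemma~\ref{lem:model-sigma}). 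Expanding $\inr{w}{n_{ij}} = (w_j-w_i)/\sqrt{2}$ and collecting terms yields precisely the $i$th row of $-\tfrac{1}{\sqrt{2}} L_{A^m(x)}$ applied to $w$, establishing \eqref{eq:DV}. Since $A^m_{ij}(x) > 0$ for all $i<j$ by \eqref{eq:tripod-areas}, $L_{A^m(x)}$ is strictly positive definite on $E$, so $D\modelV$ is a linear isomorphism $E \to T\simplex$ at every $x$.

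Next I would upgrade this to a global statement. The inverse function theorem already gives that $\modelV$ is a local diffeomorphism, hence an open map. Injectivity follows from strict monotonicity: for any $x \ne y$ in $E$,
\[
\inr{\modelV(y)-\modelV(x)}{y-x} = \int_0^1 \inr{D\modelV(x+t(y-x))(y-x)}{y-x}\, dt < 0,
\]
because $D\modelV = -\tfrac{1}{\sqrt 2} L_{A^m}$ is strictly negative definite on $E$. The lower bound \eqref{eq:model-lower} implies $\modelV(E) \subseteq \interior \simplex$; the upper bound \eqref{eq:model-upper}, combined with the observation that $\sum_i x_i = 0$ forces $\max_i x_i \to \infty$ whenever $\abs{x} \to \infty$, implies $\min_i \modelV_i(x) \to 0$ as $\abs{x} \to \infty$. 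Hence $\modelV$ is proper as a map into $\interior \simplex$, so $\modelV(E)$ is closed as well as open in the connected set $\interior \simplex$, and therefore equals it.

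The main technical point will be the divergence-theorem computation: one must justify differentiation under the integral and the Gaussian integration by parts on the unbounded region $x + \Omega^m_i$, but both follow routinely from the exponential decay of the Gaussian density.
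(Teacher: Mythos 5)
Your proposal is correct, and your derivative computation is essentially the paper's: translate the domain so that $t$ enters only through the integrand, differentiate under the integral, integrate by parts, and read off the constant outward normals $(e_j-e_i)/\sqrt 2$ on the interfaces to obtain \eqref{eq:DV}. Where you genuinely diverge is in how you globalize. For injectivity the paper uses a purely geometric containment: if $y\ne x$ then $y\in x+\Omega^m_i$ for some $i$, whence $y+\Omega^m_i\subsetneq x+\Omega^m_i$ and $\modelV_i(y)<\modelV_i(x)$; you instead integrate the strictly negative definite differential along the segment from $x$ to $y$ to get $\inr{\modelV(y)-\modelV(x)}{y-x}<0$. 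Both are valid; the paper's argument needs nothing about $D\modelV$, while yours is the standard monotone-map argument and presupposes \eqref{eq:DV}. For surjectivity the paper runs a winding/degree argument on the boundary of a large triangle $K_R$ together with Invariance of Domain, again independent of the differential; you instead establish that $\modelV$ is proper as a map into $\interior\simplex$ (using \eqref{eq:model-upper} and the fact that on $E$ the condition $\sum_i x_i=0$ forces $\max_i x_i$ to grow like $\abs{x}$), so that $\modelV(E)$ is both open and closed in the connected set $\interior\simplex$ --- essentially Hadamard's global inverse function theorem. Your route is arguably cleaner once \eqref{eq:DV} is in hand; the paper's has the feature that injectivity and surjectivity are obtained before, and independently of, the computation of $D\modelV$.
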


\begin{proof}
    Clearly $\modelV(x)$ is $C^\infty$,
    since the Gaussian density is $C^\infty$ and all of its derivatives vanish rapidly
    at infinity.
    
    To see that $\modelV$ is injective, simply note that if $y \neq x$, then there exists $i \in \{1,2,3\}$ such that $y \in x + \Omega^m_i$, and 
    therefore $y + \Omega^m_i \subsetneq x + \Omega^m_i$ and hence $\gamma(y + \Omega^m_i) < \gamma(x + \Omega^m_i)$. 
   
    To show that $\modelV$ is surjective, fix $R$ and consider the open triangle $K_R = \{x \in E:
    \max_i x_i < R\}$.  The boundary of $K_R$ is made up of three line
    segments, each of the form
    $K_{R,i} := \{x \in E : x_i = R, x_j \le R, x_k \le R\}$. By (\ref{eq:model-upper}), we know that for any $x \in K_{R,i}$ we have $\gamma(x + \Omega_i) \le 1 - \Phi(R)$, i.e. that $\modelV_i(x) \le 1 - \Phi(R)$ on $K_{R,i}$.
    It follows that for any $v \in \simplex_R := \set{ u \in \simplex : \min_i u_i > 1 - \Phi(R)}$,
    as $x$ runs clockwise along $\partial K_R$, then $\modelV(x)$ runs clockwise around $v$.
    By Invariance of Domain Theorem \cite{Munkres-TopologyBook2ndEd}, $\modelV$ is a homeomorphism between the open triangle $K_R$ and $\modelV(K_R)$, and since $\modelV(\partial K_R) \cap \simplex_R = \emptyset$, it follows that $\modelV(K_R)$ must contain $\simplex_R$. 
        Since every $v \in \interior \simplex$ satisfies $v \in \simplex_R$ for some $R < \infty$, the surjectivity follows.
    
    Finally, we compute at $x \in E$:
    \[
      \nabla_v \modelV_i(x) = \int_{x + \Omega^m_i} \nabla_v e^{-\pot(x)} \, dx
      = \int_{x + \partial \Omega^m_i} \inr{v}{\n} e^{-\pot(x)}\, d\calH^{n-1},
    \]
    where $\n$ denotes the outward unit normal to $x + \partial \Omega^m_i$.      But note that
    $\partial \Omega^m_i = \bigcup_{j \ne i} \Sigma_{ij}$, and that the outward
    unit normal is the constant vector-field $(e_j - e_i)/\sqrt{2}$ on $\Sigma_{ij}$.
    Consequently:
    \[
      \nabla_v \modelV_i(x) = \frac{1}{\sqrt 2} \sum_{j \ne i} \inr{v}{e_j - e_i} A^m_{ij}(x) ,
    \]
    and (\ref{eq:DV}) is established. Since each of the $A^m_{ij}(x)$ is strictly positive, $L_{A^m(x)}$ is non-singular (in fact, strictly positive-definite) as a quadratic form on $E$, and hence $D\modelV(x)$ is non-singular as well. It follows that $\modelV$ is indeed a diffeomorphism, concluding the proof. 
\end{proof}

We can now give the following:
\begin{definition}
The model isoperimetric double-bubble profile $I_m : \interior \simplex \to \R$ is defined as:
\[
I_m(v) := \per(x + \Omega^m) = A^m_{12}(x) + A^m_{23}(x) + A^m_{31}(x)  \;\;\; \text{such that} \;\;\; \gamma(x + \Omega^m) = v . 
\]
\end{definition}

Lemma \ref{lem:V-diffeo} verifies that the above is well-defined. 
Thanks to the next lemma, we may (and do) extend $I_m$ by continuity to the entire $\simplex$. 

\begin{lemma}\label{lem:tripod-profile-continuous}
  $I_m$ is $C^\infty$ on $\interior \simplex$, and continuous up to $\partial
  \simplex$.  Moreover, if $v^{h} \in \interior \simplex$ converge
  to $v \in \partial \simplex$ then $I_m(v^{h}) \to \isop(\max_i v_i) =: I_m(v)$.
\end{lemma}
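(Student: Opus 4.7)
\medskip
\noindent\textbf{Proof plan.} Smoothness on $\interior \simplex$ is immediate: by Lemma~\ref{lem:V-diffeo}, the map $\modelV : E \to \interior \simplex$ is a $C^\infty$-diffeomorphism, each interface area $A^m_{ij}$ is $C^\infty$ on $E$ by the closed-form expression (\ref{eq:tripod-areas}), and so $I_m = (A^m_{12} + A^m_{23} + A^m_{31}) \circ \modelV^{-1}$ is $C^\infty$ on $\interior \simplex$.

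For the continuous extension, fix a sequence $v^h \to v \in \partial \simplex$ and set $x^h := \modelV^{-1}(v^h) \in E$. Since $\modelV$ is a homeomorphism onto $\interior \simplex$, no subsequence of $x^h$ can converge in $E$ (any such limit would map to $v \in \partial \simplex$, contradicting $\modelV(E) \subseteq \interior \simplex$), so $\abs{x^h} \to \infty$. The heart of the argument is to pin down the escape direction of $x^h$ via Lemma~\ref{lem:model-upper-lower}, and then substitute the resulting asymptotics into the closed form (\ref{eq:tripod-areas}).

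After relabeling so that $v_1 = \max_i v_i$, two cases arise: (a) exactly one $v_i$ vanishes, say $v_3 = 0$ with $v_1, v_2 > 0$; or (b) two $v_i$ vanish, $v_2 = v_3 = 0$ and $v_1 = 1$. The upper bound (\ref{eq:model-upper}) gives $x^h_i \leq \Phi^{-1}(1 - v^h_i)$ whenever $v_i > 0$, so each such coordinate is eventually bounded above; combined with $\sum_i x^h_i = 0$ and $\abs{x^h} \to \infty$, this forces $x^h_3 \to +\infty$ with $x^h_1, x^h_2$ bounded above in case (a), and $x^h_2, x^h_3 \to +\infty$ with $x^h_1 \to -\infty$ in case (b). In case (a), applying the lower bound (\ref{eq:model-lower}) for cell $1$ (whose factor $1 - \Phi((x^h_1 - x^h_3)/\sqrt 2)$ tends to $1$ since $x^h_3 \to +\infty$) and sandwiching with (\ref{eq:model-upper}) pins down $(x^h_2 - x^h_1)/\sqrt 2 \to \Phi^{-1}(v_1)$, using $\Phi^{-1}(1-p) = -\Phi^{-1}(p)$.

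Finally, substitute into (\ref{eq:tripod-areas}) using the identity $\inr{x}{t_{ij}} = -\sqrt{3/2}\, x_k$ valid for $\{i,j,k\} = \{1,2,3\}$ and $x \in E$. In case (a), $A^m_{12}(x^h) = \varphi((x^h_2 - x^h_1)/\sqrt 2)\, \Phi(\sqrt{3/2}\, x^h_3) \to \varphi(\Phi^{-1}(v_1)) \cdot 1 = \isop(\max_i v_i)$, while $A^m_{13}(x^h)$ and $A^m_{23}(x^h)$ tend to $0$ since their $\varphi$-factors have arguments $(x^h_3 - x^h_i)/\sqrt 2 \to +\infty$ for $i = 1, 2$. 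In case (b), all three $\varphi$-factors have arguments tending to $\pm \infty$, so each area tends to $0 = \isop(1)$. Summing in both cases yields $I_m(v^h) \to \isop(\max_i v_i)$, justifying the continuous extension. The only real obstacle is the case analysis identifying the escape direction of $x^h$; once those asymptotics are in hand, the substitution into (\ref{eq:tripod-areas}) is mechanical.
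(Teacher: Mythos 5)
Your overall route is the same as the paper's: show $|x^h|=|\modelV^{-1}(v^h)|\to\infty$, use the bounds of Lemma~\ref{lem:model-upper-lower} to pin down the asymptotic direction of $x^h$ in two cases (corner vs.\ edge of $\simplex$), and substitute into~\eqref{eq:tripod-areas}. The smoothness claim and your case (a) are correct (your sandwich uses one upper and one lower bound where the paper uses two upper bounds, and you get the divergence of $x_3^h$ directly from the coordinate bounds rather than from the FKG lower bound for cell $3$ --- both are fine).

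Case (b) has a genuine gap as written. From $v_1^h\to 1$ you may conclude $x_1^h\to-\infty$ and, via the pairwise upper bounds, $(x_2^h-x_1^h)/\sqrt2\to+\infty$ and $(x_3^h-x_1^h)/\sqrt2\to+\infty$; but neither ``$x_2^h,x_3^h\to+\infty$'' nor ``all three $\varphi$-factors have arguments tending to $\pm\infty$'' follows. For instance $x^h=(-h,h,0)$ satisfies $\modelV(x^h)\to(1,0,0)$ yet $x_3^h\equiv 0$, and $x^h=(-2h,h,h)$ satisfies $\modelV(x^h)\to(1,0,0)$ yet the $\varphi$-factor of $A^m_{23}$ has argument $\inr{x^h}{n_{23}}=(x_3^h-x_2^h)/\sqrt2\equiv 0$, so that factor stays at $\varphi(0)>0$. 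Thus your stated justification fails precisely for the interface $\Sigma_{23}$ opposite the surviving cell. The repair is one line and is what the paper does: for that interface use the \emph{second} factor of~\eqref{eq:tripod-areas}, namely $1-\Phi(\inr{x^h}{t_{23}})=\Phi\bigl(\sqrt{3/2}\,x_1^h\bigr)\to 0$ since $x_1^h\to-\infty$ (equivalently, $t_{23}=\tfrac{1}{\sqrt3}(n_{12}+n_{13})$ and both inner products diverge to $+\infty$). With that substitution your argument is complete and matches the paper's.
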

\begin{proof}
Since $I_m(v) = \per(\modelV^{-1}(v) + \Omega^m)$ and both $\modelV^{-1}$ and the map $x \mapsto \per(x +
  \Omega^m)$ are $C^\infty$ on their respective domains, it follows that $I_m$ is $C^\infty$ on $\interior \simplex$. 

  Now suppose that $\interior \simplex \ni v^{h} \to v \in \partial \simplex$ and let $x^{h} =
  \modelV^{-1}(v^{h})$. We first consider the case that $v$ is a corner of the
  simplex, in which case $v = (1, 0, 0)$ without loss of generality.
  Since $\gamma(x^{h} + \Omega_1^m) \to 1$, we see from~\eqref{eq:model-upper}
  that $x_1^{h} - x_2^{h} \to -\infty$ and $x_1^{h} - x_3^{h} \to -\infty$.
  In particular, $\inr{x^{h}}{n_{12}}$, $\inr{x^{h}}{n_{13}} \to \infty$, and since $t_{23} = \frac{1}{\sqrt{3}} (n_{12} + n_{13})$, 
  it follows that $\inr{x^{h}}{t_{23}} \to \infty$.
  It follows from~\eqref{eq:tripod-areas} that $\gamma^1(x^{h} + \Sigma_{ij}) \to 0$
  for every $i \ne j$, and so $I_m(v^{h}) \to 0 = \isop(1)$.

  Finally, consider the case that $v \in \partial \simplex$ but is not a corner. Without loss of generality, $v = (a, 1-a, 0)$
  for some $a \in (0, 1)$. Then $\gamma(x^{h} + \Omega_1^m) \to a$ and
  $\gamma(x^{h} + \Omega_2^m) \to 1 - a$; it follows from~\eqref{eq:model-upper}
  that
  \[
    \limsup_{n \to \infty} \frac{x_1^{h} - x_2^{h}}{\sqrt 2} \le \Phi^{-1}(1 - a) ~,~ 
    \limsup_{n \to \infty} \frac{x_2^{h} - x_1^{h}}{\sqrt 2} \le \Phi^{-1}(a).
  \]
  Since $\Phi^{-1}(1 - a) = -\Phi^{-1}(a)$, and recalling the notation of Lemma \ref{lem:model-sigma}, we conclude that
  \begin{equation}\label{eq:12-boundary}
  \inr{x^{h}}{n_{12}} =  \frac{x_2^{h} - x_1^{h}}{\sqrt 2} \to \Phi^{-1}(a).
  \end{equation}
 Now by (\ref{eq:model-lower}):
  \[
    \gamma(x^h + \Omega_m^3) \ge (1 - \Phi(\inr{x^h}{n_{23}})) (1 - \Phi(\inr{x^h}{n_{13}})) ,
  \]
  and since $\gamma(x^{(n)} + \Omega_m^3) \to 0$, at least one of
  $\inr{x^{h}}{n_{23}}$ or $\inr{x^{h}}{n_{13}}$ must diverge to $\infty$.
  But since $\inr{x^{h}}{n_{12}}$ converges to a finite limit by~\eqref{eq:12-boundary}, 
  and since $n_{12} + n_{23} + n_{31} = 0$, it follows that both
  $\inr{x^{h}}{n_{23}}$ and $\inr{x^{h}}{n_{13}}$ must diverge to $\infty$.
  By~\eqref{eq:tripod-areas}, we conclude that $\gamma^1(x^{h} + \Sigma_{23}),\gamma^1(x^{h} + \Sigma_{13}) \to 0$. 
  In addition, as $t_{12} = \frac{1}{\sqrt{3}} (n_{31} + n_{32})$, it follows that $\inr{x^h}{t_{12}} \to -\infty$. Together with~\eqref{eq:12-boundary}, the representation~\eqref{eq:tripod-areas} implies that $\gamma^1(x^{h} + \Sigma_{12}) \rightarrow \varphi(\Phi^{-1}(a)) = \isop(a)$. We thus confirm that $I_m(v^{h}) \to \isop(a) = \isop(1-a) = \isop(\max_j v_j)$, as asserted. 
\end{proof}

We have constructed the model double-bubble clusters in $E \simeq \R^2$ for $v \in \interior \simplex$ (tripod clusters), and on $\Real$ for $v \in \partial \simplex$ (halfline clusters). Clearly, by taking Cartesian products with $\Real^{n-2}$ and $\Real^{n-1}$, respectively, and employing the product structure of the Gaussian measure, these clusters extend to $\Real^n$ for all $n \geq 2$. Consequently, $I(v) \le I_m(v)$ for all $v \in \simplex$, and our goal in this work is to establish the converse inequality. 

\medskip
To this end, we observe that $I_m$ satisfies a remarkable differential equation: 
\begin{proposition}\label{prop:I_m-equation}
    At any point in $\interior \simplex$:
    \[
        \nabla I_m = \frac{1}{\sqrt 2} \modelV^{-1} \text{ and } \nabla^2 I_m = - (L_{A^m \circ \modelV^{-1}})^{-1}
    \]
    as tensors on $E$.
\end{proposition}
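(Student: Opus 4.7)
The plan is to compute directly the gradient of $P(x) := \per(x+\Omega^m) = \sum_{(i,j)\in\cyclic} A^m_{ij}(x)$ using the explicit formula of Lemma \ref{lem:model-sigma}, and then derive both assertions from the identity $I_m = P\circ\modelV^{-1}$ via the chain rule, invoking Lemma \ref{lem:V-diffeo} to compute $(D\modelV)^{-1}$. The heart of the argument is to establish the identity
\[
\nabla P(x) = -\tfrac{1}{2}\, L_{A^m(x)}\, x \quad \text{on } E,
\]
from which both gradient and Hessian formulas drop out by mechanical application of the chain rule.

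To prove this identity, I would differentiate $A^m_{ij}(x)=\varphi(\inr{x}{n_{ij}})(1-\Phi(\inr{x}{t_{ij}}))$ using $\varphi'(s) = -s\varphi(s)$, obtaining
\[
\nabla A^m_{ij}(x) = -\inr{x}{n_{ij}} A^m_{ij}(x)\, n_{ij} \;-\; \varphi(\inr{x}{n_{ij}})\varphi(\inr{x}{t_{ij}})\, t_{ij}.
\]
Two crucial observations then collapse this into the desired form. First, because $\{n_{ij},t_{ij}\}$ is an orthonormal basis of $E$ (Lemma \ref{lem:model-sigma}), the product $\varphi(\inr{x}{n_{ij}})\varphi(\inr{x}{t_{ij}}) = \frac{1}{2\pi} e^{-|x|^2/2}$ is the Gaussian density $\Psi(x)$, \emph{independent of $(i,j)$}. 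Second, a one-line check from the definitions shows $t_{12}+t_{23}+t_{31}=0$. Summing $\nabla A^m_{ij}$ over $(i,j)\in\cyclic$ therefore annihilates the tangential contributions and leaves
\[
\nabla P(x) = -\sum_{(i,j)\in\cyclic}\inr{x}{n_{ij}} A^m_{ij}(x)\, n_{ij}.
\]
Since $(e_i-e_j)(e_i-e_j)^T = 2\, n_{ij} n_{ij}^T$ and the cyclic pairs index the same three interfaces as $\{i<j\}$, I recognize $L_{A^m(x)} = 2\sum_{(i,j)\in\cyclic} A^m_{ij}(x)\, n_{ij} n_{ij}^T$ as a quadratic form on $E$, which is exactly $-\tfrac{1}{2} L_{A^m(x)} x$.

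Setting $x=\modelV^{-1}(v)$ and applying the chain rule to $I_m = P\circ\modelV^{-1}$, together with Lemma \ref{lem:V-diffeo} which gives the symmetric, invertible (on $E$) linear map $D\modelV(x) = -\tfrac{1}{\sqrt 2} L_{A^m(x)}$, yields
\[
\nabla I_m(v) = (D\modelV(x))^{-1}\nabla P(x) = \bigl(-\sqrt 2\, L_{A^m(x)}^{-1}\bigr)\bigl(-\tfrac{1}{2}\, L_{A^m(x)} x\bigr) = \tfrac{1}{\sqrt 2}\, \modelV^{-1}(v).
\]
Differentiating this identity once more in $v$ gives $\nabla^2 I_m(v) = \tfrac{1}{\sqrt 2}\, D(\modelV^{-1})(v) = \tfrac{1}{\sqrt 2}\, (D\modelV(\modelV^{-1}(v)))^{-1} = -L_{A^m\circ\modelV^{-1}(v)}^{-1}$, as asserted. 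There is no genuine obstacle here; the only delicate point is bookkeeping, namely to carry out all differentiations and inversions \emph{as tensors on $E$} (where $L_{A^m}$ is strictly positive-definite since all three $A^m_{ij}$ are positive), rather than on $\R^3$ where $L_{A^m}$ is only positive semi-definite.
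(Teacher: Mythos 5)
Your proposal is correct and follows essentially the same route as the paper: differentiate the explicit formula for $A^m_{ij}$, use $\sum_{(i,j)\in\cyclic} t_{ij}=0$ together with the fact that the tangential coefficients all equal the planar Gaussian density to obtain $\nabla P(x)=-\tfrac12 L_{A^m(x)}x$, and then apply the chain rule with $(D\modelV)^{-1}=-\sqrt2\,L_{A^m}^{-1}$ for both the gradient and the Hessian. The only (welcome) additions are your explicit remarks on the symmetry of $D\modelV$ and on carrying out the inversions on $E$.
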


\begin{proof}

    Let $n_{ij} = (e_j - e_i)/\sqrt 2$ and $t_{ij} = (e_i + e_j - 2e_k)/\sqrt 6$ as in Lemma~\ref{lem:model-sigma}.
    Differentiating~\eqref{eq:tripod-areas},
    and recalling that the one-dimensional Gaussian density $\varphi$ satisfies $\varphi'(x) = -x \varphi(x)$, we calculate:
    \begin{align*}
        \nabla A^m_{ij}(x)
        &= - n_{ij} \inr{x}{n_{ij}} A^m_{ij}(x) - t_{ij} \varphi(\inr{x}{n_{ij}}) \varphi(\inr{x}{t_{ij}}) \\
        &= - n_{ij} \inr{x}{n_{ij}} A^m_{ij}(x) - t_{ij} \varphi_2(x),
    \end{align*}
    where $\varphi_2$ denotes the standard Gaussian density on $E$. Since $\sum_{(i, j) \in \cyclic} t_{ij} = 0$, 
    \[
        \nabla \sum_{(i, j) \in \cyclic} A^m_{ij}(x) = - \sum_{(i, j) \in \cyclic} A^m_{ij}(x) n_{ij} n_{ij}^T  x
        = -\frac 12 L_{A^m(x)} x.
    \]
            Since $I_m(v) = \sum_{(i, j) \in \cyclic} A^m_{ij}(\modelV^{-1}(v))$, the chain rule yields
    \[
        \nabla I_m(v) = -\frac 12  L_{A^m(\modelV^{-1}(v))} (D \modelV)^{-1}(\modelV^{-1}(v)) \modelV^{-1}(v).
    \]
    But according to~\eqref{eq:DV}, $(D\modelV)^{-1} = -\sqrt 2 L_{A^m}^{-1}$, and the first claim follows.
    The second claim follows by differentiating the first one, writing $D (\modelV^{-1}) = (D \modelV)^{-1} \circ V^{-1}$ and applying once again $(D\modelV)^{-1} = -\sqrt 2 L_{A^m}^{-1}$. 
\end{proof}

\section{Definitions and Technical Preliminaries} \label{sec:prelim}

We will be working in Euclidean space $(\Real^n,\abs{\cdot})$ endowed with a measure $\gamma = \gamma^n$ having $C^\infty$-smooth and strictly-positive density $e^{-\pot}$ with respect to Lebesgue measure.
We develop here the preliminaries we require for clusters $\Omega = (\Omega_1,\ldots,\Omega_k)$ with $k$ cells, for general $k \geq 3$, as this poses no greater generality over the case $k=3$. Recall that the cells $\Omega_i$ are assumed to be Borel, pairwise disjoint, and satisfy $\gamma(\R^n \setminus \cup_{i=1}^k \Omega_i) = 0$. In addition, they are assumed to have finite $\gamma$-weighted perimeter $P_\gamma(\Omega_i) < \infty$, to be defined below. 
We denote by $\Trips$ the collection of all cyclically ordered triplets:
\[
 \Trips := \{ \{ (a,b) , (b,c) , (c,a) \} : 1 \leq a < b < c \leq k\} .
 \]

\medskip

We write $\div X$ to denote divergence of a smooth vector-field $X$, and $\div_\gamma X$ to denote its weighted divergence:
\[
\div_{\gamma} X := \div X - \nabla_X \pot . 
\]
Given a unit vector $\n$, we write $\div_{\n^{\perp}} X$ to denote $\div X - \inr{\n}{\nabla_\n X}$, and set its weighted counterpart to be:
\[
\div_{\n^{\perp},\gamma} X = \div_{\n^{\perp}} X - \nabla_X \pot.
\]
For a smooth hypersurface $\Sigma \subset \Real^n$ co-oriented by a unit normal vector-field
$\n$, let $H_\Sigma: \Sigma \to \R$ denote its mean-curvature, i.e. the sum of its principal curvatures (equivalently, the trace of its second fundamental form). Its weighted mean-curvature $H_{\Sigma,\gamma}$ is defined as:
\[
H_{\Sigma,\gamma} := H_{\Sigma} - \nabla_\n \pot .
\]
We write $\div_\Sigma X$ for the surface divergence of a vector-field $X$ defined on $\Sigma$, i.e. $\sum_{i=1}^{n-1} \scalar{\tau_i,\nabla_{\tau_i} X}$ where $\tau_i$ is a local orthonormal frame on $\Sigma$; this coincides with $\div_{\n^{\perp}} X$ for any smooth extension of $X$ to a neighborhood of $\Sigma$. 
The weighted surface divergence $\div_{\Sigma,\gamma}$ is defined as:
\[
\div_{\Sigma,\gamma} X = \div_{\Sigma} X - \nabla_X \pot.
\]
Note that $\div_{\Sigma} \n = H_{\Sigma}$ and $\div_{\Sigma,\gamma} \n = H_{\Sigma,\gamma}$.  We will also abbreviate $\inr{X}{\n}$ by $X^\n$, and we will write $X^\tang$ for the tangential part of $X$, i.e. $X - X^{\n} \n$.  

\medskip

Given a Borel set $U \subset \Real^n$ with locally-finite perimeter, its reduced boundary $\partial^* U$ is defined (see e.g. \cite[Chapter 15]{MaggiBook}) as the subset of $\partial U$ for which there is a uniquely defined outer unit normal vector to $U$ in a measure theoretic sense. While the precise definition will not play a role in this work, we provide it for completeness. The set $U$ is said to have locally-finite (unweighted) perimeter, if for any compact subset $K \subset \Real^n$ we have:
\[
\sup \left\{\int_{U} \div X \; dx : X \in C_c^\infty(\R^n; T \R^n) \; ,\; \text{supp}(X) \subset K \; , \; |X| \le 1 \right\} < \infty . 
\]
With any Borel set with locally-finite perimeter one may associate a vector-valued Radon measure $\mu_U$ on $\Real^n$ so that:
\[
\int_U \div X \; dx = \int_{\Real^n} \inr{X}{d\mu_U} \;\;\; \forall X \in C_c^\infty(\R^n; T \R^n) .
\]
The reduced boundary $\partial^* U$ of a set $U$ with locally-finite perimeter is defined as the collection of $x \in \text{supp } \mu_U$ so that the vector limit:
\[
\n_U := \lim_{\eps \to 0+} \frac{\mu_U(B(x,\eps))}{\abs{\mu_U}(B(x,\eps))}  
\]
exists and has length $1$ (here $\abs{\mu_U}$ denotes the total-variation of $\mu_U$ and $B(x,\eps)$ is the open Euclidean ball of radius $\eps$ centered at $x$). When the context is clear, we will abbreviate $\n_U$ by $\n$. It is known that $\partial^* U$ is a Borel subset of $\partial U$ and that $\abs{\mu_U}(\R^n \setminus \partial^* U) = 0$. The relative perimeter of $U$ in a Borel set $F$, denoted $P(U ; F)$, is defined as $\abs{\mu_U}(F)$. 

Recall that the $\gamma$-weighted perimeter of $U$ was defined in the Introduction as:
\[
\per(U) := \sup \left\{\int_U \div_\gamma X \, d\gamma: X \in C_c^\infty(\R^n; T \R^n), |X| \le 1 \right\}.
\]
Clearly, if $U$ has finite weighted-perimeter $\per(U) < \infty$, it has locally-finite (unweighted) perimeter. It is known \cite[Theorem 15.9]{MaggiBook} that in that case:
\[
\per(U) = \gamma^{n-1}(\partial^* U)  ,
\]
where as usual:
\[
\gamma^{n-1} := e^{-W} \H^{n-1} . 
\]
In addition, by the Gauss--Green--De Giorgi theorem, the following integration by parts formula holds for any $C_c^1$ vector-field $X$ on $\R^n$, and Borel subset $U \subset \Real^n$ with $\per(U) < \infty$ (the proof in \cite[Theorem 15.9]{MaggiBook} immediately carries over to the weighted setting):
\begin{equation}\label{eq:integration-by-parts}
  \int_U \div_\gamma X \, d\gamma^n = \int_{\partial^* U} X^\n \, d\gamma^{n-1} .
\end{equation}

\subsection{Volume and Perimeter Regular Sets}

\begin{definition}[Admissible Vector-Fields]
A vector-field $X$ on $\R^n$ is called \emph{admissible} if it is $C^\infty$-smooth and satisfies:
\begin{equation} \label{eq:field-bdd}
\forall i \geq 0 \;\;\; \max_{x \in \R^n} \norm{\nabla^i X(x)} \leq C_i < \infty .
\end{equation}
\end{definition}
\noindent
Any smooth vector-field which is compactly-supported is clearly admissible, but we will need to use more general vector-fields in this work. 

Let $F_t$ denote the associated flow along an admissible vector-field $X$, defined as the family of maps $\set{F_t : \R^n \to \R^n}$ solving
the following ODE:
\[
\frac{d}{dt} F_t(x) = X \circ F_t(x) ~,~ F_0(x) = x .
\]
It is well-known that a unique smooth solution in $t \in \Real$ exists for all $x \in \R^n$, and that the resulting maps $F_t : \R^n \rightarrow \R^n$ are $C^\infty$ diffeomorphisms, so that the partial derivatives in $t$ and $x$ of any fixed order are uniformly bounded in $(x,t) \in \Real^n \times [-T,T]$, for any fixed $T > 0$.

Note that if $\Omega=(\Omega_1,\ldots,\Omega_k)$ is a cluster then so is $F_t(\Omega) = (F_t(\Omega_1),\ldots,F_t(\Omega_k))$, since its cells remain Borel, pairwise disjoint, as well as of $\gamma$-weighted finite-perimeter and satisfying $\gamma(\Real^n \setminus \cup_i F_t(\Omega_i)) = \gamma(F_t(\Real^n \setminus \cup_i \Omega_i)) = 0$ since $F_t$ is a Lipschitz map.  
We define the $r$-th variations of weighted volume and perimeter of $\Omega$ as:
\begin{align*}
  \delta_X^r V(\Omega) &:= \left. \brac{\frac{d}{dt}}^r\right|_{t=0} \gamma(F_t(\Omega)) ,\\
  \delta_X^r A(\Omega) &:= \left. \brac{\frac{d}{dt}}^r \right|_{t=0} \per(F_t(\Omega)) ,
\end{align*}
whenever the right-hand sides exist. When $\Omega$ is clear from the context, we will simply write $\delta_X^r V$ and $\delta_X^r A$; when $r = 1$, we will write $\delta_X V$ and $\delta_X A$. 

\smallskip

It will be of crucial importance for us in this work to calculate the first and second variations of weighted volume and perimeter for \textbf{non-}compactly supported (albeit simple) vector-fields, for which even the existence of $\delta_X^r V(\Omega)$ and especially $\delta_X^r A(\Omega)$ is not immediately clear.
Indeed, even for the case of the standard Gaussian measure, the derivatives of its density are asymptotically larger at infinity than the Gaussian density itself.
We consequently 
introduce the following:

\begin{definition}[Volume / Perimeter Regular Set]
    A Borel set $U$ is said to be \emph{volume regular} with respect to the measure $\gamma$ (which, recall, has density $e^{-W}$ with respect to Lebesgue) if:
\[
\forall i,j \geq 0 \;\;\; \exists \delta > 0 \;\; \int_{U} \sup_{z \in B(x,\delta)} \norm{\nabla^i \pot(z)}^j e^{-\pot(z)} dx < \infty ~,~ 
\]
It is said to be \emph{perimeter regular} with respect to the measure $\gamma$ if:
\[
\forall i,j \geq 0 \;\;\; \exists \delta > 0 \;\; \int_{\partial^* U} \sup_{z \in B(x,\delta)} \norm{\nabla^i \pot(z)}^j e^{-\pot(z)} d\H^{n-1}(x) < \infty .
\]
If $\delta > 0$ above may be chosen uniformly for all $i,j \geq 0$, $U$ is called \emph{uniformly} volume / perimeter regular. 
\end{definition}
\noindent
Here $\norm{\cdot}$ denotes the Hilbert-Schmidt norm of a tensor, defined as the square-root of the sum of squares of its coordinates in any local orthonormal frame. 
Note that volume (perimeter) regular sets clearly have finite weighted volume (perimeter).

\medskip
Write $JF_t = \text{det}(dF_t)$ for the Jacobian of $F_t$, and observe that by the change-of-variables formula for smooth injective functions:
\begin{equation} \label{eq:Jac-vol}
\gamma(F_t(U)) = \int_{U} J F_t  e^{-\pot \circ F_t} \, dx,
\end{equation}
for any Borel set $U$. Similarly, if $U$ is in addition of locally finite-perimeter, let $\Phi_t = F_t|_{\partial^* U}$ and write $J \Phi_t = \text{det}((d_{n_U^{\perp}} F_t)^T d_{n_U^{\perp}} F_t)^{1/2}$ for the Jacobian of $\Phi_t$ on $\partial^* U$. Since $\partial^* U$ is locally $\H^{n-1}$-rectifiable, \cite[Theorem 11.6]{MaggiBook} implies:
\begin{equation} \label{eq:Jac-area}
\gamma^{n-1}(F_t(\partial^* U)) = \int_{\partial^* U} J \Phi_t e^{-\pot \circ F_t} \, d\H^{n-1} . 
\end{equation}

\begin{lemma} \label{lem:regular}
\hfill
\begin{enumerate}
\item
If $U$ is volume regular with respect to $\gamma$ then for any $r \geq 1$, $t \mapsto \gamma(F_t(U))$ is $C^r$ in an open neighborhood of $t=0$, and:
\[
\delta_X^r V(U) = \int_{U} \frac{d^r}{(dt)^r} (J F_t e^{-\pot \circ F_t}) dx .
\]
Furthermore, if $U$ is uniformly volume regular then there exists an open neighborhood of $t=0$ where $t \mapsto \gamma(F_t(U))$ is $C^\infty$. 
\item
If $U$ is perimeter regular with respect to $\gamma$ then for any $r \geq 1$, $t \mapsto \per(F_t(U))$ is $C^r$ in an open neighborhood of $t=0$, and:
\[
\delta_X^r A(U) = \int_{\partial^* U} \frac{d^r}{(dt)^r} (J \Phi_t e^{-\pot \circ F_t}) dx .
\]
Furthermore, if $U$ is uniformly perimeter regular then there exists an open neighborhood of $t=0$ where $t \mapsto \per(F_t(U))$ is $C^\infty$.
\end{enumerate}
\end{lemma}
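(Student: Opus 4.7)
The plan is to differentiate under the integral sign in the change-of-variables formulas~\eqref{eq:Jac-vol} and~\eqref{eq:Jac-area}, and the two parts admit parallel proofs. Since $X$ is admissible, the flow ODE $\dot F_t = X\circ F_t$ has smooth coefficients with globally bounded spatial derivatives, so standard ODE theory yields uniform bounds on $F_t$ and on all its mixed $x$- and $t$-derivatives on each slab $\R^n\times[-T,T]$. In particular, the volume Jacobian $JF_t$ and the surface Jacobian $J\Phi_t$ (well defined $\H^{n-1}$-a.e.\ on $\partial^* U$ by rectifiability of the reduced boundary) and all of their $t$-derivatives of order $\leq r$ are bounded by constants depending only on $r$, $T$, and the $C_i$ from admissibility.

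Next, fix $r\ge 1$ and let $\delta>0$ be the constant supplied by the (uniform) volume regularity, chosen to work simultaneously for all the pairs $(i,j)$ with $i,j\leq r$; in the uniform case $\delta$ can be taken independent of $r$. Since $|X|\le C_0$, we have $|F_t(x)-x|\le C_0|t|$, so $F_t(x)\in B(x,\delta)$ for every $x\in\R^n$ whenever $|t|\le t_0 := \delta/C_0$. By the Fa\`a di Bruno formula (or iterated application of the chain and Leibniz rules),
\[
  \frac{d^r}{dt^r}\brac{JF_t\, e^{-\pot\circ F_t}}
\]
is a universal polynomial in $t$-derivatives of $JF_t$, in the quantities $(\nabla^i X)\circ F_t$ for $i\le r$, and in the quantities $(\nabla^i \pot)\circ F_t$ for $i\le r$, multiplied by the factor $e^{-\pot(F_t(x))}$. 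The first two groups are uniformly bounded on $\R^n\times[-t_0,t_0]$ by the preceding paragraph and by admissibility, while for $|t|\le t_0$ the remaining factor is bounded pointwise by
\[
  C\,\sum_{i,j\le r}\,\sup_{z\in B(x,\delta)}\norm{\nabla^i \pot(z)}^j\,e^{-\pot(z)},
\]
which is integrable over $U$ by the definition of volume regularity.

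The Lebesgue dominated convergence theorem then permits interchanging $\partial_t^r$ with $\int_U(\,\cdot\,)\,dx$ on $(-t_0,t_0)$, proving the $C^r$-regularity of $t\mapsto\gamma(F_t(U))$ together with the asserted formula for $\delta_X^r V(U)$; when $U$ is \emph{uniformly} volume regular, the single $\delta$ dominates every order simultaneously, giving the $C^\infty$ conclusion on a single interval. The perimeter statement is proved identically from~\eqref{eq:Jac-area}: $J\Phi_t$ and its $t$-derivatives obey the same uniform bounds, the Fa\`a di Bruno expansion has the same structure, and perimeter regularity provides the dominating function on $\partial^* U$. The main obstacle is conceptually simple but real: for measures like the Gaussian, the derivatives of $\pot$ grow faster at infinity than $e^{-\pot}$ decays, so a pointwise bound of the form $\norm{\nabla^i\pot(F_t(x))}^j e^{-\pot(F_t(x))}\le C\,e^{-\pot(x)}$ fails; the $B(x,\delta)$-supremum built into the definition of (uniform) regularity is precisely the device that replaces such a pointwise bound and yields a $t$-uniform integrable majorant.
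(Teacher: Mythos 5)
Your proposal is correct and follows essentially the same route as the paper: differentiate under the integral in~\eqref{eq:Jac-vol} and~\eqref{eq:Jac-area}, bound the Jacobian factors and $(\nabla^b X)\circ F_t$ uniformly via admissibility and the flow ODE, and dominate the remaining $(\nabla^a\pot)\circ F_t\cdot e^{-\pot\circ F_t}$ terms by the $B(x,\delta)$-supremum in the definition of (uniform) regularity after shrinking $|t|$ so that $|F_t(x)-x|\le\delta$. The only cosmetic difference is that the paper phrases the justification via difference quotients, Taylor's theorem with Lagrange remainder, and induction on $r$ before invoking dominated convergence, which is the same argument you compress into ``interchange $\partial_t^r$ with the integral.''
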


For the proof of the second part, we require a simple:
\begin{lemma} \label{lem:F-reduced}
For any Borel set $U \subset \R^n$ with $\per(U) < \infty$ and diffeomorphism $F : \R^n \rightarrow \R^n$:
\[
\gamma^{n-1}(\partial^* F(U)) = \gamma^{n-1}(F(\partial^* U)) . 
\]
\end{lemma}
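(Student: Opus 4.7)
The plan is to reduce the statement to a purely $\H^{n-1}$-theoretic fact, and then use that $\gamma^{n-1} = e^{-W}\H^{n-1}$ is mutually absolutely continuous with $\H^{n-1}$ so that $\H^{n-1}$-null sets are $\gamma^{n-1}$-null sets. Since $W$ is $C^\infty$ and everywhere finite, the density $e^{-W}$ is strictly positive and locally bounded, so the classes of null sets coincide. It therefore suffices to prove
\[
\H^{n-1}\bigl( F(\partial^* U)\, \triangle\, \partial^* F(U)\bigr) = 0,
\]
and in fact even the slightly stronger statement that $F(\partial^* U)$ and $\partial^* F(U)$ agree up to an $\H^{n-1}$-null set.

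The first key step is to note that since $F$ is a $C^\infty$ diffeomorphism, $F(U)$ has locally finite perimeter (one may verify this directly from the definition using the change of variables formula applied to the pullback vector-field $(JF)(DF)^{-1}(X\circ F)$, which transfers admissible test vector-fields for $F(U)$ to admissible test vector-fields for $U$). The second and main step is the standard fact from geometric measure theory (cf.\ Maggi \cite{MaggiBook}, in the chapter on sets of finite perimeter, particularly the discussion of the reduced boundary under $C^1$ changes of variables) that for a $C^1$ diffeomorphism $F$, one has $\partial^* F(U) = F(\partial^* U)$ modulo an $\H^{n-1}$-null set. This is most cleanly seen by combining two ingredients: (i) $F$ maps the topological boundary of $U$ to the topological boundary of $F(U)$ (it is a homeomorphism); and (ii) by Federer's structure theorem, $\partial^* U$ differs from $\partial U$ by an $\H^{n-1}$-null set once $U$ has finite perimeter, and likewise for $F(U)$, and the $(n-1)$-rectifiable structure of $\partial^* U$ is preserved by the Lipschitz map $F|_{\partial^*U}$ via the area formula.

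Putting these together, $F(\partial^*U)$ and $\partial^* F(U)$ differ only by an $\H^{n-1}$-null set, hence by a $\gamma^{n-1}$-null set, and so the two weighted Hausdorff measures coincide. The main obstacle is really a bookkeeping one: pinning down the precise reference for the identity $\partial^* F(U) = F(\partial^* U)$ up to $\H^{n-1}$-null sets. If a direct reference is unavailable, one can derive it in-line by using the characterization of the reduced boundary through blow-up half-spaces and observing that if $x \in \partial^* U$ has blow-up half-space with unit normal $\n$, then $F(x) \in \partial^* F(U)$ with blow-up half-space having normal proportional to $(DF(x))^{-T}\n$; the exceptional set where the blow-ups fail to exist is $\H^{n-1}$-null on both sides, and the behavior is symmetric upon replacing $F$ by $F^{-1}$.
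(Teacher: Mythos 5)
Your proposal is correct and takes essentially the same route as the paper: the paper's proof is a one-line appeal to \cite[Proposition 17.1]{MaggiBook} (which gives precisely that $F(U)$ has locally finite perimeter with $\partial^* F(U) = F(\partial^* U)$ up to an $\H^{n-1}$-null set) together with the absolute continuity of $\gamma^{n-1}$ with respect to $\H^{n-1}$, which is exactly your reduction. One caveat on your ``most cleanly seen by'' aside: the claim that $\partial^* U$ differs from the \emph{topological} boundary $\partial U$ by an $\H^{n-1}$-null set for a set of finite perimeter is false (Federer's theorem compares $\partial^* U$ with the \emph{essential} boundary; the topological boundary of a finite-perimeter set can even have positive Lebesgue measure), so that particular derivation does not work as stated. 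Your alternative blow-up argument, on the other hand, is sound and is the standard way to prove the identity, so the overall proof stands.
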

\begin{proof}
    This follows from~\cite[Proposition 17.1]{MaggiBook}, along with the fact
    that $\gamma^{n-1}$ is absolutely continuous with respect to $\calH^{n-1}$.
\end{proof}

\begin{proof}[Proof of Lemma \ref{lem:regular}]
In view of Lemma \ref{lem:F-reduced}, our task is to justify taking derivative inside the integral representations (\ref{eq:Jac-vol}) and (\ref{eq:Jac-area}). Taking difference quotients, applying Taylor's theorem with Lagrange remainder and induction on $r$, it is enough to establish by Dominant Convergence Theorem that for some $\eps > 0$:
\begin{align} 
\label{eq:dominant1} & \int_U \sup_{t \in [-\eps,\eps]} \frac{d^r}{(dt)^r} (J F_t(x) e^{-\pot(F_t(x))}) dx < \infty ~,\\
\label{eq:dominant2} & \int_{\partial^* U} \sup_{t \in [-\eps,\eps]} \frac{d^r}{(dt)^r} (J \Phi_t(x) e^{-\pot(F_t(x))}) d\H^{n-1}(x) < \infty .
\end{align}
By the Leibniz product rule, for $\F = F,\Phi$:
\[
\frac{d^r}{(dt)^r} (J \F_t(x) e^{-\pot(F_t(x))}) = \sum_{p+q=r} {r \choose p} \frac{d^p}{(dt)^p} J \F_t(x) \frac{d^{q}}{(dt)^{q}}  e^{-\pot(F_t(x))} .
\]
For each $x$, $t \mapsto J \F_t(x)$ is a smooth function of the differential $d F_t(x)$ (note that for $\F = \Phi$, the normal $n_{U}(x)$ remains fixed for all $t$).
This differential satisfies:
\[
\frac{d}{dt} dF_t(x) = \nabla X(F_t(x))  dF_t(x) , 
\]
and since $X$ satisfies (\ref{eq:field-bdd}), it follows that $\sup_{t \in [-\eps,\eps]} \frac{d^p}{(dt)^p} J \F_t(x)$ is uniformly bounded in $x \in \Real^n$ for all fixed $\eps > 0$ and $p$. 

It remains to handle the $\frac{d^{q}}{(dt)^{q}}  e^{-\pot(F_t(x))}$ term. Repeated differentiation and application of the chain rule results in a polynomial expression in $\nabla^a \pot$ and $\nabla^b X$ time $e^{-\pot}$ evaluated at $F_t(x)$, where the degree of the polynomial, $a$ and $b$ are bounded by a function of $q$. Since $X$ satisfies (\ref{eq:field-bdd}), we may bound the magnitudes of $\nabla^b X$ by a constant depending on $q$. Now let $\delta > 0$ be given by the assumption that $U$ is volume and perimeter regular with respect to $\gamma$, for appropriately bounded above $i,j$. Since $\abs{F_t(x) - x} \leq \abs{t} \max_{x \in \R^n} \abs{X(x)}$, we may find $\eps > 0$ so that $\abs{F_t(x) - x} \leq \delta$ uniformly in $x \in \R^n$. It follows that for an appropriate constant $D_r >0$:
\[
\sup_{t \in [-\eps,\eps]} \frac{d^r}{(dt)^r} (J \F_t(x) e^{-\pot(F_t(x))}) \leq D_r \sup_{z \in B(x,\delta)} \norm{\nabla^i \pot(z)}^j e^{-\pot(z)} ,
\]
and so the required (\ref{eq:dominant1}) and (\ref{eq:dominant2}) are established by definition of a regular set. 

Finally, when the set is assumed to be uniformly regular, $\delta > 0$ and hence $\eps  > 0$ above may be chosen uniformly in $r \geq 1$, and $C^\infty$ smoothness is established for $t \in (-\eps,\eps)$. 
\end{proof}

\subsection{Cutoff Function}

The following lemma will be very useful in this work for calculating first and second variations of non-compactly-supported vector-fields. In several of these instances the vector-field will not be bounded in magnitude, and so we formulate it quite generally. 

\begin{definition}[Cutoff function $\eta_R$]
Given $R > 0$, we denote by $\eta_R : \R^n \rightarrow [0,1]$ a smooth compactly-supported cutoff function on $\R^n$ with $\eta_R(x) = 1$ for $\abs{x} \leq R$ and $\abs{\nabla \eta_R} \leq 1$.
\end{definition}

\noindent
We also denote by $B_R$ the open Euclidean ball of radius $R>0$ centered at the origin. 

\begin{lemma} \label{lem:cutoff}
\hfill
\begin{enumerate}
\item
Let $X$ denote a $C^1$ vector-field on $\R^n$ so that $\abs{X}, \abs{\nabla X} \leq P(\abs{\nabla \pot},\ldots,\norm{\nabla^p \pot})$ for some real valued polynomial $P$ and $p \geq 1$. Assume that $U$ is volume regular with respect to the measure $\gamma$. Then: 
\[
\int_U \div_\gamma X \; d\gamma = \lim_{R \rightarrow \infty} \int_U \div_{\gamma}( \eta_R X) \; d\gamma . 
\]
\item
Let $X$ denote a $C^1$ vector-field on a smooth hypersurface $\Sigma \subset \R^n$ so that $\abs{X}, \abs{\nabla_{\Sigma} X} \leq P(\abs{\nabla \pot})$ for some real valued polynomial $P$ and $p \geq 1$. Assume that $\Sigma \subset \partial^*U$ with $U$ being perimeter regular with respect to the measure $\gamma$. Then:
\[
\int_{\Sigma} \div_{\Sigma,\gamma} X \; d\gamma^{n-1} = \lim_{R \rightarrow \infty} \int_\Sigma \div_{\Sigma,\gamma}( \eta_R X) \; d\gamma^{n-1} . 
\]
\end{enumerate}
\end{lemma}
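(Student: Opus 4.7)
The plan is to expand by the Leibniz rule
\[
\div_\gamma(\eta_R X) = \eta_R\, \div_\gamma X + \inr{\nabla \eta_R}{X}, \qquad \div_{\Sigma,\gamma}(\eta_R X) = \eta_R\, \div_{\Sigma,\gamma} X + \inr{\nabla_\Sigma \eta_R}{X},
\]
and to apply the dominated convergence theorem to each of the two resulting integrals. Both parts thus reduce to (i) showing $\div_\gamma X \in L^1(U,\gamma)$ (resp.\ $\div_{\Sigma,\gamma} X \in L^1(\Sigma, \gamma^{n-1})$), so that the first term converges to the desired integral, and (ii) showing that the error integral involving $\nabla \eta_R$ tends to zero.

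The key step is (i). Using $\abs{\div X} \leq \norm{\nabla X}$ and $\abs{\nabla_X \pot} \leq \abs{X}\abs{\nabla \pot}$, the polynomial hypothesis on $X$ and $\nabla X$ (resp.\ $\nabla_\Sigma X$) yields a pointwise estimate of the form $\abs{\div_\gamma X} \leq Q(\abs{\nabla \pot}, \ldots, \norm{\nabla^p \pot})$ for some real polynomial $Q$, and analogously for $\div_{\Sigma,\gamma} X$ with $p = 1$. Expanding $Q$ into monomials and applying AM-GM, any such polynomial is dominated by a finite linear combination of pure powers $\norm{\nabla^i \pot}^j$ with $i \leq p$. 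For each such monomial, the volume-regularity condition taken with $\delta = 0$---which is weaker than the stated hypothesis for some $\delta > 0$, since the sup over $B(x,\delta)$ dominates the value at $x$---gives $\int_U \norm{\nabla^i \pot}^j \, d\gamma < \infty$; the analogous statement on $\Sigma \subseteq \partial^* U$ follows from perimeter-regularity. In particular, the same argument applied to the polynomial bounding $\abs{X}$ yields $\abs{X} \in L^1(U,\gamma)$ and $\abs{X} \in L^1(\Sigma, \gamma^{n-1})$.

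With these integrability statements in hand, both limits are immediate. Since $\eta_R \to 1$ pointwise with $0 \le \eta_R \le 1$, dominated convergence (with dominating function $\abs{\div_\gamma X}$, respectively $\abs{\div_{\Sigma,\gamma} X}$) passes the limit through $\int \eta_R \div_\gamma X$. For the error term, $\nabla \eta_R$ vanishes on $B_R$ and satisfies $\abs{\nabla \eta_R} \leq 1$, while $\abs{\nabla_\Sigma \eta_R} \leq \abs{\nabla \eta_R} \leq 1$; hence $\abs{\inr{\nabla \eta_R}{X}} \leq \abs{X} \mathbf{1}_{\R^n \setminus B_R}$, which converges pointwise to zero and is dominated by the integrable function $\abs{X}$. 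A second application of dominated convergence completes the argument in both parts.

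The only real obstacle is bookkeeping: one must recognize that the polynomial growth allowed for $X$ is precisely the growth neutralized by the volume/perimeter regularity definitions, so that every integral appearing in the decomposition is absolutely convergent and no delicate cancellation at infinity is required. The cutoff $\eta_R$ serves merely as a device to legalize standard manipulations (Leibniz rule and, subsequently, integration by parts) for vector-fields that are not compactly supported, while dominated convergence handles the passage to the limit.
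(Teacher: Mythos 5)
Your proposal is correct and follows essentially the same route as the paper: the same Leibniz-rule decomposition of $\div_\gamma(\eta_R X)$ (the paper merely rearranges it into the three terms $\int_U \div_\gamma(\eta_R X)\,d\gamma - \int_U \scalar{\nabla\eta_R}{X}\,d\gamma + \int_U(1-\eta_R)\div_\gamma X\,d\gamma$), the same pointwise polynomial bounds $\abs{\div_\gamma X}\le Q(\abs{\nabla\pot},\ldots,\norm{\nabla^p\pot})$ and $\abs{\scalar{\nabla\eta_R}{X}}\le P(\cdots)$, and the same conclusion via integrability supplied by volume/perimeter regularity and vanishing tails over $U\setminus B_R$. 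Your AM--GM reduction to pure powers $\norm{\nabla^i\pot}^j$ makes explicit a step the paper leaves implicit, but the argument is the same.
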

\begin{proof}
We will prove the first part, the proof of the second one is identical. Write: 
\[
\int_U \div_{\gamma} X d\gamma = \int_U \div_\gamma (\eta_R X) d\gamma - \int_U \scalar{\nabla \eta_R,X} d\gamma + \int_U (1-\eta_R) \div_{\gamma} X d\gamma . 
\]
Next, note that:
\[
\abs{\div_\gamma X} \leq \abs{\div X} + \abs{\nabla_X \pot} \leq Q(\abs{\nabla \pot},\ldots,\norm{\nabla^p \pot}) ~,~ \abs{\scalar{\nabla \eta_R,X}} \leq P(\abs{\nabla \pot},\ldots,\norm{\nabla^p \pot}) ,
\]
for an appropriate polynomial $Q$. Consequently:
\begin{align*}
\abs{\int_U (1-\eta_R) \div_{\gamma} X d\gamma} & \leq \int_{U \setminus B_R} Q(\abs{\nabla \pot},\ldots,\norm{\nabla^p \pot})  d\gamma ~, \\
\abs{\int_U \scalar{\nabla \eta_R,X}  d\gamma} & \leq \int_{U \setminus B_R} P(\abs{\nabla \pot},\ldots,\norm{\nabla^p \pot})  d\gamma ~,
\end{align*}
and volume regularity implies that both of these terms go to zero as $R \rightarrow \infty$, yielding the claim. 
\end{proof}

\subsection{First Variation of Weighted Volume and Perimeter}

We now derive formulas for the first variations of (weighted) volume and perimeter. While these are well-known for sets with smooth boundaries and compactly supported vector-fields (see e.g. \cite{RCBMIsopInqsForLogConvexDensities}), we put special emphasis on not assuming anything on the set nor vector-field beyond what we need for the proof, as this will be important for us in the sequel. 

\begin{proposition}\label{prop:first-variation}
Let $X$ be an admissible vector-field on $\R^n$, and let $U \subset \R^n$ denote a Borel subset. \\
\begin{enumerate}
\item
If $U$ is volume regular with respect to the measure $\gamma$ and satisfies $\per(U) < \infty$, then:
  \begin{equation}\label{eq:formula-first-variation-of-volume}
    \delta_X V(U) = \int_{\partial^* U} X^\n \, d\gamma^{n-1} .
  \end{equation}
\item
If $U$ is perimeter regular with respect to the measure $\gamma$ then:
\[
\delta_X A(U) = \int_{\partial^* U} \div_{\n_U^{\perp},\gamma} X d\gamma^{n-1} .
\]
If in addition  $\partial^* U = \Sigma \cupdot \Xi$ where $\Sigma$ is a smooth hypersurface and $\H^{n-1}(\Xi) = 0$, then: 
  \begin{equation}\label{eq:formula-first-variation-of-area-before}
    \delta_X A(U) = \int_{\Sigma} (H_{\Sigma,\gamma} X^\n + \div_{\Sigma,\gamma} X^\tang) \, d\gamma^{n-1}.
  \end{equation}
 \end{enumerate}
\end{proposition}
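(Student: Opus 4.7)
The plan is to apply Lemma~\ref{lem:regular} to differentiate under the Jacobian representations (\ref{eq:Jac-vol}) and (\ref{eq:Jac-area}), carry out the standard pointwise derivative computations, and close Part 1 via a truncation argument using Lemma~\ref{lem:cutoff} together with (\ref{eq:integration-by-parts}).

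For Part 1, Lemma~\ref{lem:regular}(1) permits exchanging $\frac{d}{dt}$ at $t=0$ with the integral. The classical pointwise identities $\frac{d}{dt}\big|_{t=0} J F_t = \div X$ and $\frac{d}{dt}\big|_{t=0} e^{-W \circ F_t} = -(\nabla_X W)\, e^{-W}$ then reduce $\delta_X V(U)$ to $\int_U \div_\gamma X \, d\gamma$. Since $X$ is admissible, $|X|$ and $|\nabla X|$ are uniformly bounded, which trivially satisfies the polynomial-bound hypothesis of Lemma~\ref{lem:cutoff}(1); truncating by $\eta_R$ brings us into the compactly supported case, where (\ref{eq:integration-by-parts}) applies and produces $\int_{\partial^* U} \eta_R X^\n \, d\gamma^{n-1}$. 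Dominated convergence as $R \to \infty$, justified by uniform boundedness of $\eta_R X^\n$ and the finiteness $\per(U) = \gamma^{n-1}(\partial^* U) < \infty$, delivers (\ref{eq:formula-first-variation-of-volume}).

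For Part 2, Lemma~\ref{lem:regular}(2) again allows pointwise differentiation inside the integral. Since $dF_0 = \Id$, a standard calculation of the derivative of the tangential Jacobian gives $\frac{d}{dt}\big|_{t=0} J \Phi_t(x) = \div_{\n_U^\perp} X(x)$ at any point of $\partial^* U$, a formula meaningful there because $\n_U$ exists pointwise by definition on the reduced boundary. Combined with the density derivative, this yields the first stated formula. Notably no integration by parts enters here, which is what spares us from imposing any additional regularity on $\partial^* U$.

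For the refinement to (\ref{eq:formula-first-variation-of-area-before}): absolute continuity of $\gamma^{n-1}$ with respect to $\H^{n-1}$ makes $\Xi$ negligible, so the integral reduces to one over $\Sigma$, where $\div_{\n^\perp}$ agrees with the surface divergence $\div_\Sigma$. Decomposing $X = X^\n \n + X^\tang$ and using $\div_\Sigma \n = H_\Sigma$ together with the tangentiality of $\nabla_\Sigma X^\n$ to $\Sigma$ gives $\div_\Sigma X = H_\Sigma X^\n + \div_\Sigma X^\tang$; subtracting $\nabla_X W = X^\n \nabla_\n W + \nabla_{X^\tang} W$ yields the weighted version. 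The main obstacle throughout is Part 1's lack of compact support for $X$ together with the possible irregularity of $\partial^* U$, which is exactly why the volume/perimeter-regularity framework and the cutoff lemma were erected in advance; once these are in hand, the proof amounts to standard Jacobian identities plus one application of dominated convergence.
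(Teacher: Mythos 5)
Your proposal is correct and follows essentially the same route as the paper: differentiate under the integral via Lemma~\ref{lem:regular}, use the standard Jacobian derivatives $\frac{d}{dt}|_{t=0} JF_t = \div X$ and $\frac{d}{dt}|_{t=0} J\Phi_t = \div_{\n_U^\perp} X$, close Part 1 by truncating with $\eta_R$ and invoking \eqref{eq:integration-by-parts} plus dominated convergence, and obtain \eqref{eq:formula-first-variation-of-area-before} by the decomposition $X = X^\tang + X^\n\n$ with $\div_{\Sigma,\gamma}(X^\n\n) = H_{\Sigma,\gamma}X^\n$. No gaps.
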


\begin{proof}Let $F_t$ be the flow along $X$. Under the assumptions of the first assertion, Lemma \ref{lem:regular} implies:
\[
\delta_X V(U) = \int_U \left . \frac{d}{dt} \right |_{t=0} (J F_t e^{-\pot \circ F_t}) dx . 
\]
It is well-known (e.g. \cite[(2.13)]{SternbergZumbrun}) that $\frac{d}{dt} |_{t=0} J F_t = \div X$, and therefore:
\[
  \delta_X V(U) = \int_U \brac{\div X - \nabla_X \pot} d\gamma = \int_{U} \div_\gamma X \, d\gamma .
\]  
In order to apply integration-by-parts, we need to first make $X$ compactly supported. Applying Lemma \ref{lem:cutoff} and integrating by parts the compactly-supported vector-field $\eta_R X$ using ~\eqref{eq:integration-by-parts}, we deduce:
\[
\int_U \div_{\gamma} X d\gamma = \lim_{R \rightarrow \infty} \int_U \div_\gamma (\eta_R X) d\gamma = \lim_{R \rightarrow \infty} \int_{\partial^* U} \eta_R X^{\n} d\gamma^{n-1} = \int_{\partial^* U} X^{\n} d\gamma^{n-1} ,
\]
where the last equality follows by Dominant Convergence since $X$ is uniformly bounded and $\per(U) < \infty$.

For the second assertion, set as usual $\Phi_t = F_t|_{\partial^* U}$ and recall that $J \Phi_t = \text{det}((d_{\n_{U}^{\perp}} F_t)^T d_{\n_{U}^{\perp}} F_t)^{1/2}$ is the Jacobian of $\Phi_t$. Under the assumptions of the second assertion, Lemma \ref{lem:regular} implies:
\[
\delta_X A(U) = \int_{\partial^* U} \left . \frac{d}{dt} \right |_{t=0} (J \Phi_t e^{-\pot \circ F_t}) d\H^{n-1}(x) . 
\]
It is well-known (e.g. \cite[(2.16)]{SternbergZumbrun}) that $\frac{d}{dt} |_{t=0} J \Phi_t = \div_{\n_U^\perp} X$, and hence: 
\[
\delta_X A(U) = \int_{\partial^* U} \brac{\div_{\n_U^\perp} X - \nabla_X \pot} = \int_{\partial^* U} \div_{\n_U^\perp,\gamma} X \, d\gamma^{n-1} .
\]
To establish the last claim, simply continue as follows:
\begin{align*}
& = \int_{\Sigma} \div_{\Sigma,\gamma} X \, d\gamma^{n-1} \notag \\
& = \int_{\Sigma} \brac{\div_{\Sigma,\gamma} X^\tang + \div_{\Sigma,\gamma} (X^\n \n) } d\gamma^{n-1}  \notag \\
&= \int_{\Sigma} \brac{\div_{\Sigma,\gamma} X^\tang + H_{\Sigma,\gamma} X^\n } d\gamma^{n-1}.
\end{align*}
\end{proof}

\section{Isoperimetric Minimizing Clusters} \label{sec:first-order}

Given a cluster $\Omega = (\Omega_1,\ldots,\Omega_k)$, we define the interface between the $i$-th and $j$-th cells ($i \neq j$) as:
\[
\Sigma_{ij} = \Sigma_{ij}(\Omega) := \partial^* \Omega_i \cap \partial^* \Omega_j ,
\]
and set:
\[
A_{ij} = A_{ij}(\Omega) := \gamma^{n-1}(\Sigma_{ij}). 
\]
It is standard to show (see \cite[Exercise 29.7, (29.8)]{MaggiBook}) that for any $S \subset \set{1,\ldots,k}$:
\begin{equation} \label{eq:nothing-lost}
\H^{n-1}\brac{\partial^*(\cup_{i \in S} \Omega_i) \setminus \cup_{i \in S , j \notin S} \Sigma_{ij}} = 0 .
\end{equation}
In particular:
\[
\per(\Omega_i) = \sum_{j \neq i} A_{ij}(\Omega) , 
\]
and hence:
\[
\per(\Omega) = \frac{1}{2} \sum_{i=1}^k \per(\Omega_i) = \sum_{i < j} A_{ij}(\Omega)  .
\]

\medskip

A cluster $\Omega$ is called an \emph{isoperimetric minimizer} if $\per(\Omega') \ge
\per(\Omega)$ for every other cluster $\Omega'$ satisfying $\gamma(\Omega') = \gamma(\Omega)$.
The following theorem, due to Almgren \cite{AlmgrenMemoirs} (see also the exposition by Morgan \cite[Chapter 13]{MorganBook5Ed} and simplified presentation by Maggi \cite[Chapters 29-30]{MaggiBook}), summarizes the results we will need from Geometric Measure Theory on the existence and regularity of minimizing clusters:

\begin{theorem}[Almgren] \label{thm:Almgren}
Let $(\Real^n,\abs{\cdot},\gamma = e^{-W} dx)$ with $W \in C^\infty(\Real^n)$ so that $\gamma$ is a probability measure. Let $\simplex = \{ v \in \Real^k : v_i \geq 0 , \sum_{i=1}^k v_i = 1 \}$. 
\begin{enumerate}[(i)]
\item 
For any prescribed $v \in \simplex$, an isoperimetric minimizing $k$-cluster $\Omega$ satisfying $\gamma(\Omega) = v$ exists. 
\item Moreover, $\Omega$ may be chosen so that all of its cells are open, and for every $i$, $\gamma^{n-1}(\partial \Omega_i \setminus \partial^* \Omega_i) = 0$. In particular, for all $i \neq j$, $\gamma^{n-1}(\partial \Omega_i  \cap \partial \Omega_j \setminus \Sigma_{ij}) = 0$. 
\item For all $i \neq j$ the interfaces $\Sigma_{ij}$ are $C^\infty$ smooth $(n-1)$-dimensional submanifolds, relatively open in $\partial \Omega_i \cap \partial \Omega_j$, and for every $x \in \Sigma_{ij}$ there exists $\epsilon > 0$ such that $B(x,\epsilon) \cap \Omega_l = \emptyset$ for all $l \neq i,j$. 
\end{enumerate}
\end{theorem}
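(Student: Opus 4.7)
The plan is to follow Almgren's classical strategy, adapted to the unbounded weighted setting. The proof splits naturally into (a) existence via the direct method of the calculus of variations for (i), and (b) regularity via an almost-minimality argument for (ii) and (iii).

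For existence, I would take a minimizing sequence of clusters $\Omega^{h}$ with $\gamma(\Omega^{h}) = v$ and $\per(\Omega^{h}) \to I(v)$. The chief obstacle, compared to the standard bounded-domain setting, is ruling out loss of mass to infinity. Since $\gamma$ is a probability measure, it is tight: for every $\varepsilon > 0$ there exists $R$ with $\gamma(\R^n \setminus B_R) < \varepsilon$. On each ball $B_R$, the density $e^{-\pot}$ is bounded between positive constants, so $\per$ restricted to $B_R$ is comparable to the unweighted relative perimeter there, and classical $BV$ compactness produces a subsequence along which each $\mathbf{1}_{\Omega^{h}_i}$ converges in $L^1_{\text{loc}}$; tightness then upgrades this to $L^1(\gamma)$-convergence via a diagonal argument in $R$. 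The limit cells are pairwise disjoint (stability of disjointness under $L^1(\gamma)$-limits), cover $\R^n$ up to a $\gamma$-null set, and realize the prescribed volume vector $v$. Lower semi-continuity of $\per$ under $L^1(\gamma)$-convergence then yields $\per(\Omega) \leq \liminf_h \per(\Omega^{h}) = I(v)$, so $\Omega$ is an isoperimetric minimizer.

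For (ii) and (iii), the key input is almost-minimality of each cell: any perturbation of $\Omega_i$ supported in a small ball $B(x,r)$ may violate the volume constraint by at most $O(r^n)$, and this deficit can be absorbed by a controlled global modification whose weighted perimeter cost is likewise $O(r^n)$, so $\Omega_i$ is an $(\omega, r_0)$-minimizer of $\per$ in Almgren's sense. Almgren's interior regularity theorem (in the form presented in \cite{MaggiBook}) then produces open representatives of the cells with $\calH^{n-1}(\partial \Omega_i \setminus \partial^* \Omega_i) = 0$, and yields that each interface $\Sigma_{ij}$ is a $C^{1,\alpha}$ hypersurface relatively open in $\partial \Omega_i \cap \partial \Omega_j$; combined with (\ref{eq:nothing-lost}), this gives $\gamma^{n-1}(\partial \Omega_i \cap \partial \Omega_j \setminus \Sigma_{ij}) = 0$. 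Each smooth piece $\Sigma_{ij}$ is stationary for $\per$ subject to the volume constraint, satisfying the Euler--Lagrange equation $H_{\Sigma_{ij}, \gamma} = \lambda_i - \lambda_j$ for Lagrange multipliers $\lambda_i$. Since $\pot \in C^\infty$, Schauder bootstrap on this prescribed-weighted-mean-curvature equation upgrades $C^{1,\alpha}$ to $C^\infty$. The final local-disjointness clause of (iii) follows from the characterization of the reduced boundary via measure-theoretic densities: a small ball around $x \in \partial^* \Omega_i \cap \partial^* \Omega_j$ consists asymptotically of equal halves of $\Omega_i$ and $\Omega_j$, leaving no room for any other cell locally.

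The main obstacle is the non-compactness of $\R^n$ in the existence step; classical Almgren's theorem is stated on bounded ambient domains, so one must combine interior $BV$ compactness with tightness of $\gamma$ to prevent mass from escaping at infinity. The regularity assertions, by contrast, are entirely local and transfer directly from the Euclidean theory, since the smooth strictly-positive density $e^{-\pot}$ only perturbs the classical equations by lower-order terms that are handled by standard elliptic machinery.
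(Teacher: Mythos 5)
Your existence argument for part (i) is the same as the paper's: both exploit that $\gamma$ is a probability measure to obtain $L^1(\gamma)$-compactness of perimeter-bounded sets (local $BV$ compactness combined with tightness and a diagonal argument over radii), then conclude by dominated convergence for the volume constraint and lower semi-continuity of $\per$ for minimality. For parts (ii) and (iii) the paper does not reprove the regularity theory but cites \cite{MaggiBook} (Theorem 30.1, Lemma 30.2 and the local volume-fixing corollary) together with Morgan's regularity result for volume-constrained minimizers, noting that all these arguments are local and carry over verbatim to a smooth, strictly positive density; your route via $(\omega,r_0)$-almost-minimality and a Schauder bootstrap on the prescribed weighted-mean-curvature equation is exactly this standard path, so the two proofs coincide in substance.

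One step of your sketch is not a valid deduction as written: the final clause of (iii), that $B(x,\eps)\cap\Omega_l=\emptyset$ for all $l\neq i,j$. The blow-up characterization of a point $x\in\partial^*\Omega_i\cap\partial^*\Omega_j$ only yields that every other cell $\Omega_l$ has Lebesgue density $0$ at $x$; this is perfectly compatible with $\Omega_l$ meeting every ball $B(x,\eps)$ in a set of positive measure, so it does not by itself ``leave no room'' for $\Omega_l$ near $x$. Converting density zero at a point into genuine emptiness of $\Omega_l$ in a whole neighborhood requires the infiltration (elimination) lemma \cite[Lemma 30.2]{MaggiBook}, an iteration argument driven by the almost-minimality of the cluster; the same lemma is also what underlies the relative openness of $\Sigma_{ij}$ in $\partial\Omega_i\cap\partial\Omega_j$, which you likewise attribute to interior regularity alone. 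With that lemma supplied, your outline matches the paper's proof.
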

\begin{proof}
\hfill
\begin{enumerate}[(i)]
\item
  It is well-known (e.g.~\cite[Proposition~12.15]{MaggiBook}) that the (weighted)
  perimeter is lower semi-continuous with respect to (weighted) $L^1$
  convergence: if $U^r \to U$ in $L^1(\gamma)$ then $\liminf_r \per(U^r) \ge
  \per(U)$ for all Borel sets $U^r$ of finite (weighted) perimeter. Clearly, the
  same applies to clusters, where $L^1(\gamma)$ convergence is understood for each of
  the individual cells.
  
   It is also well-known that, since $\gamma$ has finite mass, the set $\set{ U \in \B(\R^n) : \per(U) \leq C}$ (where
  $\B(\R^n)$ denotes the collection of Borel subsets of $\R^n$) is compact in $L^1(\gamma)$ -- for bounded sets, this follows from ~\cite[Theorem~12.26]{MaggiBook}, and the general case follows by truncation with a large ball and a standard diagonalization argument (see e.g. \cite[Theorem 2.1]{RitoreRosalesMinimizersInEulideanCones}).
  
  Set $I(v) := \inf \set{ \per(\Omega) : \text{$\Omega$ is a $k$-cluster with }\gamma(\Omega) = v}$. As the latter set is clearly non-empty, obviously $I(v) < \infty$. Given $v \in \Delta$, let $\Omega^r$ be a sequence of $k$-clusters with $\gamma(\Omega^r) = v$ and $\per(\Omega^r) \rightarrow I(v)$. As $\per(\Omega^r_i) \leq \per(\Omega^r) \leq I(v)+1$ for large enough $r$, by passing to a subsequence, it follows that each of the cells $\Omega^r_i$ converges in $L^1(\gamma)$ to $\Omega_i$. By Dominated Convergence (as the total mass is finite), we must have $\gamma(\Omega_i) = v_i$, and the limiting $\Omega$ is easily seen to be a cluster (possibly after a measure-zero modification to ensure disjointness of the cells). It follows by lower semi-continuity that:
  \[
    I(v) \le \per(\Omega) \le \liminf_{r \to \infty} \per(\Omega^r) = I(v),
  \]
  and consequently $\per(\Omega) = I(v)$. Hence $\Omega$ is a minimizing cluster with $\gamma(\Omega) = v$. 
  
  Note that the proof is much simpler than the one in the unweighted setting, where the total mass is infinite, but on the other hand perimeter and volume are translation-invariant. 
  
  \item That $\Omega$ may be chosen so that $\gamma^{n-1}(\partial \Omega_i \setminus \partial^* \Omega_i) = 0$ for all $i$ follows from \cite[Theorem 30.1]{MaggiBook}; the proof carries over to the weighted setting (see below). In particular, the topological boundary of each cell has zero $\gamma^n$-measure, and so by replacing each cell by its interior, we do not change its measure nor its reduced boundary (and hence its $\gamma$-weighted perimeter), so $\Omega$ remains an isoperimetric minimizer. As this operation can only reduce the topological boundary, it still holds that $\gamma^{n-1}(\partial \Omega_i \setminus \partial^* \Omega_i) = 0$ for all $i$. In particular, since:
  \[
  \partial \Omega_i  \cap \partial \Omega_j \setminus [\partial^* \Omega_i \cap \partial^* \Omega_j] \subset [\partial \Omega_i  \setminus \partial^* \Omega_i] \cup
  [\partial \Omega_j  \setminus \partial^* \Omega_j] ,
  \]
  the final assertion follows. 
  \item
 The assertions follow from \cite[Theorem 30.1, Lemma 30.2 and Corollary 3.3]{MaggiBook}, whose proofs carry over to the weighted setting. Indeed, all of the arguments used in those proofs are local in nature, and so as long as our density $e^{-W}$ is $C^{\infty}$-smooth and positive, and hence locally bounded above and below (away from zero), the proofs of $\gamma^{n-1}(\partial \Omega_i \setminus \partial^* \Omega_i) = 0$, of the relative openness of $\Sigma_{ij}$ in $\partial \Omega_i \cap \partial \Omega_j$, and of the disjointness of $B(x,\eps)$ from $\Omega_\ell$ carry over by adjusting constants (which are local). By \cite[Corollary 3.3 and Remark 3.4]{MaggiBook}, we also know that for any $x \in \Sigma_{ij}$ there exists $r_x > 0$ so that $\Omega_i$ and $\Omega_j$ are measure-constrained weighted-perimeter minimizers in $B(x,r_x)$. Consequently, by regularity theory for volume-constrained perimeter minimizers (see Morgan \cite[Section 3.10]{MorganRegularityOfMinimizers} for an adaptation to the weighted Riemannian setting), since our density is $C^{\infty}$-smooth, it follows that $\Sigma_{ij} \cap B(x,r_x) = \partial \Omega_i \cap \partial \Omega_j \cap B(x,r_x)$ is a $C^{\infty}$-smooth $(n-1)$-dimensional submanifold.  

\end{enumerate}
\end{proof}

Given a minimizing cluster with (smooth) interfaces $\Sigma_{ij}$, let $\n_{ij}$ be the unit normal field along $\Sigma_{ij}$ that
points from $\Omega_i$ to $\Omega_j$. We use $\n_{ij}$ to co-orient $\Sigma_{ij}$, and since $\n_{ij} = -\n_{ji}$, note that $\Sigma_{ij}$ and $\Sigma_{ji}$ have opposite orientations.
When $i$ and $j$ are clear from the context, we will simply write $\n$. 
We will typically abbreviate $H_{\Sigma_{ij}}$ and $H_{\Sigma_{ij},\gamma}$ by $H_{ij}$ and $H_{ij,\gamma}$, respectively. 

\subsection{Volume and Perimeter Regularity}

For simplicity, we assume henceforth that $\gamma$ is a probability measure and that:
\begin{equation} \label{eq:decreasing}
\exists R_* \geq 0 \;\;\; [R_*,\infty) \ni R \mapsto \gamma^{n-1}(\partial B_R)  \text{ is decreasing.}
\end{equation}

\begin{lemma} \label{lem:regular-cond}
For each $i \geq 0$, let $f_i : \Real_+ \rightarrow \Real_+$ denote a $C^1$ increasing function so that:
\[
\norm{\nabla^i W(x)} \leq f_i(\abs{x}) \;\;\; \forall x \in \R^n . 
\]
Assume that for all $i,j \geq 0$, there exists $\delta_{ij} > 0$, so that defining $F_{ij} : \Real_+ \rightarrow \Real_+$ by:
\[
F_{ij}(R) := f_i(R + \delta_{ij})^j e^{\delta_{ij} f_1(R+\delta_{ij})} ,
\]
we have:
\begin{equation} \label{eq:integrability-conds}
\int_{\R^n} F_{ij}(\abs{x}) d\gamma < \infty ~,~ \int_{\R^n} F'_{ij}(\abs{x}) d\gamma < \infty . 
\end{equation} 
Then any Borel set $U \subset \R^n$ is volume regular, and the cells of any isoperimetric minimizing cluster are perimeter regular, with respect to $\gamma$. \\
Furthermore, if $\delta_{ij} \geq \delta > 0$ uniformly in $i,j \geq 0$, the above sets are in fact uniformly volume and perimeter regular, respectively. 
\end{lemma}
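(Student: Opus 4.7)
The plan is to reduce both claims to a single pointwise comparison and, for the perimeter statement, a tail estimate on $\gamma^{n-1}(\partial^* \Omega_a \setminus B_R)$ extracted from minimality. For the pointwise bound, fix $z \in B(x, \delta_{ij})$; monotonicity of $f_i$ gives $\norm{\nabla^i W(z)} \leq f_i(|x|+\delta_{ij})$, while the mean-value inequality along the segment from $x$ to $z$ yields $W(x) - W(z) \leq \delta_{ij} f_1(|x|+\delta_{ij})$. Multiplying,
\[
\sup_{z \in B(x,\delta_{ij})} \norm{\nabla^i W(z)}^j e^{-W(z)} \leq F_{ij}(|x|)\, e^{-W(x)}.
\]
For any Borel $U$, integrating this bound and enlarging to all of $\R^n$ gives $\int_U \sup \leq \int F_{ij}(|x|)\, d\gamma < \infty$ by~(\ref{eq:integrability-conds}), proving volume regularity; the uniform case is immediate when $\delta_{ij} \geq \delta$ is independent of $i,j$.

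For perimeter regularity, let $\Omega_a$ be a cell of an isoperimetric minimizing cluster $\Omega$. The pointwise bound reduces matters to showing $\int_{\partial^* \Omega_a} F_{ij}(|x|)\, d\gamma^{n-1} < \infty$; the layer-cake identity $F_{ij}(|x|) = F_{ij}(0) + \int_0^\infty F'_{ij}(R)\, \mathbf{1}_{\{|x|>R\}}\, dR$ together with Fubini gives
\[
\int_{\partial^* \Omega_a} F_{ij}(|x|)\, d\gamma^{n-1} = F_{ij}(0)\, \per(\Omega_a) + \int_0^\infty F'_{ij}(R)\, \gamma^{n-1}(\partial^* \Omega_a \setminus B_R)\, dR,
\]
so the task reduces to controlling the tail $\gamma^{n-1}(\partial^* \Omega_a \setminus B_R)$ for large $R$.

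For this tail estimate I invoke the Lagrange-penalization principle for constrained cluster minimizers (Maggi~\cite[Lemma 29.14]{MaggiBook}), which transfers verbatim to the weighted setting since $e^{-W}$ is smooth and strictly positive: there exist $\Lambda, \eps_0 > 0$ so that $\per(\Omega) \leq \per(\Omega') + \Lambda \sum_\ell |\gamma(\Omega'_\ell) - \gamma(\Omega_\ell)|$ for every cluster $\Omega'$ with $\sum_\ell \gamma(\Omega_\ell \triangle \Omega'_\ell) \leq \eps_0$. For each $b \neq a$ and a.e.\ $R$ large enough that $\gamma(\Omega_a \setminus B_R) \leq \eps_0/2$, test this with the cluster $\Omega'$ obtained from $\Omega$ by transferring $\Omega_a \setminus B_R$ into $\Omega_b$. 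A direct accounting using~(\ref{eq:nothing-lost}) shows that the interface changes on pairs $(a,\ell)$ and $(b,\ell)$ with $\ell \notin \{a,b\}$ cancel exactly, leaving
\[
\per(\Omega') - \per(\Omega) = \gamma^{n-1}(\partial B_R \cap \Omega_a) - \gamma^{n-1}(\Sigma_{ab} \setminus B_R),
\]
while the total measure change is $2\gamma(\Omega_a \setminus B_R)$. Rearranging via the penalization inequality and summing over $b \neq a$ yields
\[
\gamma^{n-1}(\partial^* \Omega_a \setminus B_R) \leq (k-1)\bigl(\gamma^{n-1}(\partial B_R) + 2\Lambda\, \gamma(B_R^c)\bigr)
\]
for a.e.\ $R$ large; monotonicity of the left-hand side in $R$ extends this to all such $R$.

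To conclude, substitute the tail bound and use the polar formula $\int g(|x|)\, d\gamma = \int_0^\infty g(R)\, \gamma^{n-1}(\partial B_R)\, dR$ (valid since $\gamma$ has a smooth density): the $\gamma^{n-1}(\partial B_R)$ contribution equals $\int F'_{ij}(|x|)\, d\gamma - F'_{ij}(0) < \infty$, and swapping order in the $\gamma(B_R^c) = \int_R^\infty \gamma^{n-1}(\partial B_r)\, dr$ term bounds it by $\int F_{ij}(|x|)\, d\gamma < \infty$, both finite by~(\ref{eq:integrability-conds}). This completes perimeter regularity, with the uniform case following identically since one $\delta$ works throughout. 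The main technical obstacle is the weighted analogue of Maggi's Lagrange-penalization lemma; modulo its straightforward adaptation, the remainder is pointwise analysis and Fubini.
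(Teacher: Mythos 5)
Your argument is correct, and the volume-regularity half plus the layer-cake/Fubini reduction of perimeter regularity to a tail bound on $\gamma^{n-1}(\partial^* \Omega_a \setminus B_R)$ coincide with the paper's proof. Where you genuinely diverge is in how that tail bound is obtained. The paper's Lemma \ref{lem:area-decay} constructs a competitor with \emph{exactly} the same measure vector, by redistributing all the exterior mass $\Omega_i \setminus B_R$ into nested annuli $B_{R_i}\setminus B_{R_{i-1}}$ of matching Gaussian measure; this uses only bare minimality (no Lagrange multiplier) together with the standing hypothesis (\ref{eq:decreasing}) that $R \mapsto \gamma^{n-1}(\partial B_R)$ is eventually decreasing, and yields the clean bound $3k\,\gamma^{n-1}(\partial B_R)$. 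You instead invoke the $\Lambda$-minimality (penalization) principle and test with a mass-transfer competitor, obtaining $(k-1)\bigl(\gamma^{n-1}(\partial B_R) + 2\Lambda\,\gamma(B_R^c)\bigr)$; your interface accounting for that competitor and the subsequent Fubini estimates are correct, and the extra $\gamma(B_R^c)$ term is indeed controlled by $\int F_{ij}(|x|)\,d\gamma$. What your route buys is independence from the monotonicity assumption (\ref{eq:decreasing}); what it costs is the need to establish the weighted analogue of the volume-fixing-variations/penalization lemma, which the paper deliberately avoids at this point (it only uses the local version later, in Theorem \ref{thm:Almgren}). That adaptation is standard for a smooth strictly positive density, so I regard this as a dependency to be justified rather than a gap. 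Two cosmetic slips: the polar-coordinate contribution is exactly $\int_{\R^n} F'_{ij}(|x|)\,d\gamma$, not that quantity minus $F'_{ij}(0)$ (the constant $F_{ij}(0)$ already sits in the $F_{ij}(0)\per(\Omega_a)$ term); and the "a.e.\ $R$" restriction is harmless as is, since the tail bound is only ever integrated in $R$, so the claimed extension to all $R$ is unnecessary.
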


For the proof of the perimeter regularity, we require the following perimeter decay estimate:
\begin{lemma}\label{lem:area-decay}
    If $\Omega = (\Omega_1,\ldots,\Omega_k)$ is an isoperimetric minimizing cluster then:
    \[
        \sum_{i=1}^k \gamma^{n-1}(\partial^* \Omega_i \setminus B_R) \leq 3 k \gamma^{n-1}(\partial B_R) 
    \]
    for all $R \geq R_*$. 
\end{lemma}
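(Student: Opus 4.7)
The plan is to invoke isoperimetric minimality of $\Omega$ against a volume-preserving competitor $\tilde\Omega$ that coincides with $\Omega$ inside $B_R$ and consists of nested spherical shells in $B_R^c$. Setting $m_i := \gamma(\Omega_i \setminus B_R)$, I would choose radii $R = c_0 \leq c_1 \leq \cdots \leq c_{k-1} \leq c_k = \infty$ so that the annular shells $A_i := B_{c_i} \setminus B_{c_{i-1}}$ satisfy $\gamma(A_i) = m_i$; this is possible because $\sum_i m_i = \gamma(B_R^c)$. Defining $\tilde\Omega_i := (\Omega_i \cap B_R) \cup A_i$, one gets $\gamma(\tilde\Omega_i) = \gamma(\Omega_i)$, so $\tilde\Omega$ is an admissible competitor.

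The next step is to bound $\per(\tilde\Omega)$ via a cell-by-cell analysis of the reduced boundary. Inside $B_R$, the cells of $\tilde\Omega$ agree with those of $\Omega$, contributing $\tfrac{1}{2}\sum_i \gamma^{n-1}(\partial^* \Omega_i \cap B_R)$. Each of the spheres $\partial B_{c_i}$, $i = 1,\ldots,k-1$, separates $A_i \subset \tilde\Omega_i$ from $A_{i+1} \subset \tilde\Omega_{i+1}$ and contributes $\gamma^{n-1}(\partial B_{c_i}) \leq \gamma^{n-1}(\partial B_R)$ by the monotonicity assumption \eqref{eq:decreasing}. On $\partial B_R$ itself, since $A_1 \subset \tilde\Omega_1$ lies adjacent from outside (and $\Omega_1$ is open), the set $\Omega_1 \cap \partial B_R$ is interior to $\tilde\Omega_1$, whereas $\partial B_R \setminus \Omega_1$ becomes a new interface of total $\gamma^{n-1}$-area at most $\gamma^{n-1}(\partial B_R)$. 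Altogether, $\per(\tilde\Omega) \leq \tfrac{1}{2}\sum_i \gamma^{n-1}(\partial^* \Omega_i \cap B_R) + k \, \gamma^{n-1}(\partial B_R)$.

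Minimality of $\Omega$ then gives $\per(\Omega) \leq \per(\tilde\Omega)$. For a.e.\ $R \geq R_*$ the reduced boundaries of $\Omega_i$ are $\gamma^{n-1}$-null on $\partial B_R$, so $\per(\Omega) = \tfrac{1}{2}\sum_i \gamma^{n-1}(\partial^* \Omega_i \cap B_R) + \tfrac{1}{2}\sum_i \gamma^{n-1}(\partial^* \Omega_i \setminus B_R)$. Rearranging yields $\sum_i \gamma^{n-1}(\partial^* \Omega_i \setminus B_R) \leq 2k \, \gamma^{n-1}(\partial B_R)$, which is stronger than the claimed $3k$ bound. Extension from a.e.\ $R$ to all $R \geq R_*$ follows from left-continuity in $R$ of the left-hand side and continuity of $R \mapsto \gamma^{n-1}(\partial B_R)$.

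The hardest part will be the precise accounting of the reduced boundary of $\tilde\Omega$ at the spheres $\partial B_R$ and $\partial B_{c_i}$, and in particular verifying that $\Omega_1 \cap \partial B_R$ is measure-theoretically interior to $\tilde\Omega_1$ while $\partial B_R \setminus \Omega_1$ contributes fully to its reduced boundary. The generous $3k$ constant in the statement comfortably absorbs the slack inherent in this approach.
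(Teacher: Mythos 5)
Your construction is exactly the paper's: the same nested-annuli competitor $\tilde\Omega_i = (\Omega_i\cap B_R)\cup(B_{c_i}\setminus B_{c_{i-1}})$ with shells matched to the exterior masses of the cells, followed by an appeal to minimality. The only difference is in bounding $\per(\tilde\Omega)$: the paper sidesteps the delicate reduced-boundary accounting that you correctly flag as the hardest step by using crude subadditivity, $\per(\tilde\Omega_i)\le\per(\Omega_i\cap B_R)+\per(B_{c_i}\setminus B_{c_{i-1}})\le\gamma^{n-1}(\partial^*\Omega_i\cap B_R)+3\gamma^{n-1}(\partial B_R)$, which is where the constant $3k$ (rather than your sharper but harder-to-justify $2k$) comes from and which amply suffices for the application.
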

\begin{proof}
   Let $R \geq R_*$. 
   Since $\gamma(\R^n \setminus B_R) = \sum_{i=1}^k \gamma(\Omega_i \setminus B_R)$, we may choose a non decreasing sequence $R = R_0 \leq R_1 \leq R_2 \leq \ldots R_{k-1} \leq R_k = \infty$ so that $\gamma(B_{R_i} \setminus B_{R_{i-1}}) = \gamma(\Omega_i \setminus B_R)$ for all $i=1,\ldots,k$. 
   Now define the cells of a competing cluster $\tilde \Omega$ as follows:
    \[
        \tilde \Omega_i := (\Omega_i \cap B_R) \cup (B_{R_i} \setminus B_{R_{i-1}})  .
    \]
    Clearly $\gamma(\tilde \Omega) = \gamma(\Omega)$.
                    Now observe that (see e.g. \cite[Lemma 12.22, Theorem 16.3]{MaggiBook}):
    \begin{align*}
        \per(\tilde \Omega_i)
        &\le \per(\Omega_i \cap B_R) + \per(B_{R_i} \setminus B_{R_{i-1}}) \\
        & \le \gamma^{n-1}(\partial^* \Omega_i \cap B_R) + \gamma^{n-1}(\partial B_R) + \gamma^{n-1}(\partial B_{R_i}) + \gamma^{n-1}(\partial B_{R_{i-1}}) \\
        &\le \gamma^{n-1}(\partial^* \Omega_i\cap B_R) + 3 \gamma^{n-1}(\partial B_R) .
    \end{align*}
   Summing in $i$ and dividing by $2$,
    \[
        \per(\tilde \Omega) \leq \frac{1}{2} \sum_{i=1}^k \gamma^{n-1}(\partial^* \Omega_i\cap B_R) + \frac{3}{2} k \gamma^{n-1}(\partial B_R) .
            \]
    On the other hand, the fact that $\Omega$ is minimizing and $\gamma(\Omega) = \gamma(\tilde \Omega)$ implies that
    \[
        \per(\tilde \Omega) \ge \per(\Omega) = \frac{1}{2} \sum_{i=1}^k \gamma^{n-1}(\partial^* \Omega_i) .
    \]
    Combining these inequalities yields the assertion.
\end{proof}

\begin{proof}[Proof of Lemma \ref{lem:regular-cond}]
Note that by the intermediate value theorem: 
\[
\abs{\pot(z) -\pot(x)} \leq \abs{z-x} f_1(\max(\abs{x},\abs{z})) .
\]
Consequently:
\[
\sup_{z \in B(x,\delta_{ij})} \norm{\nabla^i \pot(z)}^j e^{-\pot(z)} \leq f_i(\abs{x}+\delta_{ij})^j e^{\delta_{ij} f_1(\abs{x}+\delta_{ij})} e^{-\pot(x)} = F_{ij}(\abs{x}) e^{-\pot(x)} ,
\]
and so if for any $i,j \geq 0$, the right-hand function is integrable on $\R^n$, then all Borel sets are volume regular with respect to $\gamma$. 

Now if $\Omega = (\Omega_1,\ldots,\Omega_k)$ is an isoperimetric minimizing cluster, then for each of its cells $U$:
\[
\int_{\partial^* U} \sup_{z \in B(x,\delta_{ij})} \norm{\nabla^i \pot(z)}^j e^{-\pot(z)} d\H^{n-1} \leq \int_{\partial^* U} F_{ij}(\abs{x}) d\gamma^{n-1}(x) .
\]
Integrating by parts and applying Lemma \ref{lem:area-decay}, it follows that:
\begin{align*}
& \leq F_{ij}(R_*) \gamma^{n-1}(\partial^* U) + \int_{R_*}^\infty F_{i,j}'(R) \gamma^{n-1}(\partial^* \Omega_i \setminus B_R) dR \\
& \leq  F_{ij}(R_*) \gamma^{n-1}(\partial^* U) + 3 k \int_{R_*}^\infty F_{i,j}'(R) \gamma^{n-1}(\partial B_R) dR \\
& =  F_{ij}(R_*) \gamma^{n-1}(\partial^* U) + 3 k \int_{\R^n \setminus B_{R_*}} F'_{i,j}(\abs{x}) d\gamma(x) . 
\end{align*}
Since $\gamma^{n-1}(\partial^* U) < \infty$ for any cell of a minimizing cluster, the perimeter regularity of $U$ with respect to $\gamma$ follows as soon as the second term above is finite for all $i,j \geq 0$, as asserted. 
\end{proof}

\begin{corollary} \label{cor:Gaussian-regular}
For the standard Gaussian measure $\gamma$, any Borel set $U \subset \R^n$ is uniformly volume regular, and the cells of any isoperimetric minimizing cluster are uniformly perimeter regular, with respect to $\gamma$. 
\end{corollary}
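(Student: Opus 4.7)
The plan is to directly verify the hypotheses of Lemma \ref{lem:regular-cond} with a single uniform choice of $\delta_{ij}$. For the standard Gaussian, $\pot(x) = \abs{x}^2/2 + (n/2)\log(2\pi)$, so $\nabla \pot(x) = x$, $\nabla^2 \pot(x) = \Id$, and $\nabla^i \pot \equiv 0$ for $i \geq 3$. Hence we may take $f_0(r) = r^2/2 + C_n$, $f_1(r) = r$, $f_2(r) \equiv \sqrt{n}$, and $f_i \equiv 0$ for $i \geq 3$; each $f_i$ is $C^1$ and increasing (or constant). We also need the monotonicity assumption (\ref{eq:decreasing}), but this is immediate from the explicit formula $\gamma^{n-1}(\partial B_R) = c_n R^{n-1} e^{-R^2/2}$, whose derivative in $R$ has sign $n-1 - R^2$, so the function is decreasing for $R \geq \sqrt{n-1} =: R_*$.

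Next, I would fix a uniform $\delta_{ij} = \delta$ for all $i, j \geq 0$ (any $\delta > 0$ will work; say $\delta = 1$). With this choice, $f_1(R+\delta) = R+\delta$, so
\[
F_{ij}(R) = f_i(R+\delta)^j \, e^{\delta(R+\delta)}.
\]
Since $f_i(r)$ is polynomial in $r$ of degree at most $2$ (for $i = 0$), linear (for $i = 1$), or constant (for $i \geq 2$), $F_{ij}(R)$ is of the form $P_{ij}(R)\, e^{\delta R}$ for some polynomial $P_{ij}$ depending on $i, j$. Differentiating, $F'_{ij}(R)$ is likewise a polynomial times $e^{\delta R}$. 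The required integrability then amounts to showing that for any polynomial $P$,
\[
\int_{\R^n} P(\abs{x})\, e^{\delta \abs{x}} \, d\gamma(x) < \infty,
\]
which follows at once because the integrand is dominated by $P(\abs{x})\, e^{\delta \abs{x} - \abs{x}^2/2}$ (up to the normalization constant), and the Gaussian tail $e^{-\abs{x}^2/2}$ beats any polynomial times $e^{\delta \abs{x}}$. This verifies both integrability conditions in (\ref{eq:integrability-conds}) simultaneously.

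Having checked all hypotheses of Lemma \ref{lem:regular-cond} with a $\delta$ that is uniform in $i, j$, the ``furthermore'' clause of that lemma yields uniform volume regularity of every Borel set, and uniform perimeter regularity of the cells of any isoperimetric minimizing cluster. There is essentially no obstacle here beyond bookkeeping: the only quantitative input is the super-exponential decay of the Gaussian density, which trivially dominates the factor $e^{\delta f_1(R+\delta)} = e^{\delta(R+\delta)}$ that arises from the mean-value estimate on $\pot$ used in the proof of Lemma \ref{lem:regular-cond}.
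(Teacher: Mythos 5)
Your proof is correct and follows essentially the same route as the paper: both verify condition (\ref{eq:decreasing}) from the explicit formula for $\gamma^{n-1}(\partial B_R)$ and then apply Lemma \ref{lem:regular-cond} with $f_0(R)=R^2/2$ (plus the normalization constant), $f_1(R)=R$, $f_2(R)=\sqrt{n}$, $f_i\equiv 0$ for $i\geq 3$, and a uniform $\delta=1$, the integrability in (\ref{eq:integrability-conds}) being immediate since the Gaussian density dominates any polynomial times $e^{\delta R}$. Your write-up merely spells out the bookkeeping that the paper leaves as ``immediate to verify.''
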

\begin{proof}
Note that (\ref{eq:decreasing}) indeed holds since $\gamma^{n-1}(\partial B_R) = c_n R^{n-1} e^{-R^2/2}$. Setting $f_0(R) = \frac{R^2}{2}$, $f_1(R) = R$, $f_2(R) = \sqrt{n}$, $f_i(R) = 0$ for all $i \geq 3$ and $\delta = 1$, it is immediate to verify the integrability conditions (\ref{eq:integrability-conds}), as the Gaussian measure decays faster than any polynomial or exponential function. The assertion therefore follows by Lemma \ref{lem:regular-cond}. 
\end{proof}

We henceforth proceed under the following assumption (note that as long we take only a finite number of variations in our analysis, the uniformity assumption may be dropped):
\begin{equation} \label{eq:gamma-regular}
\begin{array}{l}
\text{$\gamma$ is a probability measure for which all cells of any isoperimetric}\\
\text{minimizing cluster are uniformly volume and perimeter regular.}
\end{array}
\end{equation}

\subsection{Stationarity}

In this subsection, we show that a minimizing cluster satisfies the stationarity property: 
\[
\delta_X V = 0 \;\; \Rightarrow \;\; \delta_X A = 0 ,
\]
for all admissible vector-fields $X$. When $X$ is compactly supported this is well-known and standard in the single-bubble case, and was proved for Euclidean double-bubbles in \cite{DoubleBubbleInR3} assuming higher-order boundary regularity, which we avoid in this work (see Remark \ref{rem:no-higher-regularity} below). 

In what follows, let (\ref{eq:gamma-regular}) hold, let $\Omega$ be a minimizing cluster, and let $\Sigma_{ij}$ be its interfaces. Recall that $E = \set{x \in \R^k : \sum_{i=1}^k x_i = 0}$. 

\begin{lemma}\label{lem:span}
    If $\gamma(\Omega) \in \interior \simplex$ then the set $\{e_i - e_j: \gamma^{n-1}(\Sigma_{ij}) > 0\}$ spans $E$.
\end{lemma}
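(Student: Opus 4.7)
The plan is to translate the spanning condition into a graph-connectivity condition. Define the undirected graph $G$ on the vertex set $\{1,\ldots,k\}$ by placing an edge between $i$ and $j$ precisely when $\gamma^{n-1}(\Sigma_{ij}) > 0$. A routine spanning-tree argument shows that, for each connected component $C$ of $G$, the span of $\{e_i - e_j : i,j \in C\}$ equals $\{x \in \R^k : \supp(x) \subseteq C,\ \sum_{i \in C} x_i = 0\}$, which has dimension $|C|-1$; these subspaces sum directly inside $E$ to total dimension $k$ minus the number of components of $G$. Since $\dim E = k-1$, the vectors $\{e_i - e_j : i \sim j\}$ span $E$ if and only if $G$ has a single connected component. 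So it suffices to show $G$ is connected.

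Assume toward a contradiction that $G$ is disconnected, and let $S \subsetneq \{1,\ldots,k\}$ be a proper nonempty union of connected components. Then by construction $\gamma^{n-1}(\Sigma_{ij}) = 0$ for every $i \in S$ and $j \notin S$. Set $U := \bigcup_{i \in S} \Omega_i$ and apply identity~(\ref{eq:nothing-lost}) to this $S$, obtaining
\[
\H^{n-1}\brac{\partial^* U \setminus \bigcup_{\substack{i \in S \\ j \notin S}} \Sigma_{ij}} = 0.
\]
Combined with the vanishing of $\gamma^{n-1}(\Sigma_{ij})$ across the cut, and with the fact that $\gamma^{n-1}$ and $\H^{n-1}$ share the same null sets (the density $e^{-W}$ being strictly positive and smooth), this yields $\per(U) = \gamma^{n-1}(\partial^* U) = 0$.

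To finish I would invoke the standard fact that a Borel subset of $\R^n$ with zero weighted perimeter has trivial $\gamma$-measure. Indeed, $\per(U) = 0$ is equivalent (up to multiplication by the positive smooth density) to the unweighted distributional gradient of $\mathbf{1}_U$ vanishing, which forces $\mathbf{1}_U$ to be almost-everywhere constant on the connected space $\R^n$, and hence $\gamma(U) \in \{0, 1\}$. But the hypothesis $\gamma(\Omega) \in \interior \simplex$ ensures every $\gamma(\Omega_i) > 0$, and since $S$ is a proper nonempty subset we obtain $0 < \gamma(U) = \sum_{i \in S} \gamma(\Omega_i) < 1$ --- a contradiction.

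The only step that is not pure bookkeeping is the final implication ``zero weighted perimeter forces trivial weighted measure''; this is classical but worth stating cleanly. Everything else (the linear-algebra reduction and the application of~(\ref{eq:nothing-lost})) is routine.
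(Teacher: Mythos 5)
Your proposal is correct and follows essentially the same route as the paper: the paper's dual formulation (a nonzero $v\in E$ with $v_i=v_j$ whenever $\gamma^{n-1}(\Sigma_{ij})>0$, partitioned into $S=\{v_i<0\}$ and its complement) is exactly your graph-components decomposition, and both arguments then apply~(\ref{eq:nothing-lost}) to get $\per(U)=0$ and contradict $\gamma(U)\in(0,1)$. The only cosmetic difference is the last step, where the paper invokes the single-bubble Gaussian isoperimetric inequality while you invoke the constancy theorem for sets of vanishing distributional perimeter; both are standard and valid.
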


\begin{proof}
    Suppose in the contrapositive that $\{e_i - e_j: \gamma^{n-1}(\Sigma_{ij}) > 0\}$ does not span $E$.
    Then there exists some non-zero $v$ in $E$ such that $v_i = v_j$ whenever $\gamma^{n-1}(\Sigma_{ij}) > 0$.
    Let $S = \{i : v_i < 0\}$ and let $T = \{i : v_i \ge 0\}$. Since $v \in E$ implies $\sum_i v_i = 0$, it follows
    that both $S$ and $T$ are non-empty. Since $i \in S$ and $j \in T$ imply that $v_i \ne v_j$, it follows that
    $\gamma^{n-1}(\Sigma_{ij}) = 0$ whenever $i \in S$ and $j \in T$.

    Now let $U = \bigcup_{i \in S} \Omega_i$. By (\ref{eq:nothing-lost}) it follows that $\per(U) = \gamma^{n-1} (\partial^* U) = 0$ and hence $\H^{n-1}(\partial^* U) = 0$. But the condition $\gamma(\Omega) \in \interior \simplex$ implies that $\gamma(U) = \sum_{i \in S} \gamma(\Omega_i) \in (0, 1)$ and hence $\H^n(U) > 0$ and $\H^n(\R^n \setminus U) > 0$, in contradiction to $\H^{n-1}(\partial^* U) = 0$. (To see the contradiction, one may simply endow Euclidean space $\R^n$ with the standard Gaussian measure $\gamma_G$, infer that $\gamma_G(U) \in (0,1)$, and apply the single-bubble Gaussian isoperimetric inequality to conclude that $\gamma_G^{n-1}(\partial^* U) > 0$ and hence $\H^{n-1}(\partial^* U) > 0$).
\end{proof}

\begin{corollary} \label{cor:pos-def}
Denoting $A_{ij} := \gamma^{n-1}(\Sigma_{ij})$, the matrix:
\[
     L_{A} := \sum_{i < j} A_{ij} (e_i - e_j) (e_i-e_j)^T 
\]
is strictly positive-definite on $E$ when $\gamma(\Omega) \in \interior \simplex$. 
\end{corollary}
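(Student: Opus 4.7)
The plan is to derive this corollary as an almost immediate consequence of Lemma~\ref{lem:span}. The matrix $L_A$ is a non-negative linear combination of the rank-one matrices $(e_i - e_j)(e_i - e_j)^T$, each of which is positive semi-definite, so $L_A$ is positive semi-definite on all of $\R^k$, and in particular as a quadratic form on $E$. The only question is strict positivity.

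First I would rewrite the quadratic form explicitly: for any $v \in E$,
\[
v^T L_A v = \sum_{i < j} A_{ij} \langle v, e_i - e_j \rangle^2 = \sum_{i<j} A_{ij} (v_i - v_j)^2.
\]
Suppose $v \in E$ satisfies $v^T L_A v = 0$. Since every summand is non-negative and $A_{ij} \geq 0$, this forces $\langle v, e_i - e_j \rangle = 0$ for every pair $(i,j)$ with $A_{ij} > 0$. In other words, $v$ is orthogonal (in the ambient inner product on $\R^k$) to every vector in the set $\{e_i - e_j : A_{ij} > 0\}$.

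Now I would invoke Lemma~\ref{lem:span}: under the hypothesis $\gamma(\Omega) \in \interior \simplex$, this set spans $E$. Therefore $v$ is orthogonal to all of $E$. But $v \in E$ itself, so $v \perp v$, which gives $v = 0$. This shows the kernel of $L_A$ restricted to $E$ is trivial, so $L_A$ is strictly positive-definite on $E$.

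There is no real obstacle here; the entire content is packaged into Lemma~\ref{lem:span}, and the corollary is essentially a one-line unpacking of that lemma together with the definition of $L_A$ as a sum of rank-one forms indexed by the $e_i - e_j$. The only thing to be careful about is to work throughout with the restriction of $L_A$ to $E$ (so that orthogonality to a spanning set of $E$ really does force the vector to vanish), which is exactly the sense of positive-definiteness stated in the corollary.
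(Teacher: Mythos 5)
Your proof is correct and is precisely the intended (unwritten) argument: the paper states this as an immediate corollary of Lemma~\ref{lem:span}, and your expansion of $v^T L_A v = \sum_{i<j} A_{ij}(v_i - v_j)^2$ together with the spanning property is exactly that one-line unpacking.
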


\begin{lemma}\label{lem:compensators}
    If $\gamma(\Omega) \in \interior \simplex$ then
    there exists a collection of $C_c^\infty$ vector-fields $Y_1, \dots, Y_{k-1}$ with disjoint supports, so that for every $p=1,\ldots,k-1$,  $(Y_p)|_{\cup_{i<j}\Sigma_{ij}}$ is supported in $\Sigma_{ij}$ for some $i<j$, and such the set $\{\delta_{Y_i} V: i = 1, \dots, k-1\}$ spans $E$.
\end{lemma}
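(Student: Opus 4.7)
The plan is to realize each $Y_p$ as a smooth bump supported in a tiny ball around a point of a single interface $\Sigma_{i_p j_p}$ and pointing along the normal $\n_{i_p j_p}$; the first-variation formula will then give $\delta_{Y_p} V = c_p (e_{i_p} - e_{j_p})$ for some $c_p > 0$, and linear independence of the vectors $e_{i_p} - e_{j_p}$ will deliver the desired spanning property.

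First, to select the pairs, apply Lemma \ref{lem:span}: the set $\set{e_i - e_j : A_{ij} > 0}$ spans the $(k-1)$-dimensional space $E$, so one can extract from it a basis $\set{e_{i_p} - e_{j_p}}_{p=1}^{k-1}$ with $i_p < j_p$ and $A_{i_p j_p} > 0$. Linear independence forces the pairs $(i_p, j_p)$ to be pairwise distinct.

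Next, for each $p$ pick a point $x_p \in \Sigma_{i_p j_p}$, which is non-empty since $A_{i_p j_p} > 0$. By Theorem \ref{thm:Almgren}(iii), $\Sigma_{i_p j_p}$ is a smooth $(n-1)$-submanifold near $x_p$, and there exists $\eps_p > 0$ with $B(x_p, \eps_p) \cap \Omega_\ell = \emptyset$ for every $\ell \neq i_p, j_p$. In particular $x_p$ lies on no other interface, so the points $x_p$ are distinct; after shrinking each $\eps_p$, we may also arrange that the closed balls $\overline{B(x_p, \eps_p)}$ are pairwise disjoint and disjoint from $\overline{\Omega_\ell}$ for each $\ell \neq i_p, j_p$ (noting that since $\Omega_\ell$ is open, the condition $B(x_p, \eps_p) \cap \Omega_\ell = \emptyset$ persists on a slightly smaller closed ball). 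Using a tubular neighborhood of $\Sigma_{i_p j_p}$, smoothly extend $\n_{i_p j_p}$ to a unit vector-field $N_p$ on $B(x_p, \eps_p)$, choose a non-negative bump $\chi_p \in C_c^\infty(B(x_p, \eps_p))$ with $\chi_p(x_p) > 0$, and define $Y_p := \chi_p N_p$.

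Finally, apply the first-variation formula (Proposition \ref{prop:first-variation}): for each cell $\Omega_\ell$,
\[
\delta_{Y_p} V(\Omega_\ell) = \int_{\partial^* \Omega_\ell} \inr{Y_p}{\n_{\Omega_\ell}} \, d\gamma^{n-1}.
\]
For $\ell \neq i_p, j_p$ the set $\partial^* \Omega_\ell \subseteq \overline{\Omega_\ell}$ is disjoint from $\supp Y_p$, so the integral vanishes. For $\ell \in \set{i_p, j_p}$ only $\Sigma_{i_p j_p}$ contributes (by (\ref{eq:nothing-lost}) together with the disjointness from all other cells inside $B(x_p, \eps_p)$), and $\n_{\Omega_{i_p}} = \n_{i_p j_p} = -\n_{\Omega_{j_p}}$ there. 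Shrinking $\eps_p$ once more so that $\inr{N_p}{\n_{i_p j_p}} > 0$ on $\Sigma_{i_p j_p} \cap B(x_p, \eps_p)$ (by continuity, since it equals $1$ at $x_p$), we obtain $\delta_{Y_p} V = c_p (e_{i_p} - e_{j_p})$ for some $c_p > 0$. Since $\set{e_{i_p} - e_{j_p}}_p$ is a basis of $E$, the set $\set{\delta_{Y_p} V}_p$ spans $E$, as required. There is no substantive obstacle here; the only care needed is in ensuring both the disjointness of supports and a non-zero single-interface contribution, both of which fall out of Theorem \ref{thm:Almgren}(iii).
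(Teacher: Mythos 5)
Your proposal is correct and follows essentially the same route as the paper: both use Lemma \ref{lem:span} to select pairs with $A_{ij}>0$, Theorem \ref{thm:Almgren}(iii) to find points whose neighborhoods meet only the single interface $\Sigma_{ij}$, and a compactly supported bump times a smooth extension of $\n_{ij}$, so that the first-variation formula gives $\delta_{Y_p}V = c_p(e_{i_p}-e_{j_p})$ with $c_p>0$. The only cosmetic difference is that you extract a basis from $\{e_i-e_j : A_{ij}>0\}$ before constructing the fields, whereas the paper constructs a field for every such pair and selects the subset afterwards.
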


\begin{proof}
    For each pair $i, j$ with $\gamma^{n-1}(\Sigma_{ij}) > 0$, choose some $x_{ij} \in \Sigma_{ij}$.
    By Theorem \ref{thm:Almgren}, there exists some $\eps > 0$ such that $B(x_{ij},\eps)$ is disjoint from all
    the interfaces besides $\Sigma_{ij}$ and $\cl(B(x_{ij},\eps) \cap \Sigma_{ij}) \subset \Sigma_{ij}$, where $\cl$ denotes closure. By replacing $\eps$ by $\eps/2$, we can ensure that all of the $B(x_{ij},\eps)$ are pairwise disjoint.  
    Let $f_{ij}$ be a non-negative $C_c^\infty$ function supported in $B(x_{ij},\eps)$
    such that $f(x_{ij}) > 0$, and let $X_{ij}$ be a smooth extension of $f_{ij} \n_{ij}$ that is supported in $B(x_{ij},\eps)$.
    Then $\delta_{X_{ij}} V = \alpha_{ij} (e_i - e_j)$ for some $\alpha_{ij} > 0$.

    By Lemma~\ref{lem:span}, $\{\delta_{X_{ij}} V: \gamma^{n-1}(\Sigma_{ij}) > 0\}$ spans $E$; hence,
    we may choose $Y_1, \dots, Y_{k-1}$ to be an appropriate subset of the $X_{ij}$.
\end{proof}

\begin{lemma}[Stationarity] \label{lem:first-order}
    Let $\Omega$ be a minimizing cluster.  For any admissible vector-field $X$, if $\delta_X V = 0$ then $\delta_X A = 0$.
\end{lemma}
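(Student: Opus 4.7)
The plan is a standard volume-preserving variation argument: compensate for $X$ by adding a linear combination of the compactly-supported vector-fields $Y_p$ from Lemma~\ref{lem:compensators} to obtain a variation that preserves Gaussian measure, and then invoke the minimality of $\Omega$. I focus on the principal case $\gamma(\Omega) \in \interior \simplex$ where Lemma~\ref{lem:compensators} applies; the remaining case $\gamma(\Omega) \in \partial \simplex$ reduces to a cluster with strictly fewer cells in the interior of its simplex, since if $\gamma(\Omega_i) = 0$ then minimality forces $\Omega_i$ to be $\gamma$-null and hence removable.

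Let $F_t$ denote the flow of $X$ and $G_p^{s_p}$ the flow of $Y_p$. Since the supports of $Y_1, \dots, Y_{k-1}$ are pairwise disjoint, the flows $G_p^{s_p}$ pairwise commute, and I define the $k$-parameter family of diffeomorphisms
\[
 H_{t,s} := F_t \circ G_1^{s_1} \circ \cdots \circ G_{k-1}^{s_{k-1}}, \qquad s = (s_1, \dots, s_{k-1}) \in \R^{k-1},
\]
and set $v(t,s) := \gamma(H_{t,s}(\Omega)) \in \simplex$ and $a(t,s) := \per(H_{t,s}(\Omega))$. Using the Jacobian representations~\eqref{eq:Jac-vol} and~\eqref{eq:Jac-area}, the uniform regularity assumption~\eqref{eq:gamma-regular}, and a routine multi-parameter extension of Lemma~\ref{lem:regular} (the compact support of the $Y_p$ renders the $s$-dependence harmless, while the admissibility of $X$ handles the $t$-dependence via dominated convergence), both $v$ and $a$ are $C^1$ on a neighborhood of the origin, with
\[
 \partial_t v(0,0) = \delta_X V,\;\; \partial_{s_p} v(0,0) = \delta_{Y_p} V,\;\; \partial_t a(0,0) = \delta_X A,\;\; \partial_{s_p} a(0,0) = \delta_{Y_p} A.
\]

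Since $\{\delta_{Y_p} V\}_{p=1}^{k-1}$ is a basis of $E$, the implicit function theorem yields a $C^1$ function $s(t)$ with $s(0) = 0$ and $v(t, s(t)) = \gamma(\Omega)$ for $|t|$ small. Differentiating this identity at $t=0$ and using $\delta_X V = 0$ together with the linear independence of the $\delta_{Y_p} V$ forces $s'(0) = 0$. Now $H_{t, s(t)}(\Omega)$ is a cluster with the same Gaussian measure as $\Omega$, so by minimality $a(t, s(t)) \ge a(0, 0)$; hence $\tfrac{d}{dt}\big|_{t=0} a(t, s(t)) = 0$, and by the chain rule this derivative equals $\delta_X A + \sum_p s_p'(0)\, \delta_{Y_p} A = \delta_X A$, yielding the claim. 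The main technical obstacle is the joint $C^1$ regularity of $v$ and $a$ in $(t,s)$, which requires the uniform regularity assumption~\eqref{eq:gamma-regular} to justify differentiation under the integral sign along the non-compactly supported direction $X$, exactly as in Lemma~\ref{lem:regular}; everything else is a standard application of the implicit function theorem.
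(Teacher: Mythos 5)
Your proposal is correct and follows essentially the same route as the paper: compensate $X$ with the disjointly-supported fields of Lemma~\ref{lem:compensators}, use the implicit function theorem to produce a measure-preserving curve $s(t)$ with $s'(0)=0$ (thanks to $\delta_X V=0$), and conclude $\delta_X A=0$ from minimality via the chain rule. The only cosmetic differences are the order in which you compose the flows and that you phrase the final step as ``interior minimum of a differentiable function has vanishing derivative'' rather than the paper's contradiction; both are immaterial, and your explicit remark on the boundary case $\gamma(\Omega)\in\partial\simplex$ is a reasonable addition.
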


\begin{proof}
   Let $F_t$ denote the flow along $X$, defined as usual by:
     \[
    \frac{d}{dt} F_t = X \circ F_t ~,~ F_0 = \text{Id} .
    \]
    Choose a family $Y_1, \dots, Y_{k-1}$ of vector-fields as in Lemma~\ref{lem:compensators}, having compact and pairwise disjoint supports. 
    Let $\{F_{t,s}\}_{t \in \R , s \in \R^{k-1}}$ be a family of $C^\infty$ diffeomorphisms defined by solving the following system of linear stationary PDEs:
    \begin{align*}
    \pdiff{}{s_i} F_{t,s} &= Y_i \circ F_{t,s} \;\;\; \forall i=1,\ldots,k-1 \\
     F_{t,\vec 0} &= F_t .
    \end{align*}
                                                                  Observe that the above system is indeed integrable since the $Y_i$'s have disjoint supports, and hence the flows they individually generate necessarily commute (cf. the Frobenius Theorem \cite{Lang-ManifoldsBook}). Consequently:
     \begin{equation} \label{eq:concat}
     F_{t,s} = F^{k-1}_{s_{k-1}} \circ \ldots \circ F^{1}_{s_1} \circ F_t ,
     \end{equation}
     where:
     \[
     \frac{d}{ds} F^i_s = Y_i \circ F^i_s ~,~ F^i_0 = \text{Id} \;\;\; \forall i=1,\ldots,k-1 ~, 
     \]
     and all of the usual smoothness and uniform boundedness of all partial derivatives of any fixed order apply to $F_{t,s}$ for  $t \in [-T,T]$, $s \in [-T,T]^{k-1}$, for any fixed $T > 0$. 
     
    Let $V(t, s) = \gamma(F_{t,s}(\Omega))$ and $A(t,s) = \per(F_{t,s}(\Omega))$. By the assumption (\ref{eq:gamma-regular}) and a tedious yet straightforward adaptation of the proof of Lemma \ref{lem:regular} to a concatenation of (partly non-commuting) flows as in (\ref{eq:concat}), it follows that $V$ and $A$ are both $C^\infty$ on $\{(t,s) : t \in (-\epsilon,\epsilon) , s \in (-\epsilon,\epsilon)^{k-1}\}$ for some $\epsilon > 0$.         Clearly:
    \begin{align*}
        \left.\frac{\partial^m V}{(\partial t)^m}\right|_{t=0,s=0} = \delta_X^m V ~,~ &
        \left.\frac{\partial^m A}{(\partial t)^m}\right|_{t=0,s=0} = \delta_X^m A \;\;\;\;\;\forall m =1,2,\ldots\\
        \left.\frac{\partial V}{\partial s_i}\right|_{t=0,s=0} = \delta_{Y_i} V  ~,~ &
        \left.\frac{\partial A}{\partial s_i}\right|_{t=0,s=0} = \delta_{Y_i} A.
    \end{align*}
    Since $\{\delta_{Y_i} V\}_{i=1,\ldots,k-1}$ span $E$, $\{\partial_{s_i} V_j(0, 0)\}_{ji} : \R^{k-1} \to E$ is full rank. By the implicit
    function theorem, there exists a $\delta \in (0,\epsilon)$ and a $C^\infty$ curve $s(t) : (-\delta,\delta) \rightarrow (-\epsilon,\epsilon)^{k-1}$, such that $s(0) = 0$ and $V(t,s(t)) = V(0,0) = \gamma(\Omega)$ for all $|t| < \delta$. Moreover, $\frac{\partial V}{\partial t}(0,0) = \delta_X V = 0$ and the full-rank of $\{\partial_{s_i} V_j(0, 0)\}$ imply that $s'(0) = 0$. From this property and the chain rule,
    \[
        \diffat{A(t,s(t))}{t}{t=0} = \pdiffat{A(t,s)}{t}{t=0,s=0} = \delta_X A.
    \]
    Hence, we conclude that $\delta_X A = 0$; if not, there is some $t \ne 0$ such that the cluster
    $\tilde \Omega = F_{t,s(t)}(\Omega)$ has $\gamma(\tilde \Omega) = \gamma(\Omega)$ and
    $\per(\tilde \Omega) = A(t,s(t)) < A(0,0) = \per(\Omega)$, contradicting the minimality of $\Omega$.
\end{proof}

\subsection{First-Order Conditions}

First-order conditions for an isoperimetric minimizing cluster are well-understood, and points~\ref{it:first-order-constant},~\ref{it:first-order-cyclic} and~\ref{it:first-order-lambda}
below are well-known. We denote by $E^*$ the dual of $E$; as usual, $E^*$ may be identified with $E$, acting by the Euclidean inner product.

\begin{theorem}\label{thm:first-order-conditions-expanded}
  Assume (\ref{eq:gamma-regular}) holds. If $\Omega$ is an isoperimetric minimizing cluster then:
  \begin{enumerate}[(i)]
    \item On each $\Sigma_{ij}$, $H_{ij,\gamma}$ is constant. \label{it:first-order-constant}
    \item For all $\cyclic \in \Trips$, $\displaystyle \sum_{(i, j) \in \cyclic} H_{ij,\gamma} = 0$. \label{it:first-order-cyclic}  \\
    Equivalently, there is a unique $\lambda \in E^*$ such that $H_{ij,\gamma} = \lambda_i - \lambda_j$. 
    \item For every admissible vector-field $X$: \label{it:first-order-div}
      \[
        \sum_{i<j} \int_{\Sigma_{ij}} \div_{\Sigma,\gamma} X^\tang\, d\gamma^{n-1} = 0.
      \]
    \item For every  admissible vector-field $X$: 
  \begin{equation}
    \delta_X A = \sum_{i<j} H_{ij,\gamma} \int_{\Sigma_{ij}} X^{\n_{ij}}\, d\gamma^{n-1} = \inr{\lambda}{\delta_X V} .
        \label{eq:formula-first-variation-of-area}
  \end{equation}   
  \label{it:first-order-lambda}
  \end{enumerate}
\end{theorem}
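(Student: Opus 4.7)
The plan is to prove the four parts in the order (i), (ii), then (iii) and (iv) together. In each case the engine is the stationarity lemma (Lemma 4.8) applied to a carefully chosen admissible vector-field, combined with the compensators of Lemma 4.5 and the first-variation formula of Proposition 3.10.

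For (i), fix $i<j$ and two points $x_1,x_2 \in \Sigma_{ij}$. By Theorem 4.1(iii) one can choose disjoint open balls $B_1,B_2$ about these points, each disjoint from every interface other than $\Sigma_{ij}$. Construct a compactly supported admissible $X$, supported in $B_1 \cup B_2$ and equal to $h_m \n_{ij}$ on $B_m \cap \Sigma_{ij}$, with smooth $h_m$ satisfying $\int_{\Sigma_{ij}} h_1\,d\gamma^{n-1} + \int_{\Sigma_{ij}} h_2\,d\gamma^{n-1} = 0$. Since $X^{\tang} \equiv 0$ on $\Sigma_{ij}$, Proposition 3.10 yields $\delta_X V = 0$ and
\[
\delta_X A = \int_{\Sigma_{ij}} H_{ij,\gamma}(h_1+h_2)\, d\gamma^{n-1}.
\]
Stationarity forces this integral to vanish; localizing $h_1,h_2$ near $x_1,x_2$ with opposite net masses gives $H_{ij,\gamma}(x_1)=H_{ij,\gamma}(x_2)$.

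For (ii), let $Y_1,\dots,Y_{k-1}$ be the compensators of Lemma 4.5, each normal to a single $\Sigma_{i_p j_p}$ with $\delta_{Y_p} V = \alpha_p(e_{i_p}-e_{j_p})$ spanning $E$. By (i), $\delta_{Y_p} A = H_{i_p j_p,\gamma}\,\alpha_p$. Since $\{e_{i_p}-e_{j_p}\}_{p=1}^{k-1}$ is a basis of $E$, define the unique $\lambda \in E^*$ by $\lambda(e_{i_p}-e_{j_p}) = H_{i_p j_p,\gamma}$. For any admissible $X$, choose $c_p$ with $\sum_p c_p \delta_{Y_p} V = \delta_X V$; applying stationarity to $X - \sum_p c_p Y_p$ gives
\[
\delta_X A = \sum_p c_p\, \delta_{Y_p} A = \sum_p c_p \alpha_p\, \lambda(e_{i_p}-e_{j_p}) = \langle \lambda, \delta_X V \rangle.
\]
Applying this identity to the localized normal vector-fields from step (i) for any other pair $(i',j')$ with $A_{i'j'}>0$ yields $H_{i'j',\gamma} = \lambda_{i'} - \lambda_{j'}$, proving both the cyclic identity and the uniqueness of $\lambda$.

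For (iii) and (iv), apply the second form of Proposition 3.10 to each cell $\Omega_i$. By Theorem 4.1(ii), $\partial^*\Omega_i$ equals $\bigcup_{j \neq i}\Sigma_{ij}$ up to an $\calH^{n-1}$-null set, and a direct orientation check shows that on each $\Sigma_{ij}$ the integrand $H_{\Sigma,\gamma} X^{\n} + \div_{\Sigma,\gamma} X^{\tang}$ is unchanged when one swaps between the two outward normals (both $H$ and $\n$ change sign). Summing over $i$ and dividing by $2$ gives
\[
\delta_X A = \sum_{i<j}\int_{\Sigma_{ij}} \bigl(H_{ij,\gamma} X^{\n_{ij}} + \div_{\Sigma,\gamma} X^{\tang}\bigr)\, d\gamma^{n-1}.
\]
Expanding $\delta_X V_i = \int_{\partial^*\Omega_i} X^{\n_{\Omega_i}}\,d\gamma^{n-1}$ per cell and contracting with $\lambda$ gives $\langle \lambda,\delta_X V\rangle = \sum_{i<j}(\lambda_i-\lambda_j)\int_{\Sigma_{ij}} X^{\n_{ij}}\,d\gamma^{n-1} = \sum_{i<j} H_{ij,\gamma}\int_{\Sigma_{ij}} X^{\n_{ij}}\,d\gamma^{n-1}$. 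This is the right equality in (iv); subtracting it from the previous display yields (iii). The main technical hurdle is the orientation bookkeeping when combining the per-cell first-variation formulas into a single identity on $\bigcup_{i<j}\Sigma_{ij}$, and verifying that the hypotheses of Proposition 3.10 apply to each cell — which is exactly the content of the standing perimeter-regularity assumption together with Theorem 4.1.
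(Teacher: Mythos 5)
Your proposal is correct and follows essentially the same route as the paper: localize normal perturbations plus stationarity for (i) and for the identification of $\lambda$, the compensators of Lemma \ref{lem:compensators} to reduce an arbitrary admissible $X$ to a volume-preserving variation, and the first-variation formulas of Proposition \ref{prop:first-variation} to convert the resulting identity into (iii) and (iv). The only difference is cosmetic: you derive $\delta_X A = \inr{\lambda}{\delta_X V}$ first and obtain (iii) by subtraction, whereas the paper establishes (iii) first and then reads off (iv); the two orderings are logically equivalent rearrangements of the same argument.
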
 

\begin{remark} \label{rem:no-higher-regularity}
The third point essentially says that at any point where the interfaces $\Sigma_{ij}$
meet, their boundary normals must sum to zero. In fact, more delicate boundary regularity results are known. In $\R^2$, the interfaces meet at $120^\circ$ angles at a discrete set of points - this was shown by F.~Morgan in \cite{MorganSoapBubblesInR2} building upon the work of Almgren \cite{AlmgrenMemoirs}, and also follows from the results of  J.~Taylor~\cite{Taylor-SoapBubbleRegularityInR3}. 
  The regularity results of Taylor also cover the case of $\R^3$, establishing that 
the interfaces must meet in threes
at $120^\circ$ angles along smooth curves, in turn meeting in fours at equal angles of $\cos^{-1}(-1/3) \simeq 109^\circ$. A generalization of this to $\Real^n$ for $n \geq 4$ has been announced by B.~White \cite{White-SoapBubbleRegularityInRn} and recently proved in \cite{CES-RegularityOfMinimalSurfacesNearCones}. We will not require these much more delicate boundary regularity results in this work. \end{remark}

\begin{proof}[Proof of Theorem~\ref{thm:first-order-conditions-expanded}]
We will sketch parts~\ref{it:first-order-constant} and~\ref{it:first-order-cyclic}, which are well-known, and provide a more detailed explanation of part~\ref{it:first-order-div}; part~\ref{it:first-order-lambda} is then an immediate consequence of the previous ones. 

Since $\H^{n-1}(\partial^* \Omega_i \setminus \cup_{j \neq i} \Sigma_{ij}) = 0$ by (\ref{eq:nothing-lost}), and since $\Sigma_{ij}$ are all smooth for a minimizing cluster $\Omega$ by Theorem \ref{thm:Almgren}, the smoothness assumption in the second part of Proposition \ref{prop:first-variation} is satisfied and we may appeal to the formulas for first variation of volume and perimeter derived there:
\begin{align*}
\delta_X V(\Omega_i) & = \sum_{j \neq i} \int_{\Sigma_{ij}} X^{\n_{ij}} \, d\gamma^{n-1} \;\;\; \forall i=1,\ldots,k ~, \\
\delta_X A(\Omega) & = \sum_{i < j} \int_{\Sigma_{ij}} (H_{\Sigma_{ij},\gamma} X^{\n_{ij}} + \div_{\Sigma_{ij},\gamma} X^\tang) \, d\gamma^{n-1} ~.
\end{align*}
Note that whenever the support of $X^{\tang}|_{\Sigma_{ij}}$ is contained in $\Sigma_{ij}$, we may integrate by parts to obtain:
\begin{equation} \label{eq:tang-zero}
\int_{\Sigma_{ij}} \div_{\Sigma_{ij},\gamma} X^\tang \, d\gamma^{n-1} = 0  . 
\end{equation}
 
To establish part~\ref{it:first-order-constant}, observe that if $H_{ij,\gamma}$ were not the same at two different points $x_1,x_2 \in \Sigma_{ij}$, we could find by Theorem \ref{thm:Almgren} an $\epsilon > 0$ so that $B(x_1,\epsilon)$, $B(x_2,\epsilon)$ and all the other interfaces are disjoint, $\gamma^{n-1}(B(x_p,\eps) \cap \Sigma_{ij}) > 0$, 
and $\cl(B(x_p,\eps) \cap \Sigma_{ij}) \subset \Sigma_{ij}$, for $p=1,2$. We could then construct a smooth vector-field $X$ supported in $B(x_1,\eps) \cup B(x_2,\eps)$ (so that (\ref{eq:tang-zero}) applies) with $\delta_X V(\Omega) = 0$ while $\delta_X A(\Omega) < 0$,  in violation of Lemma~\ref{lem:first-order}. 

To establish part~\ref{it:first-order-cyclic}, if $\sum_{(i, j) \in \cyclic} H_{ij,\gamma}$ were not zero for some $\cyclic \in \Trips$, we could construct a smooth vector-field compactly supported around three points (one in each $\Sigma_{ij}$) that would preserve weighted volume to first order, while decreasing weighted perimeter to first order, exactly as above, again in violation of Lemma~\ref{lem:first-order}. Clearly, this implies the existence of $\lambda \in \R^{k}$ so that $H_{ij,\gamma} = \lambda_i - \lambda_j$; as $\lambda$ is defined uniquely up to an additive constant, we may assume that $\sum_{i=1}^k \lambda_i = 0$, i.e. that $\lambda \in E^*$ (it will soon be clear that $\lambda$ acts on $E$). 

At this point, we have shown that for any admissible vector-field $X$: 
\begin{align}
\notag & \sum_{i<j} \int_{\Sigma_{ij}}  H_{ij,\gamma} X^{\n_{ij}}\, d\gamma^{n-1} = \sum_{i < j} (\lambda_i - \lambda_j) \int_{\Sigma_{ij}} X^{\n_{ij}} \, d\gamma^{n-1} \\
\notag  & = \sum_{i \neq j} \lambda_i \int_{\Sigma_{ij}} X^{\n_{ij}} \, d\gamma^{n-1} = \sum_{i} \lambda_i \sum_{j \neq i} \int_{\Sigma_{ij}} X^{\n_{ij}}\, d\gamma^{n-1} \\
\label{eq:inr-lambda}  &= \sum_{i} \lambda_i \delta_X V_i =  \inr{\lambda}{\delta_X V},
\end{align}
where we have used above that $\int_{\Sigma_{ij}} X^{\n_{ij}}\, d\gamma^{n-1} = - \int_{\Sigma_{ji}} X^{\n_{ji}}\, d\gamma^{n-1}$. 
For any such $X$, there exists by Lemma \ref{lem:compensators} a vector-field $Y = \sum_{i=1}^{k-1} c_i Y_i$ with $\delta_Y V(\Omega) = -\delta_X V(\Omega)$ so that $Y|_{\cup_{i<j} \Sigma_{ij}}$ is supported in $\cup_{i<j} \Sigma_{ij}$. In particular, we may integrate-by-parts and obtain:
 \begin{equation} \label{eq:lambda1}
 \int_{\Sigma_{ij}} \div_{\Sigma_{ij},\gamma} Y^\tang \, d\gamma^{n-1} = 0\;\;\; \forall i < j . 
 \end{equation}
 Since $\delta_{X+Y} V = 0$, it follows by (\ref{eq:inr-lambda}) that:
 \begin{equation} \label{eq:lambda2}
 \sum_{i<j}\int_{\Sigma_{ij}} H_{ij,\gamma}  (X^{\n_{ij}} + Y^{\n_{ij}}) \, d\gamma^{n-1} = \inr{\lambda}{\delta_{X+Y} V} = 0 . 
 \end{equation}
 In addition, Lemma \ref{lem:first-order} implies that:
 \begin{equation} \label{eq:lambda3}
 \delta_{X+Y} A = 0 . 
 \end{equation}
Combining (\ref{eq:lambda1}), (\ref{eq:lambda2}) and (\ref{eq:lambda3}), we obtain:
 \begin{align*}
0 & = \delta_{X + Y} A  \\
  &= \sum_{i<j} \int_{\Sigma_{ij}} \brac{H_{ij,\gamma} (X^{\n_{ij}} + Y^{\n_{ij}}) + \div_{\Sigma_{ij},\gamma} (X^\tang + Y^\tang)} \, d\gamma^{n-1} \\
  & = \sum_{i<j} \div_{\Sigma_{ij},\gamma} X^\tang  \, d\gamma^{n-1} ,
 \end{align*}
 concluding the proof of part~\ref{it:first-order-div}. 
 
 Part~\ref{it:first-order-lambda} follows immediately since by part~\ref{it:first-order-div} and (\ref{eq:inr-lambda}):
 \begin{align*}
 \delta_X A &= \sum_{i<j} \int_{\Sigma_{ij}} \brac{H_{ij,\gamma} X^{\n_{ij}} + \div_{\Sigma_{ij},\gamma} X^\tang}  \, d\gamma^{n-1} \\
 & = \sum_{i<j} \int_{\Sigma_{ij}}   H_{ij,\gamma} X^{\n_{ij}}  \, d\gamma^{n-1} = \inr{\lambda}{\delta_X V} . 
 \end{align*}
  
\end{proof}

\subsection{Stability}

Before concluding this section, we show that a minimizing cluster is necessarily stable.

\begin{definition}[Index Form $Q$] The Index Form $Q$, associated to a cluster satisfying the first-order conditions of Theorem \ref{thm:first-order-conditions-expanded}, is defined as the following quadratic form:
\[
Q(X) := \delta^2_X A - \scalar{\lambda,\delta^2_X V} .
\]
\end{definition}

\begin{lemma}[Stability] \label{lem:stable}
 Assume (\ref{eq:gamma-regular}) holds, and let $\Omega$ be an isoperimetric minimizing cluster.  Then for any admissible vector-field $X$: 
\[
\delta_X V = 0 \;\; \Rightarrow \;\; Q(X) \geq 0 .
\]
\end{lemma}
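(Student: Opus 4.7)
The plan is to mimic the proof of Lemma~\ref{lem:first-order}: flow along $X$, use the compensator vector-fields $Y_1,\dots,Y_{k-1}$ from Lemma~\ref{lem:compensators} to correct the higher-order drift in volume, and then extract the desired inequality from the fact that $A(t,s(t)) \geq A(0,0)$ along the volume-preserving corrected flow. One preliminary point: it suffices to consider the case $\gamma(\Omega) \in \interior \simplex$, since otherwise the problem reduces to a cluster with strictly fewer cells. Under this assumption, Lemma~\ref{lem:compensators} provides $C_c^\infty$ vector-fields $Y_1,\dots,Y_{k-1}$ with pairwise disjoint supports such that $\{\delta_{Y_i} V\}$ spans $E$, and we build the two-parameter family of diffeomorphisms $F_{t,s}$ exactly as in the proof of Lemma~\ref{lem:first-order}, so that $V(t,s) := \gamma(F_{t,s}(\Omega))$ and $A(t,s) := \per(F_{t,s}(\Omega))$ are $C^\infty$ on a neighborhood of the origin (by (\ref{eq:gamma-regular}) and the adaptation of Lemma~\ref{lem:regular}).

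Since $\{\partial_{s_i} V(0,0)\} = \{\delta_{Y_i} V\}$ spans $E$, the implicit function theorem produces a $C^\infty$ curve $s : (-\delta,\delta) \to \R^{k-1}$ with $s(0)=0$ and $V(t,s(t)) \equiv \gamma(\Omega)$. Differentiating this identity once at $t=0$ and using $\delta_X V = 0$ together with the linear independence of the $\delta_{Y_i} V$ in $E$ forces $s_i'(0) = 0$. Differentiating twice at $t=0$ and again using $s_i'(0)=0$ gives
\[
  \delta_X^2 V \;+\; \sum_{i=1}^{k-1} s_i''(0)\, \delta_{Y_i} V \;=\; 0.
\]

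The cluster $F_{t,s(t)}(\Omega)$ has the same weighted volume as $\Omega$, so by minimality $t \mapsto A(t,s(t))$ attains its minimum at $t=0$; consequently its second derivative there is non-negative. Using $s_i'(0)=0$, the chain rule yields
\[
  0 \;\le\; \left.\frac{d^2}{dt^2}\right|_{t=0} A(t,s(t)) \;=\; \delta_X^2 A \;+\; \sum_{i=1}^{k-1} s_i''(0)\, \delta_{Y_i} A.
\]
Now invoke part~\ref{it:first-order-lambda} of Theorem~\ref{thm:first-order-conditions-expanded}: $\delta_{Y_i} A = \inr{\lambda}{\delta_{Y_i} V}$. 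Substituting and using the previous identity for $\delta_X^2 V$ gives
\[
  \sum_{i=1}^{k-1} s_i''(0)\, \delta_{Y_i} A \;=\; \Bigl\langle \lambda, \sum_{i=1}^{k-1} s_i''(0)\, \delta_{Y_i} V \Bigr\rangle \;=\; -\inr{\lambda}{\delta_X^2 V},
\]
and hence $0 \le \delta_X^2 A - \inr{\lambda}{\delta_X^2 V} = Q(X)$, as required.

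The only potentially delicate point is justifying the $C^2$ smoothness of $V(t,s)$ and $A(t,s)$ jointly at the origin; this is the technical work that was already carried out for Lemma~\ref{lem:first-order} via (\ref{eq:gamma-regular}) and the Jacobian representations in Lemma~\ref{lem:regular}, adapted to the composed flow (\ref{eq:concat}). Once that is in hand, the argument is a textbook Lagrange-multiplier-style second-order comparison, and the identification of the quadratic correction with $-\inr{\lambda}{\delta_X^2 V}$ through the compensators is the key conceptual step.
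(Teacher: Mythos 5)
Your proof is correct and follows essentially the same route as the paper: flow along $X$, correct with the compensators $Y_1,\dots,Y_{k-1}$ via the implicit function theorem, use $s'(0)=0$, differentiate the volume constraint twice, and convert $\sum_i s_i''(0)\,\delta_{Y_i}A$ into $-\inr{\lambda}{\delta_X^2 V}$ using the first-variation formula, concluding from minimality that the second derivative of $t\mapsto A(t,s(t))$ is non-negative at $t=0$. The only cosmetic difference is that you explicitly flag the reduction to $\gamma(\Omega)\in\interior\simplex$ (needed for Lemma~\ref{lem:compensators}), which the paper leaves implicit.
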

Again, this is well-known and standard for compactly supported vector-fields in the single-bubble case, and was proved in \cite{DoubleBubbleInR3} assuming higher-order boundary regularity, which we avoid in this work. Consequently, for completeness, we provide a proof. 

\begin{proof}[Proof of Lemma \ref{lem:stable}]
    Let $Y_1, \dots, Y_{k-1}$, $F_{t,s}$, and $s(t)$ be as in the proof of Lemma~\ref{lem:first-order}. Recall that $\delta_X V = 0$ implies that $s'(0)=0$.
    By the chain rule, it follows (using $s'(0)=0$) that:
    \begin{align*}
                \left.\frac{d^2 A(t,s(t))}{(dt)^2} \right|_{t=0}
        & = \left.\frac{\partial^2 A}{(\partial t)^2}\right|_{t=0,s=0}  + \sum_{i=1}^{k-1} s_i''(0) \pdiffat{A}{s_i}{t=0,s=0} \\
        &= \delta_X^2 A + \sum_{i=1}^{k-1} s_i''(0) \delta_{Y_i} A \\
        &= \delta_X^2 A + \sum_{i=1}^{k-1} s_i''(0) \inr{\lambda}{\delta_{Y_i} V},
    \end{align*}
    where the last equality follows from (\ref{eq:formula-first-variation-of-area}). 
    Differentiating the relation $V(0,0) = V(t,s(t))$ twice in $t$ (and using again that $s'(0) = 0$),
    we obtain:
    \[
    0 = \left.\frac{\partial^2 V}{(\partial t)^2}\right|_{t=0,s=0} + \sum_i s_i''(0) \pdiffat{V}{s_i}{t=0,s=0} = \delta_X^2 V + \sum_i s_i''(0) \delta_{Y_i} V .
    \]
    Hence,
    \[
        \left.\frac{d^2 A(t,s(t))}{(dt)^2} \right|_{t=0}
        = \delta_X^2 A - \inr{\lambda}{\delta_X^2 V} = Q(X).
    \]
    It follows that necessarily $Q(X) \ge 0$, since otherwise, recalling that $\left.\frac{dA(t,s(t))}{dt} \right|_{t=0} = 0$ by Lemma~\ref{lem:first-order},
    we would find some $t \ne 0$ so that the cluster $\tilde \Omega = F_{t,s(t)}(\Omega)$ satisfies $\gamma(\tilde \Omega) = \gamma(\Omega)$
    and $\per(\tilde \Omega) = A(t,s(t)) < A(0,0) = \per(\Omega)$, contradicting the minimality of $\Omega$.
\end{proof}

\section{Second Variations} \label{sec:second-order}

Unlike the formulas for first variation above, the following identity for the second
variation under translations appears to be new, and moreover, is particular to the Gaussian measure. 

\begin{theorem}\label{thm:formula}
  If $\Omega$ is an isoperimetric minimizing cluster for the Gaussian measure $\gamma$ then for any $w \in \R^n$,
    \begin{equation}
        Q(w) = \delta_w^2 A - \inr{\lambda}{\delta_w^2 V} = - \sum_{i < j} \int_{\Sigma_{ij}} \inr{w}{\n_{ij}}^2\, d\gamma^{n-1}.
            \label{eq:formula}
    \end{equation}
\end{theorem}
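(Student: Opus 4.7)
My plan is to compute $\delta_w^2 V$ and $\delta_w^2 A$ directly from the fact that the flow along the constant vector-field $X\equiv w$ is the translation $F_t(x)=x+tw$, and then to rewrite $Q(w)$ in boundary form using a surface-divergence identity that exploits the specific Gaussian structure $\nabla W(x)=x$. Since $JF_t\equiv 1$ and $J\Phi_t\equiv 1$ for translations, and $W(x+tw)=\abs{x}^2/2+t\inr{x}{w}+t^2\abs{w}^2/2$, Lemma~\ref{lem:regular} together with Corollary~\ref{cor:Gaussian-regular} immediately give
\[ \delta_w^2 V_i = \int_{\Omega_i}\brac{\inr{x}{w}^2-\abs{w}^2}\, d\gamma,\qquad \delta_w^2 A = \sum_{i<j}\int_{\Sigma_{ij}}\brac{\inr{x}{w}^2-\abs{w}^2}\, d\gamma^{n-1}. \]
Using the Gaussian identity $\inr{x}{w}^2-\abs{w}^2 = -\div_\gamma(\inr{x}{w}w)$ and Gauss--Green applied cellwise, justified by Lemma~\ref{lem:cutoff} since $\inr{x}{w}w$ is polynomially bounded in $\abs{\nabla W}$, I rewrite $\delta_w^2 V_i$ as a boundary integral. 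Pairing with $\lambda$, using $\n_{ji}=-\n_{ij}$ on the shared interface, and substituting $H_{ij,\gamma}=\lambda_i-\lambda_j$ from Theorem~\ref{thm:first-order-conditions-expanded}(\ref{it:first-order-cyclic}), this yields
\[ \inr{\lambda}{\delta_w^2 V} = -\sum_{i<j} H_{ij,\gamma}\int_{\Sigma_{ij}} \inr{x}{w}\inr{w}{\n_{ij}}\, d\gamma^{n-1}. \]

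The heart of the argument is a single surface-divergence identity on each $\Sigma_{ij}$. From $\nabla_\Sigma \inr{x}{w}=w^\tang$, the fact that $\div_\Sigma w=0$ for the constant field $w$ (which forces $\div_\Sigma w^\tang=-\inr{w}{\n}H_\Sigma$), the Pythagorean identity $\abs{w^\tang}^2=\abs{w}^2-\inr{w}{\n}^2$, and the Gaussian-specific relation $H_{\Sigma,\gamma}=H_\Sigma-\inr{x}{\n}$, a direct product-rule calculation gives
\[ \div_{\Sigma_{ij},\gamma}\brac{\inr{x}{w}w^\tang} = -\brac{\inr{x}{w}^2-\abs{w}^2 + H_{ij,\gamma}\inr{x}{w}\inr{w}{\n_{ij}} + \inr{w}{\n_{ij}}^2}. \]
Assembling the three ingredients above,
\[ Q(w)+\sum_{i<j}\int_{\Sigma_{ij}}\inr{w}{\n_{ij}}^2\, d\gamma^{n-1} = -\sum_{i<j}\int_{\Sigma_{ij}}\div_{\Sigma_{ij},\gamma}\brac{\inr{x}{w}w^\tang}\, d\gamma^{n-1}, \]
and the theorem reduces to showing the right-hand side vanishes. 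This is precisely the content of Theorem~\ref{thm:first-order-conditions-expanded}(\ref{it:first-order-div}) applied to $X=\inr{x}{w}w$, whose tangential part on $\Sigma_{ij}$ is $\inr{x}{w}w^\tang$.

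The principal obstacle is that $X=\inr{x}{w}w$ is \emph{not} admissible -- it is unbounded -- so Theorem~\ref{thm:first-order-conditions-expanded}(\ref{it:first-order-div}) does not apply out of the box. I would bypass this by applying the theorem to the admissible truncation $\eta_R X$ and passing to $R\to\infty$ via dominated convergence. The limiting integrand $\div_{\Sigma_{ij},\gamma}(\inr{x}{w}w^\tang)$ is dominated, up to constants depending only on $w$ and $\lambda$, by a fixed polynomial in $\abs{x}$, which is $\gamma^{n-1}$-integrable on each interface thanks to the uniform perimeter regularity of Gaussian minimizers (Corollary~\ref{cor:Gaussian-regular}); the boundary-layer remainder produced by $\nabla\eta_R$ is controlled similarly and vanishes in the limit. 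The analogous justification for differentiating $t\mapsto\gamma(F_t(\Omega))$ and $t\mapsto\per(F_t(\Omega))$ twice under the integral sign, despite $X$ being unbounded, is handled by the same uniformly-regular instance of Lemma~\ref{lem:regular}.
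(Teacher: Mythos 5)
Your proposal is correct and follows essentially the same route as the paper: compute $\delta_w^2V$ and $\delta_w^2A$ by differentiating under the integral (Lemma~\ref{lem:regular}, Corollary~\ref{cor:Gaussian-regular}), convert the bulk term to a boundary term via Gauss--Green with the cutoff Lemma~\ref{lem:cutoff}, and kill the tangential surface-divergence contribution by applying Theorem~\ref{thm:first-order-conditions-expanded}(\ref{it:first-order-div}) to the truncation $\eta_R\inr{x}{w}w$ of the same (unbounded) vector-field $w\nabla_w\pot$ that the paper uses. The only difference is cosmetic: the paper keeps a general potential $\pot$ through two separate lemmas and splits $w\nabla_w\pot$ into tangential and normal parts, whereas you specialize to $\nabla\pot(x)=x$ at the outset and package the cancellation into one surface-divergence identity; both verifications check out.
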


Here  $\delta_w$ denotes the variation for the constant vector-field $X \equiv w \in \R^n$. Recall that given an isoperimetric minimizing cluster, $\lambda \in E^*$ denotes the unique vector guaranteed by Theorem \ref{thm:first-order-conditions-expanded} to satisfy  $\lambda_i - \lambda_j = H_{ij,\gamma}$. We continue to treat the case of a $k$-cell cluster, $k \geq 3$, as this poses no additional effort.

\subsection{Second variation of volume}

\begin{lemma} \label{lem:delta2-V}
Let $\Omega$ be an isoperimetric minimizing cluster for a measure $\gamma = e^{-\pot} dx$ satisfying (\ref{eq:gamma-regular}). Then for any $i=1,\ldots,k$ and $w \in \R^n$:
\begin{equation} \label{eq:delta2-V-cell}
\delta_w^2 V(\Omega_i) = - \sum_{j \neq i}\int_{\Sigma_{ij}} \inr{w}{\n_{ij}} \nabla_w \pot \, d\gamma^{n-1} .
\end{equation}
   In particular:
   \[
    \inr{\lambda}{\delta_w^2 V} = - \sum_{i < j} H_{ij,\gamma} \int_{\Sigma_{ij}} \inr{w}{\n_{ij}} \nabla_w \pot \, d\gamma^{n-1} .
    \]
\end{lemma}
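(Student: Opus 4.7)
The plan is to exploit the fact that for the constant vector field $X \equiv w$, the associated flow is pure translation $F_t(x) = x + tw$, so its Jacobian satisfies $J F_t \equiv 1$. Combined with Lemma \ref{lem:regular} (whose hypotheses are met by Corollary \ref{cor:Gaussian-regular}), this collapses the second variation of weighted volume to a straightforward double $t$-derivative of $e^{-W(x+tw)}$ under the integral sign:
\[
\delta_w^2 V(\Omega_i) = \int_{\Omega_i} \left.\frac{d^2}{dt^2}\right|_{t=0} e^{-W(x+tw)}\, dx = \int_{\Omega_i} \bigl((\nabla_w W)^2 - \nabla_w^2 W\bigr)\, d\gamma,
\]
where $\nabla_w^2 W$ denotes the pure second directional derivative $\nabla^2 W(w,w)$.

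The crux is to recognize this integrand as a weighted divergence. I would introduce the auxiliary vector field $Y := (\nabla_w W)\,w$ and compute $\div Y = \nabla_w(\nabla_w W) = \nabla_w^2 W$ and $\nabla_Y W = (\nabla_w W)^2$, so that $\div_\gamma Y = \nabla_w^2 W - (\nabla_w W)^2$. Thus the integrand equals $-\div_\gamma Y$. To convert this volume integral into a boundary integral via (\ref{eq:integration-by-parts}), I would truncate by the cutoff $\eta_R$: the field $Y$ is not compactly supported, but it satisfies the polynomial bound $|Y|,|\nabla Y| \leq P(|\nabla W|, \|\nabla^2 W\|)$ needed to apply Lemma \ref{lem:cutoff} (here the Gaussian volume regularity of $\Omega_i$, from Corollary \ref{cor:Gaussian-regular}, takes care of the limit). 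This yields
\[
\delta_w^2 V(\Omega_i) = -\int_{\partial^* \Omega_i} (\nabla_w W)\,\langle w, \n\rangle\, d\gamma^{n-1}.
\]
Invoking (\ref{eq:nothing-lost}) to decompose $\partial^* \Omega_i$ as the disjoint union of the interfaces $\Sigma_{ij}$ (up to an $\H^{n-1}$-null set) delivers (\ref{eq:delta2-V-cell}).

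For the consequence, I would sum against $\lambda$, writing $\langle \lambda, \delta_w^2 V\rangle = \sum_i \lambda_i \delta_w^2 V(\Omega_i) = -\sum_{i\neq j} \lambda_i \int_{\Sigma_{ij}} (\nabla_w W)\langle w, \n_{ij}\rangle\, d\gamma^{n-1}$, and then pair each ordered pair $(i,j)$ with $(j,i)$; using the antisymmetry $\n_{ji} = -\n_{ij}$ combines the two contributions to yield $(\lambda_i - \lambda_j) = H_{ij,\gamma}$ times the integral over $\Sigma_{ij}$, as provided by Theorem \ref{thm:first-order-conditions-expanded}\eqref{it:first-order-cyclic}. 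I do not anticipate any serious obstacle: the only delicate point is the integration-by-parts step with a non-compactly-supported field of polynomial growth, which is exactly the scenario handled by Lemma \ref{lem:cutoff}, and the rest is bookkeeping.
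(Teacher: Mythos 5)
Your proof is correct and follows essentially the same route as the paper: recognize the integrand $(\nabla_w \pot)^2 - \nabla^2_{w,w}\pot$ as $-\div_\gamma\brac{(\nabla_w \pot)\,w}$, truncate with $\eta_R$ via Lemma \ref{lem:cutoff}, integrate by parts using (\ref{eq:integration-by-parts}), and pair $(i,j)$ with $(j,i)$ for the $\lambda$-identity. One small mislabel: the final passage to the limit in the boundary integral $\int_{\partial^*\Omega_i}\eta_R\,\nabla_w\pot\,\inr{w}{\n}\,d\gamma^{n-1}$ is justified by the \emph{perimeter} regularity of $\Omega_i$ (also guaranteed by (\ref{eq:gamma-regular})), not by volume regularity.
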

\begin{proof}
By (\ref{eq:gamma-regular}) $\Omega_i$ is volume regular, and so by Lemma \ref{lem:regular} we have:
\begin{align*}
  \delta_w^2 V(\Omega_i)
  &= \int_{\Omega_i} \diffIIat{e^{-\pot(x + tw)}}{t}{t}{t=0}\, dx \\
  &= \int_{\Omega_i} (\nabla_w \pot)^2 - \nabla^2_{w,w} \pot\, d\gamma \\
  & = - \int_{\Omega_i} \div_{\gamma} (w \nabla_w \pot) \, d\gamma . 
\end{align*}
  In order to apply integration-by-parts, we need to first make $X = w \nabla_w \pot$ compactly supported. 
Noting that $\abs{X} \leq \abs{w}^2 \abs{\nabla \pot}$ and $\abs{\div X} = \abs{\nabla^2_{w,w} \pot} \leq \abs{w}^2 \norm{\nabla^2 \pot}$, we may apply Lemma \ref{lem:cutoff} to approximate $X$ by the compactly-supported $\eta_R X$. Integrating by parts using ~\eqref{eq:integration-by-parts}, we obtain:
\begin{align*}
  &= - \lim_{R \rightarrow \infty} \int_{\Omega_i} \div_\gamma(\eta_R w \nabla_w \pot)\, d\gamma \\
  &= -\lim_{R \rightarrow \infty} \int_{\partial^* \Omega_i} \eta_R \inr{w}{\n} \nabla_w \pot \, d\gamma^{n-1} \\
  & = - \int_{\partial^* \Omega_i} \inr{w}{\n} \nabla_w \pot \, d\gamma^{n-1}  ,
\end{align*}
where the last equality follows by Dominant Convergence since $\Omega_i$ is perimeter regular by (\ref{eq:gamma-regular}). Since $\H^{n-1}(\partial^* U_i  \setminus \cup_{j \neq i} \Sigma_{ij}) = 0$, (\ref{eq:delta2-V-cell}) follows. 

Finally, since swapping $i$ and $j$ changes the sign of $\inr{w}{\n_{ij}}$, we have:
\begin{align*} 
    \inr{\lambda}{\delta_w^2 V}
        & = - \sum_{i=1}^k \lambda_i \sum_{j \ne i} \int_{\Sigma_{ij}} \inr{w}{\n_{ij}} \nabla_w \pot \, d\gamma^{n-1} \\    
    &= - \sum_{i < j} (\lambda_i - \lambda_j)\int_{\Sigma_{ij}} \inr{w}{\n_{ij}} \nabla_w \pot \, d\gamma^{n-1} \notag \\
    &= - \sum_{i < j} H_{ij,\gamma} \int_{\Sigma_{ij}} \inr{w}{\n_{ij}} \nabla_w \pot \, d\gamma^{n-1}.
\end{align*}
\end{proof}

\subsection{Second variation of perimeter}

\begin{lemma}
Let $\Omega$ be an isoperimetric minimizing cluster for a measure $\gamma = e^{-\pot} dx$ satisfying (\ref{eq:gamma-regular}). Then for any $w \in \R^n$:
\[
\delta_w^2 A = -  \sum_{i < j} \int_{\Sigma_{ij}} \brac{ H_{ij,\gamma} \scalar{w,\n_{ij}} \nabla_w \pot  + \scalar{w, \n_{ij}} \nabla^2_{\n_{ij},w} \pot } d\gamma^{n-1} .
\]
\end{lemma}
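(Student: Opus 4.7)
The plan is to compute $\delta_w^2 A$ by direct differentiation of the perimeter integral, and then rewrite the resulting integrand on each interface as the $\gamma$-weighted Euclidean divergence of a natural vector-field, finally cancelling the tangential terms by means of the first-order conditions in Theorem~\ref{thm:first-order-conditions-expanded}. Since $F_t(x) = x + tw$ is a rigid translation, both $JF_t$ and $J\Phi_t$ are identically $1$. Applying Lemma~\ref{lem:regular}(2) to each cell (which is perimeter regular by (\ref{eq:gamma-regular})) and summing over pairs $i<j$ yields
\[
\delta_w^2 A = \sum_{i<j} \int_{\Sigma_{ij}} \left.\frac{d^2}{dt^2}\right|_{t=0} e^{-\pot(x+tw)} \, d\H^{n-1}(x) = \sum_{i<j} \int_{\Sigma_{ij}} \bigl[(\nabla_w \pot)^2 - \nabla^2_{w,w} \pot\bigr] d\gamma^{n-1}.
\]

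Next I would introduce the vector-field $Y := w\,\nabla_w \pot$ on $\R^n$. Since $w$ is constant, $\div Y = \nabla^2_{w,w}\pot$ and $\nabla_Y \pot = (\nabla_w \pot)^2$, so the integrand above equals $-\div_\gamma Y$. On each smooth interface $\Sigma_{ij}$, co-oriented by $\n = \n_{ij}$, I would use the pointwise splitting $\div_\gamma Y = \div_{\Sigma,\gamma} Y + \inr{\n}{\nabla_\n Y}$ together with $\nabla_\n Y = w\,\nabla^2_{\n,w}\pot$ and the decomposition $Y = Y^\n \n + Y^\tang$ where $Y^\n = \inr{w}{\n}\nabla_w \pot$. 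Combining with the standard identity $\div_{\Sigma,\gamma}(f\n) = f\,H_{\Sigma,\gamma}$ gives the pointwise relation
\[
\div_\gamma Y = H_{ij,\gamma}\,\inr{w}{\n_{ij}}\nabla_w \pot + \div_{\Sigma_{ij},\gamma} Y^\tang + \inr{w}{\n_{ij}}\nabla^2_{\n_{ij},w} \pot.
\]
Substituting into the previous display and summing over $i<j$ reduces the claim to the vanishing
\[
\sum_{i<j} \int_{\Sigma_{ij}} \div_{\Sigma_{ij},\gamma} Y^\tang\, d\gamma^{n-1} = 0.
\]

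The main obstacle is justifying this last identity. For admissible vector-fields it is precisely Theorem~\ref{thm:first-order-conditions-expanded}(iii), but $Y = w\,\nabla_w \pot$ is not admissible in general (for the Gaussian, $|Y|$ grows linearly at infinity). My plan is to truncate: the field $\eta_R Y$ is smooth and compactly supported, hence admissible, so Theorem~\ref{thm:first-order-conditions-expanded}(iii) applied to $\eta_R Y$, using that $(\eta_R Y)^\tang = \eta_R Y^\tang$, gives
\[
\sum_{i<j} \int_{\Sigma_{ij}} \div_{\Sigma_{ij},\gamma} (\eta_R Y^\tang)\, d\gamma^{n-1} = 0 \quad \text{for every } R>0.
\]
On each interface I would then pass $R \to \infty$ by the cutoff argument of Lemma~\ref{lem:cutoff}(2): the uniform perimeter regularity assumption (\ref{eq:gamma-regular}) bounds the $\gamma^{n-1}$-integrals on $\partial^*\Omega_i$ of arbitrary polynomial expressions in the derivatives of $\pot$, which dominates both the cutoff error $\inr{\nabla\eta_R}{Y^\tang}$ and the tail $(1-\eta_R)\div_{\Sigma,\gamma}Y^\tang$. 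This yields the desired vanishing and completes the proof of the stated formula.
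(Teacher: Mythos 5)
Your argument is correct and follows essentially the same route as the paper: differentiate under the integral using perimeter regularity to get $\int ((\nabla_w \pot)^2 - \nabla^2_{w,w}\pot)$, recognize this as $-\div_\gamma(w\nabla_w\pot)$ and split off the normal contributions $H_{ij,\gamma}\inr{w}{\n_{ij}}\nabla_w\pot + \inr{w}{\n_{ij}}\nabla^2_{\n_{ij},w}\pot$, then kill the tangential surface-divergence term via Theorem~\ref{thm:first-order-conditions-expanded}(iii) applied to the cutoff field together with the limit argument of Lemma~\ref{lem:cutoff}(2). The only (cosmetic) difference is that the paper truncates the full field $w\nabla_w\pot$ \emph{before} decomposing it into normal and tangential parts, which spares the small extra verification your ordering requires — namely that $\div_{\Sigma_{ij},\gamma}Y^\tang$ is dominated by a polynomial in the derivatives of $\pot$, which holds because $H_{\Sigma_{ij}} = H_{ij,\gamma} + \nabla_{\n_{ij}}\pot$ with $H_{ij,\gamma}$ constant on each interface.
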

\begin{proof}
By (\ref{eq:gamma-regular}) $\Omega_i$ is perimeter regular, and so by Lemma \ref{lem:regular} we have for every $i$:
\begin{align*}
  \delta_w^2 A(\Omega_i)
&  = \int_{\partial^* \Omega_i} \diffIIat{e^{-\pot(x + tw)}}{t}{t}{t=0}\, d\calH^{n-1}(x) \\
&  = \int_{\partial^* \Omega_i} \brac{(\nabla_w \pot)^2 - \nabla^2_{w,w} \pot} d\gamma^{n-1} \\
&  = \sum_{j \neq i} \int_{\Sigma_{ij}} \brac{(\nabla_w \pot)^2 - \nabla^2_{w,w} \pot} d\gamma^{n-1}  . 
\end{align*}
On the other hand, we now have:
\begin{equation}\label{eq:div-constant}
  \div_{\Sigma_{ij},\gamma} (w \nabla_w \pot)
  = \div_{\Sigma_{ij}} (w \nabla_w \pot) - (\nabla_w \pot)^2
  = \nabla^2_{w^\tang,w} \pot - (\nabla_w \pot)^2,
\end{equation}
where $w^\tang$ is the tangential part of $w$. Consequently, we obtain:
\[
 \delta_w^2 A(\Omega_i) = - \sum_{j \neq i} \int_{\Sigma_{ij}} \brac{\div_{\Sigma_{ij},\gamma} (w \nabla_w \pot) + \nabla^2_{w,w} \pot - \nabla^2_{w^\tang,w} \pot} d\gamma^{n-1} . 
 \]
Summing over $i$ and dividing by $2$, we obtain:
\[
\delta_w^2 A = - \sum_{i < j} \int_{\Sigma_{ij}} \brac{\div_{\Sigma_{ij},\gamma} (w \nabla_w \pot) + \scalar{w, \n_{ij}} \nabla^2_{\n_{ij},w} \pot  } d\gamma^{n-1} . 
\]

We now claim that the contribution of the tangential part of divergence terms vanishes. This is essentially the content of Theorem~\ref{thm:first-order-conditions-expanded} part~\ref{it:first-order-div}, applied to the vector-field $X = w \nabla_w \pot$; however, $X$ is not bounded and so does not satisfy the required assumption (\ref{eq:field-bdd}). To remedy this, note as before that $\abs{X} \leq \abs{w}^2 \abs{\nabla \pot}$ and $\abs{\div_\Sigma X} = \abs{\nabla^2_{w^{\tang},w} \pot} \leq \abs{w}^2 \norm{\nabla^2 \pot}$, and so we may apply Lemma \ref{lem:cutoff} to approximate $X$ by the compactly-supported $\eta_R X$. Now applying Theorem~\ref{thm:first-order-conditions-expanded} part~\ref{it:first-order-div} to $\eta_R X$, we deduce:
\begin{align*}
\delta_w^2 A  & = - \lim_{R \rightarrow \infty} \sum_{i < j} \int_{\Sigma_{ij}} \brac{\div_{\Sigma_{ij},\gamma} (\eta_R w \nabla_w \pot) + \scalar{w, \n_{ij}} \nabla^2_{\n_{ij},w} \pot } d\gamma^{n-1} \\
& =  - \lim_{R \rightarrow \infty} \sum_{i < j} \int_{\Sigma_{ij}} \brac{\div_{\Sigma_{ij},\gamma} (\eta_R \n_{ij} \scalar{w,\n_{ij}} \nabla_w \pot) + \scalar{w, \n_{ij}} \nabla^2_{\n_{ij},w} \pot } d\gamma^{n-1} \\
& =  - \lim_{R \rightarrow \infty} \sum_{i < j} \int_{\Sigma_{ij}} \brac{ \eta_R H_{ij,\gamma} \scalar{w,\n_{ij}} \nabla_w \pot  + \scalar{w, \n_{ij}} \nabla^2_{\n_{ij},w} \pot } d\gamma^{n-1} \\
& = -  \sum_{i < j} \int_{\Sigma_{ij}} \brac{ H_{ij,\gamma} \scalar{w,\n_{ij}} \nabla_w \pot  + \scalar{w, \n_{ij}} \nabla^2_{\n_{ij},w} \pot } d\gamma^{n-1} ,
\end{align*}
where the last equality follows by Dominant Convergence since the cells $\Omega_i$ are perimeter regular by (\ref{eq:gamma-regular}). 
\end{proof}

\begin{corollary} \label{cor:delta2-A}
For $\gamma$ the standard Gaussian measure we have:
\[
\delta_w^2 A  = -  \sum_{i < j} \int_{\Sigma_{ij}} \brac{ H_{ij,\gamma} \scalar{w,\n_{ij}} \nabla_w \pot  +  \inr{w}{\n_{ij}}^2 } d\gamma^{n-1} .
\]
\end{corollary}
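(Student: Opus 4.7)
The plan is to invoke the general formula for $\delta_w^2 A$ established in the preceding lemma and specialize it to the Gaussian potential. Recall that for the standard Gaussian measure, $\pot(x) = \tfrac{1}{2}\abs{x}^2 + \tfrac{n}{2}\log(2\pi)$, so that $\nabla \pot(x) = x$ and $\nabla^2 \pot = \mathrm{Id}$ identically on $\R^n$.

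In particular, the assumption (\ref{eq:gamma-regular}) is satisfied by Corollary \ref{cor:Gaussian-regular}, so the preceding lemma applies and yields
\[
\delta_w^2 A  = -  \sum_{i < j} \int_{\Sigma_{ij}} \brac{ H_{ij,\gamma} \scalar{w,\n_{ij}} \nabla_w \pot  + \scalar{w, \n_{ij}} \nabla^2_{\n_{ij},w} \pot } d\gamma^{n-1}.
\]
Using $\nabla^2 \pot = \mathrm{Id}$, the quadratic Hessian term simplifies to
\[
\nabla^2_{\n_{ij},w} \pot = \inr{\n_{ij}}{w},
\]
and therefore
\[
\scalar{w,\n_{ij}} \nabla^2_{\n_{ij},w} \pot = \inr{w}{\n_{ij}}^2.
\]
Substituting this identity into the displayed formula above gives the claimed expression.

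There is no real obstacle: the corollary is a direct specialization, and the only thing to check is that the Gaussian satisfies the hypotheses of the ambient lemma, which is exactly what Corollary \ref{cor:Gaussian-regular} provides.
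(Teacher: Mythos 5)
Your proof is correct and is essentially identical to the paper's (one-line) proof: apply Corollary \ref{cor:Gaussian-regular} to verify (\ref{eq:gamma-regular}), then substitute $\nabla^2 \pot = \mathrm{Id}$ into the general second-variation formula of the preceding lemma. Your write-up merely spells out the same specialization in more detail.
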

\begin{proof}
Apply Corollary \ref{cor:Gaussian-regular} and use $\nabla^2 W = \text{Id}$. 
\end{proof}

\begin{proof}[Proof of Theorem \ref{thm:formula}]
Immediate from combining Lemma \ref{lem:delta2-V} and Corollary \ref{cor:delta2-A}.
\end{proof}

\section{The differential inequality} \label{sec:MDI}

From hereon, we specialize to the case of the standard Gaussian measure $\gamma$ on $\R^n$ and 3-cell clusters (the double-bubble problem).  
In this section, we prove a rigorous version of the inequality $\nabla^2 I \le -L_A^{-1}$ (in view of the fact that we don't yet know
$I$ to be differentiable). 

\begin{theorem}[Main Differential Inequality] \label{thm:hessian-bound-for-I}
    Fix $v \in \interior \simplex$. Let $\Omega$ be an isoperimetric minimizing cluster
    with $\gamma(\Omega) = v$, and let $A_{ij} = \gamma^{n-1}(\Sigma_{ij})$, where
    $\Sigma_{ij}$ are the interfaces of $\Omega$.
    Then for any $y \in E$, there exists an admissible vector-field $X$ such that:
    \[
     \delta_X V = y \;\; \text{ and } \;\; Q(X) \le - y^T L_A^{-1} y ,
     \]
     where:
     \[
      L_{A} := \sum_{i < j} A_{ij} (e_i - e_j) (e_i-e_j)^T .
     \]
\end{theorem}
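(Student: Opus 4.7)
My approach is to specialize the variational inequality to translations $X \equiv w \in \R^n$, where Theorem~\ref{thm:formula} supplies an exceptionally clean formula for $Q(w)$, and to deduce the matrix inequality via a Cauchy--Schwarz argument in an auxiliary Hilbert space. A degenerate ``effectively one-dimensional'' case has to be treated separately.

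\paragraph{Hilbert-space setup.} For a constant vector-field $X\equiv w\in\R^n$, combining~(\ref{eq:formula-first-variation-of-volume}) with $\n_{ji}=-\n_{ij}$ gives
\[
  \delta_w V \;=\; \sum_{i<j}(e_i-e_j)\int_{\Sigma_{ij}}\inr{w}{\n_{ij}}\, d\gamma^{n-1}\in E,
\]
while Theorem~\ref{thm:formula} yields $Q(w) = -\sum_{i<j}\int_{\Sigma_{ij}}\inr{w}{\n_{ij}}^2\, d\gamma^{n-1}$. Introduce the Hilbert space $H := \bigoplus_{i<j} L^2(\Sigma_{ij}, \gamma^{n-1})$ with the linear maps $\phi\colon \R^n \to H$, $\phi(w)_{ij}(x) := \inr{w}{\n_{ij}(x)}$, and $\psi\colon H \to E$, $\psi(f) := \sum_{i<j}(e_i-e_j)\int_{\Sigma_{ij}} f_{ij}\, d\gamma^{n-1}$. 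Then $\delta_w V = \psi(\phi(w))$ and $-Q(w) = \|\phi(w)\|_H^2$. A direct computation identifies the adjoint as $\psi^*(z)_{ij}(x) = \inr{z}{e_i-e_j}$, so that $\psi\psi^* = L_A$ on $E$, which is invertible by Corollary~\ref{cor:pos-def}.

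\paragraph{Generic case via Cauchy--Schwarz.} For any $f\in H$ with $\psi(f)=y$, the orthogonal decomposition $H = \ker\psi \oplus \mathrm{Image}(\psi^*)$ gives $\|f\|_H^2 \geq \|\psi^* L_A^{-1} y\|_H^2 = y^T L_A^{-1} y$, with equality when $f = \psi^* L_A^{-1}y$ is constant along each interface. Specializing to $f=\phi(w)$: whenever $\delta_w V = y$, we obtain
\[
  Q(w) \;=\; -\|\phi(w)\|_H^2 \;\leq\; -y^T L_A^{-1} y.
\]
Hence, if the composed translation map $T := \psi \circ \phi\colon \R^n \to E$ is surjective, we simply choose $w \in \R^n$ with $T(w)=y$ and take $X \equiv w$ to finish the proof.

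\paragraph{Degenerate case and main obstacle.} The real difficulty is when $T$ fails to be surjective onto $E$. Here I would exploit stability (Lemma~\ref{lem:stable}): for $0 \ne w \in \ker T$, $\delta_w V = 0$ forces $0 \le Q(w) = -\|\phi(w)\|_H^2$, whence $\inr{w}{\n_{ij}}\equiv 0$ on every $\Sigma_{ij}$. Consequently every interface is invariant under translation by $w$, and $\Omega$ splits as a product of a lower-dimensional cluster in $w^\perp$ with $\R w$. Iterating, one reduces to an essentially one-dimensional minimizer $\Omega = \Omega^{(1)} \times \R^{n-1}$, where $\Omega^{(1)}$ is of the form $(-\infty, a) \cup (a, b) \cup (b, \infty)$; there $\Sigma_{13} = \emptyset$, $L_A$ is determined by the two remaining interface weights $A_{12}$ and $A_{23}$, and the required matrix inequality follows from two decoupled applications of the one-dimensional ODE $(I_m^{(1)})'' = -1/I_m^{(1)}$, realized via admissible vector-fields depending only on the first coordinate and supported near each interface. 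Justifying this product reduction, and then carrying out the direct one-dimensional verification that the generic Cauchy--Schwarz bound is still attained, is where the main technical work lies.
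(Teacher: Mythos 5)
Your generic case is correct and is essentially the paper's argument in a cleaner package: the paper first replaces $\n_{ij}$ by its average $\navg_{ij}$ via Jensen (getting $Q(w)\le -w^T N w$ with $N=\sum A_{ij}\navg_{ij}\navg_{ij}^T$) and then applies a matrix Cauchy--Schwarz inequality to get $N\ge M^T L_A^{-1} M$, whereas your orthogonal projection of $\phi(w)$ onto $\mathrm{Image}(\psi^*)$ in $H=\bigoplus L^2(\Sigma_{ij},\gamma^{n-1})$ accomplishes both steps at once. The dichotomy you invoke (surjectivity of the translation map versus effective one-dimensionality, via stability and the identity $Q(w)=-\sum\int\inr{w}{\n_{ij}}^2 d\gamma^{n-1}$) is exactly the paper's Lemma on the dichotomy, though you should note explicitly that $T\ne 0$ (which follows from $\gamma^{n-1}(\Sigma)>0$ and $|\n|=1$), since this is what forces $\dim\ker T=n-1$ and hence a single direction $\theta$ with $\n\equiv\pm\theta$, rather than an ``iteration.''

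The genuine gap is in your degenerate case. You assert that the one-dimensional cluster $\Omega^{(1)}$ has the form $(-\infty,a)\cup(a,b)\cup(b,\infty)$ with $\Sigma_{13}=\emptyset$. Nothing proved up to this point justifies that structure: at this stage one only knows that each $H_{ij,\gamma}$ is constant, which in one dimension (where $H_{ij}=0$ and $H_{ij,\gamma}=-\inr{x}{\n_{ij}}$) forces each $\Sigma_{ij}$ to be contained in $\{\pm a_{ij}\}$, i.e.\ to consist of at most \emph{two} points, and each cell to be a finite union of intervals. A priori the cells could be disconnected and all three interfaces nonempty; the paper explicitly refrains from ruling this out (it is only excluded a posteriori by the uniqueness theorem). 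The paper's computation therefore works with general finite interface sets: it takes $X_{ij}$ equal to $\n_{ij}$ on $\Sigma_{ij}$ and $0$ on the other interfaces, sets $a=L_A^{-1}y$ and $X=\sum_{i<j}(a_i-a_j)X_{ij}$, and verifies directly (using $\varphi'=-x\varphi$, $\varphi''=(x^2-1)\varphi$ and $H_{ij,\gamma}\n_{ij}(x)(-x)=x^2$) that $\delta_X V=y$ and $Q(X)=-a^TL_Aa=-y^TL_A^{-1}y$, with equality. Your ``two decoupled applications of $(I^{(1)}_m)''=-1/I^{(1)}_m$'' both presupposes the unproven interval structure and leaves the actual verification undone, as you yourself acknowledge. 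To close the gap you should drop the structural assumption and carry out the paper's pointwise computation for arbitrary finite $\Sigma_{ij}$.
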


Recall that by Corollary \ref{cor:pos-def}, $L_A$ is strictly positive-definite on $E$. For the proof, define the ``average'' normals $\navg_{ij}$ by
\[
  \navg_{ij} = \begin{cases} \frac{1}{A_{ij}} \int_{\Sigma_{ij}} \n_{ij}\, d\gamma^{n-1} & A_{ij} > 0 \\ 0 & A_{ij} = 0 \end{cases} . 
\]
By~\eqref{eq:formula-first-variation-of-volume}, we have:
\[
(\delta_w V)_i = \sum_{j \neq i} \int_{\Sigma_{ij}} \inr{w}{\n_{ij}} \, d\gamma^{n-1} = \sum_{j \neq i} A_{ij} \inr{w}{\navg_{ij}} .
\]
Since $\navg_{ji} = -\navg_{ij}$, we can write this as:
\[
    \delta_w V = \sum_{i<j} A_{ij} (e_i - e_j) \navg_{ij}^T w = M w ,
\]
where $M: \R^n \to E$ is the following linear operator:
\begin{equation} \label{eq:M}
    M := \sum_{i<j} A_{ij} (e_i - e_j) \navg_{ij}^T .
\end{equation}

We first observe the following dichotomy: for a minimizing cluster,
either $M$ is surjective or the cluster is ``effectively one-dimensional.''

\begin{definition}
    The cluster $\Omega$ is called effectively one-dimensional if there exist
    a cluster $\tilde \Omega$ in $\R$ and $\theta \in S^{n-1}$ such that
    for all $i$, $\Omega_i$ and $\{x \in \R^n: \inr{x}{\theta} \in \tilde \Omega_i\}$ coincide up to a null-set.
\end{definition}
 
\begin{lemma}\label{lem:dichotomy}
    Let $n \ge 2$ and let $\Omega$ be an isoperimetric minimizing cluster for the Gaussian measure. Then exactly one of the following possibilities holds: either $M: \R^n \to E$ is surjective, or $\Omega$ is effectively one-dimensional.
\end{lemma}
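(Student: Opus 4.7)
The plan is to show that the two alternatives are mutually exclusive and exhaustive. Note first that $\dim E = 2$, so ``$M$ not surjective'' is equivalent to $\mathrm{rank}(M) \le 1$. For exclusivity, if $\Omega$ is effectively one-dimensional in some direction $\theta$, then on every interface $\n_{ij} \equiv \pm \theta$, so each $\navg_{ij}$ is a scalar multiple of $\theta$, and hence $Mw = \inr{w}{\theta} \sum_{i<j} A_{ij} c_{ij} (e_i - e_j)$ takes values in a one-dimensional subspace of $E$; thus $M$ is not surjective.

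For the substantive direction, suppose $M$ is not surjective and take any $w \in \ker M$. Since the constant vector-field $X \equiv w$ is admissible and $\delta_w V = Mw = 0$, Lemma~\ref{lem:stable} combined with the translation-variation formula of Theorem~\ref{thm:formula} gives
\[
0 \le Q(w) = -\sum_{i<j} \int_{\Sigma_{ij}} \inr{w}{\n_{ij}}^2 \, d\gamma^{n-1} \le 0,
\]
forcing $\inr{w}{\n_{ij}} = 0$ $\gamma^{n-1}$-almost everywhere on each $\Sigma_{ij}$. If $M = 0$ this would hold for every $w \in \R^n$, forcing $\n_{ij} = 0$ a.e.\ on each interface with $A_{ij} > 0$ and contradicting $|\n_{ij}| = 1$; for $v \in \interior \simplex$ at least two interfaces have $A_{ij} > 0$ by Lemma~\ref{lem:span}, and if $v \in \partial \simplex$ the minimizer already reduces by the single-bubble Gaussian isoperimetric inequality to a halfplane, which is effectively one-dimensional. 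Hence $\mathrm{rank}(M) = 1$, and taking $\theta$ to span $(\ker M)^\perp$, every $\n_{ij}$ is parallel to $\theta$ almost everywhere on each interface.

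The final step extracts effective one-dimensionality. By Theorem~\ref{thm:Almgren}(iii) each $\Sigma_{ij}$ is a smooth $(n-1)$-submanifold, so continuity of the unit normal upgrades the a.e.\ statement to $\n_{ij} \equiv \pm\theta$ on every connected component, placing each component in an affine hyperplane perpendicular to $\theta$. Second-countability of smooth manifolds and the finite number of index pairs produce a countable collection $\{H_{t_k}\}_{k \ge 1}$ of such hyperplanes containing all interfaces. On each maximal open slab $U = \{a < \inr{x}{\theta} < b\}$ of $\R^n \setminus \bigcup_k H_{t_k}$ the relative perimeter of $\Omega_i$ in $U$ vanishes; by the classical dichotomy for finite-perimeter subsets of a connected open set, $\Omega_i \cap U$ is either Lebesgue-null or Lebesgue-conull in $U$. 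Assembling the slabs exhibits $\Omega_i$ as $\{x : \inr{x}{\theta} \in \tilde\Omega_i\}$ up to null sets, proving effective one-dimensionality.

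I expect the main obstacle to be precisely this last step: everything upstream is a clean application of stability to translation vector-fields, but the geometric inference from ``all interfacial normals are parallel to $\theta$'' to ``cells are cylinders in $\theta^\perp$'' requires combining the smoothness of interfaces with the measure-theoretic dichotomy for zero relative perimeter in a connected open domain.
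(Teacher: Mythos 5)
Your argument is correct and matches the paper's up to the point where you conclude that every $\n_{ij}$ equals $\pm\theta$ on each (smooth, hence locally constant-normal) interface. The handling of the degenerate volumes is also fine, if organized slightly differently from the paper, which simply notes that $\max_i\gamma(\Omega_i)=1$ makes the claim vacuous and otherwise bounds $\gamma^{n-1}(\Sigma)\ge\frac12\sum_i \isop(\gamma(\Omega_i))>0$ to get $M\ne 0$.

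The final step, however, has a genuine gap. Second-countability only gives you \emph{countably} many hyperplanes $\{x:\inr{x}{\theta}=t_k\}$ containing the interfaces, and a countable set $\{t_k\}\subset\R$ can be dense in an interval. In that case $\R\setminus\{t_k\}$ contains no open interval at all, there are no ``maximal open slabs,'' and the assembly argument collapses; finiteness of the total perimeter does not rescue you, since infinitely many hyperplanes could each carry a component of summable positive $\gamma^{n-1}$-measure. The paper closes exactly this hole using information you have already established but did not exploit: since each $\n_{ij}$ is locally constant, the unweighted mean curvature $H_{ij}$ vanishes, so $H_{ij,\gamma}(x)=-\inr{x}{\n_{ij}(x)}=\mp\inr{x}{\theta}$; and by Theorem~\ref{thm:first-order-conditions-expanded}(i) the function $H_{ij,\gamma}$ is a single constant on all of $\Sigma_{ij}$. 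Hence $\Sigma_{ij}$ lies in at most \emph{two} hyperplanes $\{\inr{x}{\theta}=\pm a_{ij}\}$, and $\partial^*\Omega_i$ in at most four, so the complement of their union in $\R$ has finitely many components and your slab dichotomy (null or co-null in each connected open slab, via the zero-relative-perimeter criterion) goes through. With that correction your route is essentially the paper's; your use of the relative-perimeter dichotomy in each slab is if anything slightly cleaner than the paper's passage through the density of $\partial^*\Omega_i$ in $\partial\Omega_i$.
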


\begin{proof}
   If $\Omega$ is effectively one-dimensional then for all $x \in \Sigma = \cup_{i<j} \Sigma_{ij}$, the normals $\n_{ij}$ coincide with $\pm \theta \in S^{n-1}$, and hence $\navg_{ij}$ all lie in the linear span of $\theta$, so that $M$ is not surjective (as $n \geq 2$). 

    For the converse direction, we may assume that $\max_i \gamma(\Omega_i) < 1$, otherwise there is nothing to prove. Note that for any $w \in \ker M$, $\delta_w V = M w = 0$, and so by stability (Lemma~\ref{lem:stable}) and~\eqref{eq:formula}, 
    \[
    Q(w) = - \sum_{i < j} \int_{\Sigma_{ij}} \inr{w}{\n_{ij}}^2\, d\gamma^{n-1} \geq 0 . 
    \]
   It follows that $\inr{w}{\n_{ij}}$ must vanish $\gamma^{n-1}$-a.e.\ on each interface $\Sigma_{ij}$. Since $w \in \ker M$ was arbitrary, we see that $\n \in (\ker M)^\perp$ a.e. on $\Sigma$. But since $\gamma^{n-1}(\Sigma) \geq \frac 12 \sum_i \isop(\gamma(\Omega_i)) > 0$ (by applying the single-bubble Gaussian isoperimetric profile $\isop: [0, 1] \to\R_+$ to each cell $\Omega_i$), and since $\abs{\n} = 1$, it follows that $M \neq 0$. 
   
   Consequently, if $M: \R^n \to E$ is not surjective then necessarily (since $\dim E = 2$) $\dim \ker M = n-1$. This means that there exists $\theta \in S^{n-1}$ such that $\n$ is a.e.\ equal to $\pm \theta$ on $\Sigma$. By rotational invariance of the Gaussian measure,
    we may assume without loss of generality that $\theta = e_1$.

    Let us construct the one-dimensional cluster $\tilde \Omega$ witnessing that $\Omega$
    is effectively one-dimensional. Fix $i \ne j$; since $\n_{ij}$ is continuous on $\Sigma_{ij}$
    and takes only two values, it must be locally constant; hence, $H_{ij}(x) = 0$ for all $x \in \Sigma_{ij}$,
    and so $H_{ij,\gamma}(x) = -\inr{x}{\n_{ij}} \in \{ \pm x_1 \}$ for all $x \in \Sigma_{ij}$.
    Since $H_{ij,\gamma}$ is constant on $\Sigma_{ij}$, it follows that $\Sigma_{ij}$ is contained
    in the two halfspaces $\{x: x_1 = \pm a\}$, for some $a \in \R$. Hence, $\partial^* \Omega_i$ is
    contained in at most four halfspaces: call them $\{x: x_1 \in \{\pm a, \pm b\}\}$.

    Up to modifying $\Omega_i$ on a $\gamma^n$-null set, $\partial^* \Omega_i$ is dense in $\partial \Omega_i$
    (see~\cite[Proposition~12.19]{MaggiBook}). Hence, $\partial \Omega_i$ is also contained in
    $\{x: x_1 \in \{\pm a, \pm b\}\}$. Now, $\R \setminus \{\pm a, \pm b\}$ has at most five connected
    components. For each of these components $U$, either $U \times \R^{n-1} \subset \Omega_i$
    or $U \times \R^{n-1} \cap \Omega_i = \emptyset$, because anything else would contradict
    the fact that $\partial \Omega_i \subset \{x: x_1 \in \{\pm a, \pm b\}\}$. Now define $\tilde \Omega_i$
    to be the union of those intervals $U$ satisfying $U \times \R^{n-1} \subset \Omega_i$.
    Repeating this construction for all $i$, we see that $\Omega$ is effectively one-dimensional.
\end{proof}

Based on the dichotomy in Lemma~\ref{lem:dichotomy}, we will prove Theorem~\ref{thm:hessian-bound-for-I}
in either of the two cases. Note that \emph{a-posteriori} we will show that the unique isoperimetric minimizing clusters $\Omega$ (with $\gamma(\Omega) \in \interior \Delta$) are tripods, which are clearly \emph{not} effectively one-dimensional, but we do not exclude this possibility beforehand.  

\smallskip
 We will require the following matrix form of the Cauchy--Schwarz inequality:

\begin{lemma}\label{lem:cs}
    If $X$ is a random vector in $\R^n$ and $Y$ is a random vector in $\R^k$ such that
    $\E |X|^2 < \infty$, $\E |Y|^2 < \infty$, and $\E Y Y^T$ is non-singular,
    then
    \[
        (\E XY^T) (\E Y Y^T)^{-1} (\E YX^T) \le \E X X^T
    \]
    (in the positive semi-definite sense)
    with equality if and only if $X = BY$ a.s., for a deterministic matrix $B$.
\end{lemma}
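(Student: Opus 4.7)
The plan is to prove this matrix Cauchy--Schwarz inequality by the standard device of subtracting off the best linear predictor of $X$ in terms of $Y$ and noting that the residual has a positive semi-definite covariance. Concretely, I would set $B := (\E X Y^T)(\E Y Y^T)^{-1}$, which is well-defined by the non-singularity assumption on $\E Y Y^T$, and consider the residual random vector $Z := X - BY$.

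The first step is to compute $\E Z Z^T$ by expanding the outer product and using linearity of expectation:
\[
\E Z Z^T = \E X X^T - B \, \E Y X^T - \E X Y^T B^T + B \, \E Y Y^T \, B^T.
\]
Substituting the explicit form of $B$, the last three terms collapse: the middle two each equal $(\E X Y^T)(\E Y Y^T)^{-1}(\E Y X^T)$, while the final term reduces to the same expression since $(\E Y Y^T)^{-1} (\E Y Y^T) (\E Y Y^T)^{-1} = (\E Y Y^T)^{-1}$. Altogether this yields the identity
\[
\E Z Z^T = \E X X^T - (\E X Y^T)(\E Y Y^T)^{-1}(\E Y X^T).
\]

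The inequality in the lemma is then immediate, since $\E Z Z^T$ is a covariance-type matrix and hence positive semi-definite: for any $u \in \R^n$, $u^T (\E Z Z^T) u = \E \langle u, Z\rangle^2 \ge 0$. For the equality characterization, equality in the matrix inequality means $\E Z Z^T = 0$, which by the same quadratic form computation forces $\langle u, Z\rangle = 0$ almost surely for every $u$, hence $Z = 0$ a.s., i.e.\ $X = BY$ a.s.; conversely, if $X = BY$ a.s.\ for some deterministic $B$, a direct substitution shows equality holds.

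There is essentially no obstacle here; the only mild subtlety is making sure the integrability hypotheses ($\E|X|^2, \E|Y|^2 < \infty$) suffice to justify manipulating the various second-moment matrices, which they do via Cauchy--Schwarz applied coordinatewise to ensure $\E X Y^T$ has finite entries. Everything else is a straightforward algebraic identity plus the elementary fact that a second-moment matrix is positive semi-definite and vanishes only on the zero random vector.
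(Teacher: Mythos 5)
Your proof is correct and follows essentially the same route as the paper's: both define the residual $Z = X - (\E X Y^T)(\E Y Y^T)^{-1} Y$, verify the algebraic identity $\E Z Z^T = \E X X^T - (\E XY^T)(\E Y Y^T)^{-1}(\E YX^T)$, and conclude from positive semi-definiteness of $\E Z Z^T$, with the equality case handled identically via $Z = 0$ a.s.
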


While this inequality may be found in the literature (e.g.~\cite{Tripathi:99}), we present a short independent proof, including that of the equality case, which we will require for establishing uniqueness of minimizing clusters. 

\begin{proof}[Proof of Lemma \ref{lem:cs}]
  Let $Z = X - (\E X Y^T) (\E Y Y^T)^{-1} Y$. Then $Z Z^T \ge 0$ (in the positive semi-definite sense),
  and since convex combinations of positive semi-definite matrices are also positive semi-definite,
  $\E Z Z^T \ge 0$. A computation verifies that:
  \[
    \E Z Z^T = \E X X^T - (\E XY^T) (\E Y Y^T)^{-1} (\E Y X^T) ,
  \]
  completing the proof of the inequality.

  To check the equality case, note that equality occurs iff $\E Z Z^T = 0$, iff $Z = 0$ a.s., iff $X = (\E X Y^T) (\E Y Y^T)^{-1} Y$ a.s. .
\end{proof}

Consider a minimizing cluster $\Omega$ with $\gamma(\Omega) = v \in \interior \simplex$.
By~\eqref{eq:formula} and the Cauchy--Schwarz (or Jensen) inequality:
\[
    Q(w) = \delta_{w}^2 A - \inr{\delta_{w}^2 V}{\lambda} = -\sum_{i<j} \int_{\Sigma_{ij}} \inr{w}{\n_{ij}}^2\, d\gamma^{n-1} \le -\sum_{i<j} \inr{w}{\navg_{ij}}^2 A_{ij} .
\] 
Denoting:
\begin{equation} \label{eq:N}
    N := \sum_{i<j} A_{ij} \navg_{ij} \navg_{ij}^T ,
\end{equation}
we have shown that:
\[
Q(w) \leq - w^T N w \;\;\; \forall w \in \R^n . 
\]

Now choose random vectors $X \in \R^n$ and $Y \in \R^k$ so that with probability $A_{ij}/\sum_{i<j} A_{ij}$, $X = \navg_{ij}$ and $Y = e_i - e_j$. Then, by definition of $N$, $M$ and $L_A$:
\[
\E X X^T = \frac{1}{ \sum_{i<j} A_{ij}} N ~,~ \E Y X^T = \frac{1}{ \sum_{i<j} A_{ij}} M ~,~ E Y Y^T = \frac{1}{ \sum_{i<j} A_{ij}} L_A .
\]
Lemma~\ref{lem:cs} implies that $M^T L_A^{-1} M \le N$, yielding:
\[
Q(w) \le -w^T N w \leq -w^T M^T L_A^{-1} M w \;\;\; \forall w \in \R^n . 
\]
Consequently, when $M$ is surjective, for any $y \in E$, we may choose $w \in \R^n$ so that:
\[
\delta_w V = M w = y \;\; \text{ and } \;\; Q(w) \leq -y L_A^{-1} y , 
\]
completing the proof of Theorem~\ref{thm:hessian-bound-for-I} in that case. 

\medskip

Now suppose that $M$ is not surjective. By Lemma~\ref{lem:dichotomy}, $\Omega$ is effectively one-dimensional.
To lighten our notation, we will assume that $\Omega$ itself is a cluster in $\R$; by the product structure of the Gaussian measure, everything
that we do here can be lifted to the original cluster in $\R^n$ by taking Cartesian product with $\R^{n-1}$. 

Now, the fact that $H_{ij,\gamma}$ is constant implies that each $\Sigma_{ij}$ can contain at most two points.
For each $i \ne j$, define the vector-field $X_{ij}$ by defining $X_{ij} = \n_{ij}$ on $\Sigma_{ij}$,
$X_{ij} = 0$ on all other interfaces, and extending $X_{ij}$ to a $C_c^\infty$
vector-field on $\R$ (it is possible to make it have compact support because there are at most finitely many points
in $\Sigma_{ij}$).
Choose $y \in E$, let $a = L_A^{-1} y$, and let $X = \sum_{i<j} (a_i - a_j) X_{ij}$.
Because the first two derivatives of the
one-dimensional Gaussian density $\varphi$ are $\varphi'(x) = -x\varphi(x)$ and $\varphi''(x) = (x^2 - 1) \varphi(x)$,
we easily compute that 
\begin{align*}
  \delta_X V_i &= \sum_{j \ne i} (a_i - a_j) \sum_{x \in \Sigma_{ij}} \varphi(x) = (L_A a)_i = y_i ~,\\
  \delta_X^2 A &= \sum_{i<j} (a_i - a_j)^2 \sum_{x \in \Sigma_{ij}} (x^2 - 1) \varphi(x) ~,\\
  \delta_X^2 V_i &= \sum_{j \ne i} (a_i - a_j)^2 \sum_{x \in \Sigma_{ij}} \n_{ij}(x) \cdot (-x \varphi(x)) ~. 
\end{align*}
On the other hand, at $x \in \Sigma_{ij}$ one has $H_{ij,\gamma} = \n_{ij}(x) \cdot (-x)$.
Since $\n_{ij} \in \{-1, 1\}$, it follows that
$H_{ij,\gamma} \n_{ij}(x) \cdot (-x) = x^2$, and so:
\begin{align*}
& \inr{\lambda}{\delta_X^2 V} = \sum_{i} \lambda_i \delta_X^2 V_i = \sum_{i<j} (\lambda_i - \lambda_j) (a_i - a_j)^2 \sum_{x \in \Sigma_{ij}} \n_{ij}(x) \cdot (-x \varphi(x)) \\
& = \sum_{i<j} H_{ij,\gamma} (a_i - a_j)^2 \sum_{x \in \Sigma_{ij}} \n_{ij}(x) \cdot (-x \varphi(x)) = \sum_{i<j} (a_i - a_j)^2 \sum_{x \in \Sigma_{ij}} x^2 \varphi(x) .
\end{align*}
Consequently:
\begin{align*}
    Q(X) &= \delta_X^2 A - \inr{\lambda}{\delta_X^2 V} = -\sum_{i<j} (a_i - a_j)^2 \sum_{x \in \Sigma_{ij}} \varphi(x) \\
    &= -\sum_{i<j} (a_i - a_j)^2 A_{ij}  = -a^T L_A a = -y^T L_A^{-1} y.
\end{align*}
This completes the proof of Theorem~\ref{thm:hessian-bound-for-I}.

\begin{remark}
A slight variation on the above argument actually shows that if the minimizing cluster $\Omega$ is effectively one-dimensional (with each cell of $\tilde \Omega$ consisting of finitely many intervals), then up to null-sets, each cell of $\tilde{\Omega}$ must in fact be a single (connected) interval. However, we will show in Section \ref{sec:uniqueness} that minimizing clusters $\Omega$ with $\gamma(\Omega) \in \interior \simplex$ must actually be tripod clusters (up to null-sets), and so in particular they cannot be effectively one-dimensional, and hence there is no point to insist on this additional information here.  
\end{remark}

\section{Proof of the Double-Bubble Theorem} \label{sec:proof}

Recall that the Gaussian double-bubble profile $I$ is defined by
\[
    I(v) = \inf\{\per(\Omega): \Omega \text{ is a 3-cluster with $\gamma(\Omega) = v$}\}.
\]
We are finally ready to prove our main theorem: that the Gaussian double-bubble
profile $I$ agrees with the model (tripod) profile $I_m$. 
We begin with two straightforward observations about $I$. 
\begin{lemma}\label{lem:semi-cont}
  $I : \Delta \to \Real_+$ is lower semi-continuous.
\end{lemma}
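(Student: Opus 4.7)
The plan is to argue by the standard direct-method/lower-semi-continuity scheme that has already been used to prove existence of isoperimetric minimizers in Theorem~\ref{thm:Almgren}(i). Suppose $\Delta \ni v^h \to v$; we may assume $\liminf_h I(v^h) < \infty$ and, after passing to a subsequence, that the $\liminf$ is attained as a limit. For each $h$ pick a $3$-cluster $\Omega^h$ with $\gamma(\Omega^h) = v^h$ and $P_\gamma(\Omega^h) \le I(v^h) + 1/h$.

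Next I would exploit compactness. Each cell $\Omega^h_i$ satisfies $P_\gamma(\Omega^h_i) \le P_\gamma(\Omega^h)$, so the cells have uniformly bounded weighted perimeter. By the $L^1(\gamma)$-compactness of sets of bounded weighted perimeter (the same fact invoked in Theorem~\ref{thm:Almgren}(i), obtained by truncating with large balls and a standard diagonal argument), I can extract a further subsequence along which $\Omega^h_i \to \Omega_i$ in $L^1(\gamma)$ for each $i=1,2,3$. Since $\gamma$ is a probability measure, $L^1(\gamma)$-convergence forces $\gamma(\Omega_i) = \lim_h \gamma(\Omega^h_i) = v_i$, so $\gamma(\Omega) = v$.

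I would then check that the limit $\Omega = (\Omega_1,\Omega_2,\Omega_3)$ is in fact a $3$-cluster. Passing to an a.e. pointwise convergent subsequence, $\sum_i \mathbf{1}_{\Omega^h_i} = 1$ a.e.\ implies $\sum_i \mathbf{1}_{\Omega_i} = 1$ a.e., so the $\Omega_i$ are pairwise disjoint and cover $\R^n$ up to a $\gamma$-null set (and a null-set modification makes the disjointness literal). Finiteness of each $P_\gamma(\Omega_i)$ follows from lower semi-continuity of weighted perimeter under $L^1(\gamma)$-convergence, which also yields
\[
P_\gamma(\Omega) \le \liminf_{h \to \infty} P_\gamma(\Omega^h) = \liminf_{h \to \infty} I(v^h).
\]
Since $\Omega$ is a competitor for $I(v)$, this gives $I(v) \le P_\gamma(\Omega) \le \liminf_h I(v^h)$, as desired.

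There is no real obstacle here: every ingredient (BV compactness in $L^1(\gamma)$, lower semi-continuity of $P_\gamma$ under $L^1(\gamma)$-convergence, and continuity of $\gamma$ on $L^1(\gamma)$-convergent sequences) has already been used in the proof of Theorem~\ref{thm:Almgren}(i), so the proof is essentially a repetition of that argument with $\gamma(\Omega^h) = v^h$ varying along the sequence instead of being held fixed.
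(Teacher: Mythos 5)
Your proof is correct and follows essentially the same route as the paper: extract near-minimizing clusters, use $L^1(\gamma)$-compactness of sets of bounded weighted perimeter and lower semi-continuity of $P_\gamma$, and pass to the limit. The only (immaterial) difference is that the paper takes exact minimizers and bounds their perimeter uniformly by $\max_{\Delta} I_m < \infty$, whereas you take $(1/h)$-near-minimizers and reduce to the case $\liminf_h I(v^h) < \infty$; both yield the required uniform perimeter bound.
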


\begin{proof} 
The proof is identical to the one of Theorem \ref{thm:Almgren} (i).  If $v_r \to v \in \simplex$, let $\Omega^r$ be a minimizing cluster with $\gamma(\Omega^r) = v_r$.  Note that for all $r$, $P(\Omega^r) = I(v_r) \leq I_m(v_r) \leq \max_{v \in \Delta} I_m(v) =: C < \infty$, and that $P(\Omega^r_i) \leq P(\Omega^r)$ for each $i$. Consequently, by passing to a subsequence, each of the cells $\Omega^r_i$ converges in $L^1(\gamma)$ to $\Omega_i$. By Dominated Convergence (as the total mass is finite), we must have $\gamma(\Omega_i) = v_i$, and 
   the limiting $\Omega$ is easily seen to be a cluster (possibly after a measure-zero modification to ensure disjointness of the cells).  Consequently:
  \[
    I(v) \le \per(\Omega) \le \liminf_{r \to \infty} \per(\Omega^r) = \liminf_{r \to \infty} I(v_r).
  \]
  Hence, $I$ is lower semi-continuous.

\end{proof}

\begin{lemma}\label{lem:agreement-on-boundary}
    On $\partial \simplex$, $I = I_m$.
\end{lemma}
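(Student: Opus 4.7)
The plan is to show both inequalities $I \leq I_m$ and $I \geq I_m$ on $\partial \simplex$ and reduce everything to the classical single-bubble Gaussian isoperimetric inequality of Sudakov--Tsirelson and Borell.

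For the upper bound, we have already observed in Section~\ref{sec:model} (after Lemma~\ref{lem:tripod-profile-continuous}) that $I(v) \leq I_m(v)$ for all $v \in \simplex$, by taking the model halfline cluster (times $\R^{n-1}$) when $v \in \partial \simplex$ and appealing to the product structure of the Gaussian measure. So the real work is the lower bound on the boundary.

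For the lower bound, fix $v \in \partial \simplex$. After relabeling, assume $v_3 = 0$, so $v = (a, 1-a, 0)$ for some $a \in [0,1]$, and by definition of the extension $I_m(v) = \isop(\max_i v_i) = \isop(\max(a, 1-a))$. Now let $\Omega$ be any $3$-cluster with $\gamma(\Omega) = v$. Then $\gamma(\Omega_1) = a$, $\gamma(\Omega_2) = 1-a$ and $\gamma(\Omega_3) = 0$. The classical single-bubble Gaussian isoperimetric inequality (i.e. $I^{(1)} = \isop$) applied separately to $\Omega_1$ and $\Omega_2$ gives $\per(\Omega_1) \geq \isop(a)$ and $\per(\Omega_2) \geq \isop(1-a)$. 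Since $\isop(t) = \varphi \circ \Phi^{-1}(t)$ satisfies $\isop(t) = \isop(1-t)$ (because $\Phi^{-1}(1-t) = -\Phi^{-1}(t)$ and $\varphi$ is even), both lower bounds equal $\isop(\max(a,1-a))$. Using $\per(\Omega_3) \geq 0$ trivially and the definition $\per(\Omega) = \tfrac12 \sum_i \per(\Omega_i)$ yields
\[
    \per(\Omega) \geq \tfrac12 \brac{\isop(a) + \isop(1-a)} = \isop(\max(a,1-a)) = I_m(v).
\]
Taking the infimum over all such clusters gives $I(v) \geq I_m(v)$, and combined with the reverse inequality we conclude $I = I_m$ on $\partial \simplex$.

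There is no real obstacle here; the entire argument is a one-line reduction to the classical case, using only that a null cell contributes nothing to the total perimeter sum and that the single-bubble Gaussian profile is symmetric about $1/2$. The only mild subtlety is remembering that the factor of $\tfrac12$ in $\per(\Omega)$ exactly compensates for the fact that the isoperimetric inequality applied to $\Omega_1$ and to $\Omega_2 = (\Omega_1)^c$ up to the null set $\Omega_3$ yields the same bound twice.
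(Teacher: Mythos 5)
Your proof is correct and is essentially the paper's own argument, just with the details spelled out: the paper's proof also reduces to the single-bubble Gaussian isoperimetric inequality (asserting tersely that $I(v) = \isop(v_2) = \isop(v_3)$ when $v_1 = 0$) and then identifies this with $I_m(v)$ via the boundary extension from Lemma~\ref{lem:tripod-profile-continuous}. Your explicit accounting of the factor $\tfrac12$ and the symmetry $\isop(t) = \isop(1-t)$ is exactly what makes the paper's one-line claim valid.
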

\begin{proof}
    Take $v \in \partial \simplex$. Then some $v_i$ is zero; without loss of generality
    suppose that $v_1 = 0$. The (single-bubble) Gaussian isoperimetric inequality states that
    $I(v) = \isop(v_2) = \isop(v_3)$, where recall $\isop : [0,1] \to \Real_+$ denotes the single-bubble Gaussian isoperimetric profile. But by Lemma~\ref{lem:tripod-profile-continuous}
    this is also equal to $I_m(v)$, concluding the proof. 
\end{proof}

\begin{proof}[Proof of Theorem~\ref{thm:main-I-I_m}]
    By definition, $I \le I_m$, so we will show the other direction.
    Since $I_m$ is continuous and $I$ is lower semi-continuous on $\Delta$, $I - I_m$ attains a global minimum at $v_0 \in \Delta$. Suppose in the contrapositive that $I(v_0) - I_m(v_0) < 0$. By Lemma~\ref{lem:agreement-on-boundary}, we must have $v_0 \in \interior \simplex$.

    Let $\Omega$ be a minimizing cluster with $\gamma(\Omega) = v_0$.
    Since $v_0$ is a minimum point of $I - I_m$, for any smooth flow $F_t$ along a vector-field $X$,
    we have by definition:
    \begin{equation}\label{eq:I_m-and-I}
        I_m(\gamma(F_t(\Omega))) \le I(\gamma(F_t(\Omega)) - (I - I_m)(v_0) \le \per(F_t(\Omega)) - (I - I_m)(v_0) .
    \end{equation}
        The two sides above
    are equal when $t = 0$, and twice differentiable in $t$. By comparing first and second
    derivatives at $t=0$, we conclude that:
    \begin{equation}
        \label{eq:compare-first}
        \inr{\nabla I_m(v_0)}{\delta_X V} = \delta_X A ~,
    \end{equation}
    \begin{equation}
        \label{eq:compare-second}
        \nabla^2_{\delta_X V,\delta_X V} I_m(v_0) + \inr{\nabla I_m(v_0)}{\delta_X^2 V} \le \delta_X^2 A.
    \end{equation}

    Now recall that by (\ref{eq:formula-first-variation-of-area}), we also have $\inr{\lambda}{\delta_X V} = \delta_X A$ for all admissible vector-fields $X$, 
    where as usual  $\lambda$ is the unique element of $E^*$ such that $\lambda_i - \lambda_j = H_{ij,\gamma}(\Omega)$. Since for every $y \in E$ there is a vector-field $X$ as above with $\delta_X V = y$ (e.g. by Theorem~\ref{thm:hessian-bound-for-I}), it follows from~\eqref{eq:compare-first} that necessarily $\nabla I_m(v_0) = \lambda$. 
    
    Applying this to~\eqref{eq:compare-second}, we deduce:
    \[
        \nabla^2_{\delta_X V,\delta_X V} I_m(v_0) \le \delta_X^2 A - \inr{\delta_X^2 V}{\lambda} = Q(X).
    \]
    By Theorem~\ref{thm:hessian-bound-for-I}, for any $y \in E$ we may choose $X$ so that:
    \[
        \nabla^2_{y,y} I_m(v_0) \le Q(X) \le - y^T L_A^{-1} y
    \]
    (where recall $A = A(\Omega) = \set{A_{ij}}$ is the collection of interface measures).
    It follows that $\nabla^2 I_m(v_0) \le - L_A^{-1}$ in the positive semi-definite sense, and since $L_A$ is positive-definite, we deduce that $-\nabla^2 I_m(v_0)^{-1} \le L_A$. It follows by Proposition~\ref{prop:I_m-equation} that:
    \[
        2 I_m(v_0) = -\tr[(\nabla^2 I_m(v_0))^{-1}] \le \tr(L_A) = 2 \sum_{i<j} A_{ij} = 2 I(v_0) ,
    \]
    in contradiction to the assumption that $I(v_0) < I_m(v_0)$.
\end{proof}

\section{Uniqueness of Isoperimetric Minimizers} \label{sec:uniqueness}

In Section \ref{sec:model} we have constructed tripod clusters on $E \simeq
\R^2$. By taking Cartesian product with $\R^{n-2}$ in arbitrary orientation, we obtain tripod clusters in $\R^n$ ($n \geq 2$). Namely, we say that $\Omega$ is a
tripod cluster in $\Real^n$ if there exist three unit-vectors $\{t_i \in \R^n: 1 \le i \le 3\}$ summing to zero, and $x_0 \in \R^n$, 
such that up to $\gamma^n$-null sets,
\[
 \Omega_i = \interior \{x \in \R^n: \max_j \{\inr{x - x_0}{t_j} \} = \inr{x - x_0}{t_i}  \}.
\]
Equivalently, $\Omega$ is a tripod cluster if there exist three unit-vectors $\{n_{ij} \in \R^n:  (i,j) \in \cyclic \}$ summing to zero, and three scalars $\{ h_{ij} \in \R : (i,j) \in \cyclic \}$ summing to zero, so that denoting $n_{ji} := -n_{ij}$ and $h_{ji} := - h_{ij}$, we have up to $\gamma^n$-null sets
\[
 \Omega_i = \bigcap_{j \neq i} \{ x \in \R^n :   \inr{x}{n_{ij}} < h_{ij}  \} .
 \]
 Indeed, the equivalence is easily seen by using $n_{ij} = (t_j - t_i) / \sqrt{3}$ and $h_{ij} = \inr{x_0}{n_{ij}}$. 

\begin{theorem}\label{thm:unique-minimizer}
    If $\Omega$ is an isoperimetric minimizing cluster with $\gamma(\Omega) \in \interior \simplex$
    then $\Omega$ is a tripod cluster.
\end{theorem}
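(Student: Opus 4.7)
The plan is to revisit the proof of Theorem~\ref{thm:main-I-I_m} under the standing equality $I \equiv I_m$ on $\simplex$ established there. Since $v_0 := \gamma(\Omega) \in \interior \simplex$ is a minimum of the identically-zero function $I - I_m$, every inequality that appeared in that proof is now an equality at $v_0$, and the strategy is to extract the geometric rigidity this forces. First I would obtain $-\nabla^2 I_m(v_0)^{-1} = L_A$ from the trace equality $2 I_m(v_0) = 2 I(v_0)$ combined with the positive semi-definite inequality $-\nabla^2 I_m(v_0)^{-1} \le L_A$. Together with Proposition~\ref{prop:I_m-equation} this yields $L_A = L_{A^m(\modelV^{-1}(v_0))}$ on $E$, and since the three rank-one forms $\{(e_i-e_j)(e_i-e_j)^T\}_{(i,j)\in\cyclic}$ are linearly independent as quadratic forms on $E$, one can read off $A_{ij} = A^m_{ij}(\modelV^{-1}(v_0)) > 0$ for every pair. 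Positivity rules out the effectively one-dimensional alternative of Lemma~\ref{lem:dichotomy}, so $M$ is surjective and the Cauchy--Schwarz chain in the proof of Theorem~\ref{thm:hessian-bound-for-I} applies.

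Second, I would force each step of this chain to be an equality. Jensen tightness for a direction $w \in (\ker M)^\perp$ solving $M w = y$, combined with stability (Lemma~\ref{lem:stable}) applied to $w \in \ker M$ which forces $\inr{w}{\n_{ij}} = 0$ $\gamma^{n-1}$-a.e.\ on $\Sigma_{ij}$, yields that $\n_{ij}$ is $\gamma^{n-1}$-a.e.\ constant on each interface; by continuity on the smooth submanifold $\Sigma_{ij}$ this upgrades to genuine constancy, so each $\Sigma_{ij}$ lies in a hyperplane $H_{ij} := \{x : \inr{x}{\navg_{ij}} = c_{ij}\}$. Tightness of the matrix Cauchy--Schwarz inequality (the equality case of Lemma~\ref{lem:cs}) produces a linear map $B : E \to \R^n$ such that $\navg_{ij} = B(e_i - e_j)$, whence $\sum_{(i,j) \in \cyclic} \navg_{ij} = 0$ and the three unit normals lie in a common two-plane $V \subset \R^n$ at $120^\circ$ angles. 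Applying the first-order condition $H_{ij,\gamma} = \lambda_i - \lambda_j$ to each flat interface yields $c_{ij} = \lambda_j - \lambda_i$, whose cyclic sum vanishes, so the three hyperplanes share a common $(n-2)$-flat $x_0 + V^\perp$ for a unique $x_0 \in V$.

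Finally, I would show that $\Omega$ coincides (up to null sets) with a tripod cluster. From~\eqref{eq:nothing-lost} and $\Sigma_{j\ell} \subset H_{j\ell}$ we have $\gamma^{n-1}(\partial^* \Omega_j \setminus \bigcup_\ell H_{j\ell}) = 0$, so the relative perimeter of each $\Omega_j$ inside each of the six open connected sectors of $\R^n \setminus \bigcup_{(i,j) \in \cyclic} H_{ij}$ vanishes; by connectedness, each such sector lies in a single cell (up to a $\gamma$-null set), yielding a sector-to-cell assignment. A short enumeration shows that, given the three fixed lines in $V$, the constraints that each $\navg_{ij}$ is a unit normal to $H_{ij}$ and that $\sum_{(i,j) \in \cyclic}\navg_{ij} = 0$ admit exactly two orientations of the triple $(\navg_{12}, \navg_{23}, \navg_{31})$; our $\Omega$'s normals must be one of these two, and in either case the corresponding sector-to-cell assignment gives $\Omega_i = \bigcap_{j \neq i}\{x : \inr{x}{\navg_{ij}} < c_{ij}\}$ (up to null sets), which is precisely the definition of a tripod cluster. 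The two main obstacles I anticipate are (i) upgrading the a.e.-constancy of $\n_{ij}$ from the specific variations used in the proof of Theorem~\ref{thm:hessian-bound-for-I} to every direction in $\R^n$, handled via the decomposition $\R^n = \ker M \oplus (\ker M)^\perp$ together with stability, and (ii) carrying out the rigidity step using only the moderate regularity provided by Theorem~\ref{thm:Almgren} without invoking the higher-order boundary results of Remark~\ref{rem:no-higher-regularity}; the key algebraic input for~(ii) is the constraint $\sum \navg_{ij} = 0$, which reduces the admissible sector-to-cell assignments to just two tripod configurations.
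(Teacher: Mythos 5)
Your proposal follows essentially the same route as the paper's proof: force equality in the trace bound to obtain $L_A = L_{A^m}$ and hence $A_{ij} = A^m_{ij} > 0$; force equality in the Jensen step and in the matrix Cauchy--Schwarz step to get constant unit normals with $\sum_{(i,j)\in\cyclic}\navg_{ij}=0$; deduce from constancy of $H_{ij,\gamma}$ that the interfaces lie in hyperplanes through a common $(n-2)$-flat; and finally use vanishing relative perimeter in connected sectors to identify each cell with an intersection of half-spaces. One justification is off, though: positivity of all three $A_{ij}$ does \emph{not} rule out the effectively one-dimensional alternative of Lemma~\ref{lem:dichotomy} (a one-dimensional cluster whose cells are unions of intervals can perfectly well have all three interfaces of positive measure), so it does not give you surjectivity of $M$. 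Fortunately this step is unnecessary: the chain of inequalities and the lower bound $-(\delta_w V)^T L_A^{-1}(\delta_w V) \le Q(w)$ from Corollary~\ref{cor:minimizer} hold for every $w \in \R^n$ with no surjectivity assumption (only positive-definiteness of $L_A$ on $E$ is needed, which Corollary~\ref{cor:pos-def} supplies), and the one-dimensional case is excluded a posteriori, since three unit vectors all of the form $\pm\theta$ cannot sum to zero. With that repair, your argument is the paper's argument.
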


Now that we know Theorem~\ref{thm:main-I-I_m}, the idea is to revisit the various inequalities
used in that proof, and observe that they must be sharp in the case of minimizing clusters.
We begin by restating the relationship between variations and the derivatives of $I$.
We essentially already used this relationship in the proof of Theorem~\ref{thm:main-I-I_m},
but this is easier to state now that we know that $I = I_m$ is smooth on $\interior \Delta$. 

\begin{lemma}\label{lem:minimizer-variation}
  Let $\Omega$ be an isoperimetric minimizing cluster with $\gamma(\Omega) = v \in \interior \simplex$, 
  and let $\lambda \in E^*$ be given by Theorem \ref{thm:first-order-conditions-expanded}, so that $\lambda_i - \lambda_j = H_{ij,\gamma}$, where $H_{ij,\gamma}$ are the weighted mean-curvatures of $\Omega$'s interfaces. Then $\nabla I(v) = \lambda$, and for any admissible vector-field $X$ we have:
  \begin{equation} \label{eq:minimizer-variation-conc}
  (\delta_X V)^T \nabla^2 I(v) \delta_X V \le Q(X) = \delta^2_X A - \inr{\lambda}{\delta^2_X V} . 
  \end{equation}
\end{lemma}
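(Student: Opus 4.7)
The plan is to mimic the calculation in the proof of Theorem~\ref{thm:main-I-I_m}, but now exploit the fact that we know $I = I_m$ is $C^\infty$ on $\interior \simplex$ (by Lemma~\ref{lem:tripod-profile-continuous}), so we can cleanly extract first- and second-order information by Taylor expansion.

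First I would fix an admissible vector field $X$ with flow $F_t$, and consider the function
\[
  \phi(t) := \per(F_t(\Omega)) - I(\gamma(F_t(\Omega))).
\]
Since $\Omega$ is minimizing, $I(\gamma(F_t(\Omega))) \le \per(F_t(\Omega))$, with equality at $t=0$, so $\phi \ge 0$ and $\phi(0) = 0$. Hence $t = 0$ is a global minimum of $\phi$. By assumption (\ref{eq:gamma-regular}) and Lemma~\ref{lem:regular}, the functions $t \mapsto \gamma(F_t(\Omega))$ and $t \mapsto \per(F_t(\Omega))$ are $C^\infty$ in a neighborhood of $t=0$; combined with the $C^\infty$-smoothness of $I = I_m$ on $\interior \simplex$, this makes $\phi$ twice differentiable at $0$.

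From $\phi'(0) = 0$ I obtain
\[
  \delta_X A = \inr{\nabla I(v)}{\delta_X V}
\]
for every admissible $X$. On the other hand, Theorem~\ref{thm:first-order-conditions-expanded}~(iv) gives $\delta_X A = \inr{\lambda}{\delta_X V}$. Since by Lemma~\ref{lem:compensators} (or by Theorem~\ref{thm:hessian-bound-for-I}) the set $\{ \delta_X V : X \text{ admissible}\}$ spans $E$, the two linear functionals $\nabla I(v)$ and $\lambda$ on $E$ must coincide, giving $\nabla I(v) = \lambda$.

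For the second-order part, I would expand $\phi''(0) \ge 0$ via the chain rule. Writing $g(t) = \gamma(F_t(\Omega)) - v \in E$, we have $g(0) = 0$, $g'(0) = \delta_X V$, $g''(0) = \delta_X^2 V$, so
\[
  \left.\frac{d^2}{dt^2}\right|_{t=0} I(\gamma(F_t(\Omega))) = (\delta_X V)^T \nabla^2 I(v)\, \delta_X V + \inr{\nabla I(v)}{\delta_X^2 V}.
\]
Combined with $\phi''(0) \ge 0$ and the identification $\nabla I(v) = \lambda$, this rearranges to
\[
  (\delta_X V)^T \nabla^2 I(v)\, \delta_X V \le \delta_X^2 A - \inr{\lambda}{\delta_X^2 V} = Q(X),
\]
which is the inequality \eqref{eq:minimizer-variation-conc}.

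The only non-routine point is justifying the differentiability of $t \mapsto I(\gamma(F_t(\Omega)))$, but this is immediate once $I$ is known to be smooth on $\interior \simplex$ and the volume-regularity set-up from Section~\ref{sec:prelim} is invoked. Everything else is a direct Taylor expansion matched against the known first-order identities.
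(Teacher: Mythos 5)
Your proposal is correct and follows essentially the same route as the paper: both exploit that $t=0$ is a global minimum of $\per(F_t(\Omega)) - I(\gamma(F_t(\Omega)))$, equate first derivatives to identify $\nabla I(v)=\lambda$ via the spanning of $E$ by $\{\delta_X V\}$, and compare second derivatives to get the Hessian inequality. No gaps.
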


\begin{proof}
  If $\set{F_t}_{t \in \R}$ is a flow along $X$, the definition of $I$ implies that
  \begin{equation}\label{eq:I-inequality-pointwise}
    I(\gamma(F_t(\Omega))) \le \per(F_t(\Omega)),
  \end{equation}
  with equality at $t=0$ because $\Omega$ is minimizing. It follows that the (smooth) functions on either side above must be tangent at $t=0$, and so equating derivatives at $t=0$ we deduce:
      \[
    \inr{\nabla I(v)}{\delta_X V} = \delta_X A,
  \]
    for all admissible vector-fields $X$. Since for every $y \in E$ there is a vector-field $X$ as above with $\delta_X V = y$ (e.g. by Theorem~\ref{thm:hessian-bound-for-I}), it follows from~\eqref{eq:compare-first} that necessarily $\nabla I (v) = \lambda$.
  
  In addition, we must have domination between the second derivatives at $t=0$ of the two sides in~\eqref{eq:I-inequality-pointwise}, and so differentiating twice at $t=0$ we obtain:
   \[
   \inr{\nabla^2 I(v) \delta_X V}{\delta_X V} + \inr{\nabla I(v)}{\delta_X^2 V}\leq \delta_X^2 A . 
   \]
   Rearranging terms and using $\nabla I(v) = \lambda$, (\ref{eq:minimizer-variation-conc}) follows. 
\end{proof}

We immediately deduce from Lemma~\ref{lem:minimizer-variation} and Theorem~\ref{thm:hessian-bound-for-I} that $\nabla^2 I(v) \le - L_A^{-1}$ in the positive-definite sense.
We now observe that this in fact must be an equality. Note that this characterizes the boundary measures $A_{ij}$ for a minimizing cluster.
\begin{lemma}\label{lem:minimizer}
    Let $\Omega$ be an isoperimetric minimizing cluster with $\gamma(\Omega) = v \in \interior \simplex$,
    and let $A_{ij} = \gamma^{n-1}(\Sigma_{ij})$, where $\Sigma_{ij}$ are the interfaces of $\Omega$. Let $A^m_{ij} = \gamma^{n-1}(\Sigma^m_{ij})$, where $\Sigma^m_{ij}$ are the interfaces of a model tripod cluster $\Omega^m$ with $\gamma(\Omega^m) = v$. Then:
    \begin{enumerate}
    	\item $\nabla^2 I(v) = - L_A^{-1}$. 
    	\item $A_{ij} = A^m_{ij}$ for all $i \neq j$. 
    \end{enumerate}
\end{lemma}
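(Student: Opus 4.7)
The plan is to combine three ingredients that are now all available: the equality $I = I_m$ from Theorem~\ref{thm:main-I-I_m}, the exact formula $\nabla^2 I_m(v) = -L_{A^m}^{-1}$ from Proposition~\ref{prop:I_m-equation}, and the upper bound on $\nabla^2 I(v)$ furnished by Lemma~\ref{lem:minimizer-variation} together with Theorem~\ref{thm:hessian-bound-for-I}. The entire argument is essentially a rigidity argument: the matrix inequality that drove the proof of Theorem~\ref{thm:main-I-I_m} is already sharp for tripods, so once one has $I = I_m$, saturation is forced.

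Concretely, I would first remark that, since $I = I_m$ and $I_m$ is $C^\infty$ on $\interior \simplex$ by Lemma~\ref{lem:tripod-profile-continuous}, $I$ is $C^\infty$ on $\interior \simplex$, and Proposition~\ref{prop:I_m-equation} yields $\nabla^2 I(v) = \nabla^2 I_m(v) = -L_{A^m}^{-1}$ as quadratic forms on $E$. In parallel, Lemma~\ref{lem:minimizer-variation} combined with Theorem~\ref{thm:hessian-bound-for-I} gives, for every $y \in E$, a vector-field $X$ with $\delta_X V = y$ and
\[
 y^T \nabla^2 I(v) y \le Q(X) \le -y^T L_A^{-1} y.
\]
Hence $\nabla^2 I(v) \le -L_A^{-1}$ on $E$, which rearranges (using positive-definiteness of $L_A$ and $L_{A^m}$ on $E$ guaranteed by Corollary~\ref{cor:pos-def}) to $L_A \ge L_{A^m}$ on $E$.

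The decisive step is a trace comparison on $E$. Since each outer product $(e_i - e_j)(e_i - e_j)^T$ annihilates the line $E^\perp$ spanned by $(1,1,1)$, both $L_A$ and $L_{A^m}$ vanish on $E^\perp$, and so $\tr_E(L_A) = \tr(L_A) = 2\sum_{i<j} A_{ij} = 2 I(v)$, and likewise $\tr_E(L_{A^m}) = 2 I_m(v)$. The equality $I(v) = I_m(v)$ therefore forces $\tr_E(L_A - L_{A^m}) = 0$; combined with $L_A - L_{A^m} \ge 0$ on $E$, this gives $L_A = L_{A^m}$ on $E$, and then on all of $\R^3$ by the observation about $E^\perp$.

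Finally, to obtain $A_{ij} = A_{ij}^m$ I would invoke injectivity of the map $A \mapsto L_A$: reading off the off-diagonal entries gives $(L_A)_{ij} = -A_{ij}$ for $i \ne j$, so the coefficients are recovered directly. With $A = A^m$ in hand, the first claim $\nabla^2 I(v) = -L_A^{-1}$ is immediate from $\nabla^2 I(v) = -L_{A^m}^{-1}$. I do not anticipate a real obstacle; the only place where one must take care is distinguishing quadratic forms on $E$ from those on $\R^3$, and checking that the restriction to $E$ loses no information for the matrices $L_A$ that appear.
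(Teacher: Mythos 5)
Your proposal is correct and follows essentially the same route as the paper: deduce $\nabla^2 I(v) \le -L_A^{-1}$ from Lemma~\ref{lem:minimizer-variation} and Theorem~\ref{thm:hessian-bound-for-I}, invert to get a positive semi-definite comparison with $L_A$, and use the trace identity $\tr(L_A) = 2I(v) = 2I_m(v) = \tr(L_{A^m})$ from Proposition~\ref{prop:I_m-equation} together with the rigidity ``PSD plus zero trace implies zero'' to force equality, then read off the off-diagonal entries. The only cosmetic difference is that the paper compares $-(\nabla^2 I(v))^{-1}$ directly with $L_A$, whereas you substitute $\nabla^2 I(v) = -L_{A^m}^{-1}$ first and compare $L_A$ with $L_{A^m}$; both are valid.
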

\begin{proof}
    By the preceding comments, we know that $\nabla^2 I(v) \le - L_A^{-1}$, and hence $-(\nabla^2 I(v))^{-1} \leq L_A$. On the other hand, by Proposition~\ref{prop:I_m-equation}:
    \[
    \tr[-(\nabla^2 I(v))^{-1}] = \tr[-(\nabla^2 I_m(v))^{-1}] = \tr(L_{A^m}) = 2 I_m(v) = 2 I(v) = \tr(L_A) . 
    \]
    Since $A \geq 0$ and $\tr(A) = 0$ imply that $A=0$, it follows that necessarily $-(\nabla^2 I(v))^{-1} = L_A$, thereby concluding the proof of the first assertion. The second follows by inspecting the off-diagonal elements of the matrices on either side of the following equality:
    \[
    L_A = - (\nabla^2 I(v))^{-1} = - (\nabla^2 I_m(v))^{-1} = L_{A^m} .
    \]
     \end{proof}

\begin{corollary}\label{cor:minimizer}
Under the same assumptions and notation as in Lemma \ref{lem:minimizer}, $A_{ij} > 0$ for all $i \neq j$, and for any admissible vector-field $X$: 
    \[
               - (\delta_X V)^T L_A^{-1} (\delta_X V) \le Q(X) . 
    \]
\end{corollary}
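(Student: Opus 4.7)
The plan is to extract both assertions directly from the two results that immediately precede the corollary, with essentially no new computation required.

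For the first assertion, I would invoke the second conclusion of Lemma \ref{lem:minimizer}, which says $A_{ij} = A^m_{ij}$ for all $i \neq j$, where $A^m_{ij}$ are the interface measures of a model tripod cluster realizing the same prescribed measure $v \in \interior \simplex$. I would then read off the explicit formula from Lemma \ref{lem:model-sigma}, namely
\[
A^m_{ij}(x) = \varphi(\inr{x}{n_{ij}})(1 - \Phi(\inr{x}{t_{ij}})),
\]
and observe that $\varphi > 0$ and $1 - \Phi > 0$ pointwise on $\R$, so $A^m_{ij}(x) > 0$ for every $x \in E$. Since the model tripod cluster exists and satisfies $\gamma(\Omega^m) = v$ by Lemma \ref{lem:V-diffeo}, this forces $A_{ij} > 0$ for all $i \ne j$.

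For the second assertion, I would simply chain Lemma \ref{lem:minimizer-variation} with the first part of Lemma \ref{lem:minimizer}. The former states that for any admissible vector-field $X$,
\[
(\delta_X V)^T \nabla^2 I(v)\, \delta_X V \;\le\; Q(X),
\]
while the latter gives $\nabla^2 I(v) = -L_A^{-1}$ (note that $L_A$ is invertible on $E$ by the first assertion and Corollary \ref{cor:pos-def}). Substituting the identity into the inequality yields
\[
-(\delta_X V)^T L_A^{-1}(\delta_X V) \;\le\; Q(X),
\]
which is exactly the claim. There is no real obstacle here — the entire content of the corollary is a bookkeeping consequence of the two previous statements, packaged for convenient reuse in the uniqueness argument to follow.
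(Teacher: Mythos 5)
Your proof is correct and matches the paper's own (very terse) argument exactly: positivity of $A_{ij}$ follows from $A_{ij}=A^m_{ij}$ together with the strict positivity of the model interface areas, and the inequality is obtained by substituting $\nabla^2 I(v) = -L_A^{-1}$ from Lemma \ref{lem:minimizer} into the conclusion (\ref{eq:minimizer-variation-conc}) of Lemma \ref{lem:minimizer-variation}. No further comment is needed.
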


\begin{proof}
    Immediate from the previous lemma since $A^m_{ij} > 0$ and by (\ref{eq:minimizer-variation-conc}). 
\end{proof}

Let us now complete the proof of Theorem~\ref{thm:unique-minimizer}.
Recall the definitions (\ref{eq:M}) and (\ref{eq:N}) of $M$ and $N$ from the proof of Theorem~\ref{thm:hessian-bound-for-I}, and recall that $M w = \delta_w V$. 
In the proof of Theorem~\ref{thm:hessian-bound-for-I},
we proved that for all $w \in \R^n$:
\begin{align}
  Q(w) & = \delta^2_{w} A - \inr{\lambda}{\delta^2_{w} V} = -\sum_{i<j} \int_{\Sigma_{ij}} \inr{w}{\n}^2\, d\gamma^{n-1} \notag \\
  &\le -\sum_{i<j} \inr{w}{\navg_{ij}}^2 A_{ij} = - w^T N w \le -(M w)^T L_{A}^{-1} (M w) . \label{eq:steps-in-inequality}
\end{align}
On the other hand, by Corollary~\ref{cor:minimizer} applied to $X \equiv w$:
\[
-(M w)^T L_{A}^{-1} (M w)  \le Q(w)  .
\]
It follows that all three inequalities above must be equalities for every $w \in \R^n$.
From the first inequality in (\ref{eq:steps-in-inequality}), we conclude that
on each $\Sigma_{ij}$, $\n_{ij}$ is $\gamma^{n-1}$-a.e.\ constant; as $\Sigma_{ij}$ are smooth $(n-1)$-dimensional submanifolds, it follows that $\n_{ij}$ is constant. Since $A_{ij} = \gamma^{n-1}(\Sigma_{ij}) > 0$ by Corollary ~\ref{cor:minimizer}, it follows that $\navg_{ij}$ are non-zero and hence unit-vectors for all $i \neq j$. From the second inequality and the characterization of equality cases in Lemma~\ref{lem:cs},
we deduce the existence of an $n \times k$ matrix $B$ such that $B(e_i - e_j) = \navg_{ij}$
for every $i \ne j$ (using again that $A_{ij} > 0$ for all $i \neq j$).
In particular, this implies that $\sum_{(i,j) \in \cyclic} \navg_{ij} = B \sum_{(i,j) \in \cyclic} (e_i - e_j) =  0$.  

Since the normal vectors $\n_{ij}$ are constant, the unweighted mean curvatures $H_{ij}$ of $\Sigma_{ij}$
vanish, and so $H_{ij,\gamma}(x) = -\inr{x}{\n_{ij}(x)} = -\inr{x}{\navg_{ij}}$ for all $x \in \Sigma_{ij}$.
Recalling that each $H_{ij,\gamma}$ is constant, we must have
\begin{equation}\label{eq:interface-containment}
    \Sigma_{ij} \subset \{x \in \R^n: \inr{x}{\navg_{ij}} = -H_{ij,\gamma}\}.
\end{equation}
Denoting $S_{ij} := \{x \in \R^n: \inr{x}{\navg_{ij}} < -H_{ij,\gamma} \}$, we have $\Sigma_{ij} \subset \partial S_{ij}$.
We will show that up to a $\gamma^n$-null set, $\Omega_i = \bigcap_{j \ne i} S_{ij}$; as $\sum_{(i,j) \in \cyclic} \navg_{ij} = 0$ and $\sum_{(i,j) \in \cyclic} H_{ij,\gamma} = 0$ by Theorem~\ref{thm:first-order-conditions-expanded}, this will establish that $\Omega$ is a tripod cluster and conclude the proof of Theorem~\ref{thm:unique-minimizer}.

To this end, fix $i$, and consider the open set $S_{ab} \cap S_{cd}$, for all 4 possibilities when $(a,b) \in \set{(i,j) , (j,i)}$ and $(c,d) \in \set{(i,k), (k,i)}$, where $i,j,k$ are all distinct. Note that the relative perimeter $P(\Omega_i ; S_{ab} \cap S_{cd})$ is zero since $\H^{n-1}(\partial^* \Omega_i \setminus \cup_{j \neq i} \Sigma_{ij}) = 0$ and 
$\Sigma_{ij} \subset \partial S_{ij}$.  
As $S_{ab} \cap S_{cd}$ is open and connected, it follows by \cite[Exercise 12.17]{MaggiBook} that we may modify $\Omega_i$ by a $\gamma^{n}$-null set, so that $\Omega_i \cap S_{ab} \cap S_{cd}$ is either the empty set or $S_{ab} \cap S_{cd}$, for all 4 possibilities above. We now claim that 
$\Omega_i \cap S_{ij} \cap S_{ik} = S_{ij} \cap S_{ik}$, since otherwise, in all remaining 8 possibilities, we would have $\n_{ij} = -\navg_{ij}$ or $\n_{ik} = -\navg_{ik}$, in violation of $A_{ij} , A_{ik} > 0$. We therefore deduce that $\Omega_i \supset \bigcap_{j \ne i} S_{ij}$ for every $i$. Since $\gamma^n(\R^n \setminus \cup_{i} \Omega_i) = 0$, we conclude that $\Omega_i = \bigcap_{j \ne i} S_{ij}$ up to a $\gamma^n$-null set.

\section{Concluding Remarks} \label{sec:conclude}

\subsection{Extension to measures having strongly convex potentials}

Theorem \ref{thm:main-I-I_m} may be immediately extended to probability measures having strongly convex potentials.

\begin{definition}
A probability measure $\mu$ on $\R^n$ is said to have a $K$-strongly convex potential, $K > 0$, if 
$\mu = \exp(-W(x)) dx$ with $W \in C^\infty$ and $\text{Hess} W \geq K \cdot \Id$. 
\end{definition}

\begin{theorem} \label{thm:CaffarelliCor}
Let $\mu$ be a probability measure having $K$-strongly convex potential. 
Denote by $I_\mu : \Delta \rightarrow \Real_+$ its associated $3$-cluster isoperimetric profile, given by:
\[
I_\mu(v) := \inf \set{P_\mu(\Omega) : \text{$\Omega$ is a $3$-cluster with $\mu(\Omega) = v$}} .
\]
Then:
\[
I_\mu \geq \sqrt{K} I_m \text{ on } \Delta . 
\]
\end{theorem}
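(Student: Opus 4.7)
The plan is to reduce the statement to the Gaussian case already established in Theorem~\ref{thm:main-I-I_m} via \emph{Caffarelli's contraction theorem}, which under the hypothesis $\mathrm{Hess}(W) \ge K \cdot \Id$ produces a Brenier transport map $T : \R^n \to \R^n$ with $T_\# \gamma^n = \mu$ that is $K^{-1/2}$-Lipschitz (and, by Caffarelli's regularity theory applied to our smooth, strictly-positive densities, a $C^\infty$-diffeomorphism). Given any $3$-cluster $\Omega$ for $\mu$ with $\mu(\Omega) = v$, I would pull it back to the $3$-cluster $\tilde \Omega := (T^{-1}(\Omega_1), T^{-1}(\Omega_2), T^{-1}(\Omega_3))$ for $\gamma^n$; this is a legitimate cluster with $\gamma^n(\tilde \Omega) = v$, so Theorem~\ref{thm:main-I-I_m} yields $\per(\tilde \Omega) \ge I_m(v)$. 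The theorem will follow at once from the perimeter-contraction inequality
\begin{equation} \label{eq:caff-contract}
P_\mu(\Omega) \ge \sqrt K \cdot \per(\tilde \Omega),
\end{equation}
since taking the infimum over $\Omega$ gives $I_\mu(v) \ge \sqrt K \, I_m(v)$.

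To establish \eqref{eq:caff-contract}, I would proceed cell by cell. Write $L := K^{-1/2}$ and let $E_\delta$ denote the open $\delta$-enlargement of a set $E$. The $L$-Lipschitz property of $T$ gives the elementary inclusion $(T^{-1}\Omega_i)_{\epsilon/L} \subset T^{-1}((\Omega_i)_\epsilon)$ for every $i$; taking $\gamma^n$-measure and using $T_\#\gamma^n = \mu$, then subtracting $\gamma^n(\tilde\Omega_i) = \mu(\Omega_i)$, dividing by $\epsilon$, and passing to $\liminf_{\epsilon \to 0^+}$, one obtains the outer Minkowski-content bound $\gamma^{n,+}(\tilde\Omega_i) \le L \cdot \mu^+(\Omega_i)$. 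Summing over $i$ and halving produces the analogous inequality at the level of clusters.

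The remaining step is to upgrade this Minkowski-content inequality to the genuine weighted-perimeter inequality \eqref{eq:caff-contract}, which I expect to be the main technical obstacle. I would argue by regularization: approximate $\Omega$ in $L^1(\mu)$ by $3$-clusters $\Omega^{(k)}$ with $C^\infty$-smooth cell boundaries (e.g.\ by mollifying characteristic functions and then thresholding to enforce the covering and disjointness constraints), in such a way that $\mu(\Omega^{(k)}) \to v$ and $P_\mu(\Omega^{(k)}) \to P_\mu(\Omega)$. For smooth-boundary cells one has $P_\mu(\Omega^{(k)}) = \mu^+(\Omega^{(k)})$, and since $T$ is a smooth diffeomorphism the pullbacks $T^{-1}\Omega^{(k)}$ are themselves $C^\infty$-smooth, so $\per(T^{-1}\Omega^{(k)}) = \gamma^{n,+}(T^{-1}\Omega^{(k)})$. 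Inserting these identifications into the Minkowski-content bound gives $P_\mu(\Omega^{(k)}) \ge \sqrt K \cdot \per(T^{-1}\Omega^{(k)}) \ge \sqrt K \cdot I_m(\mu(\Omega^{(k)}))$ via Theorem~\ref{thm:main-I-I_m}; passing to the limit, using the continuity of $I_m$ from Lemma~\ref{lem:tripod-profile-continuous}, produces $P_\mu(\Omega) \ge \sqrt K \, I_m(v)$. Beyond this approximation and the regularity of the Brenier map for smooth densities, the argument uses only Caffarelli's theorem and elementary enlargement arithmetic.
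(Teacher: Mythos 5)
Your reduction is the same as the paper's: invoke Caffarelli's contraction theorem to get a $C^\infty$, $K^{-1/2}$-Lipschitz map $T$ with $T_\#\gamma^n=\mu$, pull the cluster back, and apply Theorem~\ref{thm:main-I-I_m}; everything therefore hinges on the perimeter-contraction inequality $P_\mu(\Omega)\ge\sqrt{K}\,\per(T^{-1}\Omega)$. Where you genuinely diverge is in how you prove that inequality. The paper works directly with the distributional definition of weighted perimeter: for a test vector-field $X\in C^\infty_c$, $|X|\le 1$, it pushes forward $Y=(dT)_*X$, notes $|Y|\le K^{-1/2}$, and uses the change-of-variables identity $(\div_\mu Y)(Tx)=(\div_{\gamma}X)(x)$ to get $\int_U \div_\mu Y\,d\mu=\int_{T^{-1}U}\div_\gamma X\,d\gamma$; taking suprema gives $\frac{1}{\sqrt K}P_\mu(U)\ge P_\gamma(T^{-1}U)$ for \emph{arbitrary} Borel $U$, with no approximation whatsoever. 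You instead go through outer Minkowski content. Your enlargement arithmetic and the resulting bound $\gamma^{n,+}(\tilde\Omega_i)\le K^{-1/2}\mu^+(\Omega_i)$ are correct, but, as you yourself flag, the passage from Minkowski content to perimeter is where the real work sits: $\mu^+(E)\le P_\mu(E)$ is false for general finite-perimeter sets (one can inflate the Minkowski content arbitrarily by adding a null set), so you are forced into the smoothing step. That step is feasible --- density-in-energy of polyhedral clusters (cf.\ \cite[Theorem 29.14]{MaggiBook}, adapted to a smooth strictly-positive density) plus the identification of weighted Minkowski content with weighted perimeter for such boundaries (which for unbounded cells needs a dominated-convergence argument using the Gaussian-type decay of $\mu$) --- but none of it is carried out, and it constitutes the bulk of the technical content of your proof. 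So the proposal is morally correct and closes via the same endgame (continuity of $I_m$ handles the fact that $\mu(\Omega^{(k)})\to v$ only approximately), but the paper's duality argument buys you the contraction inequality for free at the level of sets of finite perimeter, while your route trades that one-line computation for nontrivial cluster-approximation machinery. If you keep your route, you should at minimum cite the polyhedral density theorem and verify the weighted Minkowski-content identity for the approximants and their images under the diffeomorphism $T^{-1}$.
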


The proof is an immediate consequence of the following remarkable theorem by L.~Caffarelli \cite{CaffarelliContraction} (see also \cite{KimEMilmanGeneralizedCaffarelli} for an alternative proof and an extension to a more general scenario):

\begin{theorem}[Caffarelli's Contraction Theorem]
If $\mu$ is a probability measure having a $K$-strongly convex potential,
there exists a $C^{\infty}$ diffeomorphism $T : \R^n \rightarrow \R^n$ which pushes forward the Gaussian measure $\gamma$ onto $\mu$ which is $\frac{1}{\sqrt{K}}$-Lipschitz,  i.e. $\abs{T(x) - T(y)} \leq \frac{1}{\sqrt{K}} \abs{x-y}$ for all $x,y \in \R^n$. 
\end{theorem}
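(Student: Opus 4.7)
The plan is to realize $T$ as the Brenier optimal transport map from $\gamma$ to $\mu$ and bound its Lipschitz constant via a maximum-principle argument applied to the Monge--Ampère equation satisfied by the associated convex potential.

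First, I would invoke Brenier's theorem to produce a convex function $\phi : \R^n \to \R$ such that $T := \nabla \phi$ pushes $\gamma$ forward to $\mu$. Since $\gamma$ and $\mu$ both have $C^\infty$, strictly-positive densities, Caffarelli's interior regularity theory for the Monge--Ampère equation yields $\phi \in C^\infty(\R^n)$ with $D^2\phi(x) > 0$ everywhere. The push-forward identity then takes the log-form
\[
\log \det D^2 \phi(x) \;=\; W(\nabla \phi(x)) - \tfrac{1}{2} |x|^2 + c .
\]
The desired $K^{-1/2}$-Lipschitz bound for $T = \nabla\phi$ is equivalent to the pointwise estimate $\|D^2\phi(x)\|_{\mathrm{op}} \leq K^{-1/2}$ on all of $\R^n$.

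The main estimate is obtained by a second-order maximum principle. Suppose for the moment that $g(x) := \log \lambda_{\max}(D^2\phi(x))$ attains its supremum at some point $x_0$, and let $v$ be a unit eigenvector of $D^2\phi(x_0)$ for its largest eigenvalue. The auxiliary function $f(x) := \log \phi_{vv}(x)$ is majorized by $g$ and agrees with $g$ at $x_0$, so $f$ also attains a maximum at $x_0$, whence $\nabla f(x_0) = 0$ and $D^2 f(x_0) \leq 0$. Differentiating the log Monge--Ampère equation twice in the $v$-direction and contracting the resulting second-order identity against $(D^2\phi)^{-1}$, the symmetry of fourth partials identifies $\phi^{ij}\phi_{ijvv}(x_0)$ with $\tr\!\bigl((D^2\phi)^{-1} D^2 f(x_0)\bigr) \leq 0$, while the inverse-differentiation term yields a manifestly nonnegative trace $\tr\!\bigl((M^{-1/2} B M^{-1/2})^2\bigr) \geq 0$ with $M = D^2\phi(x_0)$ and $B_{ij} = \phi_{ijv}(x_0)$. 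Combining, and using $\nabla f(x_0) = 0$ to kill the first-order term $W_m(\nabla\phi)\phi_{mvv}$, one arrives at
\[
W_{ij}(\nabla\phi(x_0)) \, (D^2\phi(x_0)\, v)_i \, (D^2\phi(x_0)\, v)_j \;\leq\; 1 .
\]
The strong-convexity hypothesis $\nabla^2 W \geq K \cdot \mathrm{Id}$ then forces $K \, |D^2\phi(x_0)\, v|^2 \leq 1$, which since $D^2\phi(x_0)\, v = \lambda_{\max}(D^2\phi(x_0))\, v$ gives the desired bound $\lambda_{\max}(D^2\phi(x_0)) \leq K^{-1/2}$.

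The main obstacle, and the truly delicate aspect of the argument, is that $g$ need not attain its supremum on $\R^n$: the maximum of $\lambda_{\max}(D^2\phi)$ could in principle escape to infinity, in which case the interior maximum-principle argument above does not apply directly. One option, following Caffarelli, is to work on a growing family of bounded domains and exploit the Gaussian tail decay of $\gamma$ together with quantitative boundary estimates to rule out escape of the maximum; another is to first approximate $\mu$ by measures for which $\phi$ decays suitably at infinity (for instance compactly-supported approximations whose potentials remain $K$-strongly convex on their supports), apply the bound in that easier setting, and pass to the limit. Once the pointwise estimate $\lambda_{\max}(D^2\phi) \leq K^{-1/2}$ is in hand on all of $\R^n$, the $K^{-1/2}$-Lipschitz property of $T = \nabla\phi$ follows immediately.
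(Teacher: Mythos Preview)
The paper does not prove this theorem. It is quoted as a known result due to Caffarelli, with a citation to \cite{CaffarelliContraction} (and a pointer to \cite{KimEMilmanGeneralizedCaffarelli} for an alternative argument and extensions). The authors use it as a black box to transfer the Gaussian double-bubble inequality to measures with $K$-strongly convex potentials; they make no attempt to reproduce the proof.

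Your sketch, by contrast, is an outline of Caffarelli's original argument itself: realize $T$ as the Brenier map $\nabla\phi$, write down the Monge--Amp\`ere equation, differentiate twice in a top-eigenvector direction, and run a maximum-principle argument to bound $\lambda_{\max}(D^2\phi)$. The computation you describe at an interior maximum is essentially the right one, and you correctly flag the genuine issue --- that the supremum of $\lambda_{\max}(D^2\phi)$ need not be attained on $\R^n$ --- together with the standard remedies (exhaustion by bounded domains, or approximation and passage to the limit). So your proposal is a reasonable sketch of the proof of the cited theorem, but it is not something to compare against the paper: the paper simply invokes the result without proof.
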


We will require the following calculation:
\begin{lemma}
Let $T : \R^n \to \R^n$ denote a $C^{\infty}$ diffeomorphism pushing-forward $\mu_1 = \Psi_1(x) dx$ onto $\mu_2 = \Psi_2(y) dy$, where $\Psi_1,\Psi_2$ are strictly-positive $C^{\infty}$ densities. 
Let $X$ denote a $C^\infty$ vector-field $X$ on $\R^n$, and let $Y$ denote the vector-field obtained by push-forwarding $X$ via the differential $dT$, $Y = (dT)_* X$, given by:
\[
Y(y) = (dT)_* X(y) := (dT \cdot X)(T^{-1} y) .
\]
Then $Y$ is $C^{\infty}$-smooth and satisfies:
\[
(\div_{\mu_2} Y)(T x) = (\div_{\mu_1} X) (x) \;\;\; \forall x \in \R^n . 
\]
\end{lemma}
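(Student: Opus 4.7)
The plan is to prove the identity in the weak/distributional sense by integrating against a test function $\phi \in C_c^\infty(\R^n)$ and then conclude pointwise equality from continuity of both sides. The smoothness of $Y$ is immediate: since $T$ is a $C^{\infty}$-diffeomorphism, $T^{-1}$, $dT$, and $X$ are all $C^{\infty}$, hence $Y(y) = dT(T^{-1}y)\cdot X(T^{-1}y)$ is $C^{\infty}$ as a composition of smooth objects.

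For the identity itself, the core step is a double application of weighted integration by parts, sandwiched around a change of variables. Fix $\phi \in C_c^\infty(\R^n)$. The weighted Gauss--Green formula for $\mu_2$ gives
\[
\int (\div_{\mu_2} Y)\,\phi\, d\mu_2 \;=\; -\int \langle Y, \nabla\phi\rangle\, d\mu_2.
\]
Using the push-forward relation $T_*\mu_1 = \mu_2$ (so that $\int f\,d\mu_2 = \int (f\circ T)\,d\mu_1$), the defining identity $Y\circ T = dT\cdot X$, and the chain rule $\nabla(\phi\circ T) = dT^{T}(\nabla\phi\circ T)$, the right-hand side rewrites as
\[
-\int \langle dT(x)X(x),\, (\nabla\phi)(Tx)\rangle\, d\mu_1(x) \;=\; -\int \langle X,\, \nabla(\phi\circ T)\rangle\, d\mu_1.
\]
A second integration by parts (now with respect to $\mu_1$) and a second application of $T_*\mu_1 = \mu_2$ yield
\[
-\int \langle X,\, \nabla(\phi\circ T)\rangle\, d\mu_1 \;=\; \int (\div_{\mu_1} X)\,(\phi\circ T)\, d\mu_1 \;=\; \int (\div_{\mu_1} X)(T^{-1}y)\,\phi(y)\, d\mu_2(y).
\]
Since $\phi$ was arbitrary, $\div_{\mu_2} Y$ and $(\div_{\mu_1} X) \circ T^{-1}$ coincide as distributions against $\mu_2$; both being continuous functions (and $\mu_2$ having a strictly positive smooth density), they must agree pointwise, which is precisely the claim.

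I do not anticipate a substantive obstacle: all integrals are justified because $\phi$ is compactly supported and every other object in sight is smooth, and the only two ingredients used beyond integration by parts---the chain rule for $\phi\circ T$ and the transformation rule for push-forward measures---are standard. An alternative fully pointwise derivation is available by writing $\div_{\mu_2} Y = \div Y - \langle Y, \nabla \log \Psi_2\rangle$ at $y=Tx$ in local coordinates and exploiting the density identity $\log \Psi_1 = \log \Psi_2 \circ T + \log|\det dT|$ (which is equivalent to $T_*\mu_1 = \mu_2$), but the dual route above avoids the attendant bookkeeping with the derivatives of $\log|\det dT|$.
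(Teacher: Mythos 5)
Your proof is correct. The paper does not actually write this argument out --- it leaves the lemma as an exercise and hints at a direct pointwise verification via the change-of-variables identity $\Psi_1(x)/\Psi_2(Tx) = \det dT(x)$, which would require differentiating $\log\abs{\det dT}$ (Jacobi's formula) to handle the unweighted divergence term. Your duality route is a clean way to sidestep exactly that bookkeeping: each of your three steps checks out. The weighted Gauss--Green identity $\int (\div_{\mu}Z)\,\phi\,d\mu = -\int \langle Z,\nabla\phi\rangle\,d\mu$ is legitimate for smooth $Z$ and compactly supported $\phi$ with no growth hypotheses on $Z$; the transfer $\int f\,d\mu_2 = \int (f\circ T)\,d\mu_1$ is the definition of push-forward; $\phi\circ T$ is again a compactly supported smooth test function because $T$ is a global diffeomorphism, so the second integration by parts is valid; and the final passage from distributional to pointwise equality is justified since both sides are continuous and $\Psi_2$ is strictly positive. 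What the direct computation buys is a purely local statement requiring no integration at all (it works on any open set where $T$ is a diffeomorphism); what your version buys is that the only input beyond the chain rule is the measure-theoretic push-forward relation, with the Jacobian identity never appearing explicitly. Either is acceptable here.
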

\noindent
The proof, which we leave as an exercise, is a simple application of the change-of-variables formula:
\[
\frac{\Psi_1(x)}{\Psi_2(T x)} = \text{det} \; dT (x) . 
\]

\begin{proof}[Proof of Theorem \ref{thm:CaffarelliCor}]
Let $T$ be the map from Caffarelli's theorem, pushing forward $\gamma$ onto $\mu$.  For any $C^{\infty}$ compactly-supported vector-field $X$ with $\abs{X} \leq 1$, denote $Y := (dT)_* X$, and observe that since $T$ is $\frac{1}{\sqrt{K}}$-Lipschitz, then $Y$ is also $C^{\infty}$, compactly-supported, and satisfies $\abs{Y} \leq \frac{1}{\sqrt{K}}$. In addition, for any Borel subset $U$ in $\R^n$, observe that:
\[
\int_U \div_{\mu} Y (y) d\mu(y) = \int_{T^{-1} U}  ( \div_{\mu} Y) (Tx) d\gamma(x) = \int_{T^{-1} U}  \div_{\gamma^n} X (x) d\gamma(x) .
\]
Consequently, taking supremum over all such vector-fields $X$, we deduce that:
\[
\frac{1}{\sqrt{K}} P_{\mu}(U) \geq P_{\gamma}(T^{-1} U) . 
\]
On the other hand, by definition, $\mu(U) = \gamma(T^{-1} U)$. Applying these observations to the cells of an arbitrary $3$-cluster $\Omega$, and applying Theorem \ref{thm:main-I-I_m}, we deduce:
\[
 \frac{1}{\sqrt{K}} P_{\mu}(\Omega) \geq P_{\gamma}(T^{-1} \Omega) \geq I_m(\gamma(T^{-1}(\Omega))) = I_m(\mu(\Omega)) . 
\]
It follows that $I_\mu \geq \sqrt{K} I_m$, as asserted. 
\end{proof}

\begin{remark}
It is possible to extend the above argument to measures $\mu$ with \emph{non-smooth} densities having $K$-strongly convex potentials, namely $\mu = \exp(-W(x)) dx$ with $U(x) = W(x) - \frac{K}{2} \abs{x}^2 : \R^n \rightarrow \R \cup \{+\infty\}$ being convex, 
but we do not insist on this generality here. 
\end{remark}

\subsection{Functional Version}

It is also possible to obtain a functional version of the Gaussian double-bubble isoperimetric inequality for locally Lipschitz functions $f : \R^n \rightarrow \Delta^{(2)}$, in the spirit of Bobkov's functional version of the classical single-bubble one \cite{BobkovGaussianIsopInqViaCube}. 
This and additionally related analytic directions will be described in \cite{EMilmanNeeman-FunctionalVersions}.

\subsection{Gaussian Multi-Bubble Conjecture}

It is also possible to extend our results and to verify the Gaussian Multi-Bubble Conjecture, described in the Introduction, in full generality for all $k \leq n+1$.

\smallskip

To establish the conjecture when $k \geq 4$, we require additional regularity and structural information on the boundary of the interfaces $\Sigma_{ij}$ in $\Real^m$ for $m \leq k-2$. As described in Remark \ref{rem:no-higher-regularity}, such results have been obtained by various authors: F.~Morgan when $m=2$ \cite{MorganSoapBubblesInR2}, J.~Taylor when $m=2,3$ \cite{Taylor-SoapBubbleRegularityInR3}, and B.~White \cite{White-SoapBubbleRegularityInRn} and very recently M.~Colombo, N.~Edelen and L.~Spolaor \cite{CES-RegularityOfMinimalSurfacesNearCones} when $m \geq 4$. We use these results to construct variations of the isoperimetric minimizers which are more complicated than merely translations. As the argument is much more involved and requires several additional ingredients on top of the ones already introduced in this work, we will develop the proof in a subsequent work \cite{EMilmanNeeman-GaussianMultiBubbleConj}. 

\subsection{Stable stationary clusters have flat interfaces}

As an intermediate step in our proof of the Gaussian Multi-Bubble Conjecture for general $k \leq n+1$, we will also show in \cite{EMilmanNeeman-GaussianMultiBubbleConj} that clusters satisfying the aforementioned regularity assumptions, which are isoperimetrically stationary:
\[
\delta_X V = 0 \;\; \Rightarrow \;\; \delta_X A = 0 , 
\]
and stable:
\[
\delta_X V = 0 \;\; \Rightarrow \;\; Q(X) \geq 0 , 
\]
necessarily have flat interfaces. In particular, this applies to the case $k=3$ and $n \geq 2$ treated in this work, and adds additional information to the structure of clusters which are not necessarily isoperimetrically minimizing. In the single-bubble case ($k=2$), this was previously shown by McGonagle and Ross in \cite{McGonagleRoss:15}.

\bibliographystyle{plain}

\begin{thebibliography}{10}

\bibitem{AlmgrenMemoirs}
F.~J. Almgren, Jr.
\newblock Existence and regularity almost everywhere of solutions to elliptic
  variational problems with constraints.
\newblock {\em Mem. Amer. Math. Soc.}, 4(165), 1976.

\bibitem{BakryLedoux}
D.~Bakry and M.~Ledoux.
\newblock L\'evy-{G}romov's isoperimetric inequality for an
  infinite-dimensional diffusion generator.
\newblock {\em Invent. Math.}, 123(2):259--281, 1996.

\bibitem{BayleThesis}
V.~Bayle.
\newblock {\em Propri\'et\'es de concavit\'e du profil isop\'erim\'etrique et
  applications}.
\newblock PhD thesis, Institut Joseph Fourier, Grenoble, 2004.

\bibitem{BayleRosales}
V.~Bayle and C.~Rosales.
\newblock Some isoperimetric comparison theorems for convex bodies in
  {R}iemannian manifolds.
\newblock {\em Indiana Univ. Math. J.}, 54(5):1371--1394, 2005.

\bibitem{BobkovGaussianIsopInqViaCube}
S.~G. Bobkov.
\newblock An isoperimetric inequality on the discrete cube, and an elementary
  proof of the isoperimetric inequality in {G}auss space.
\newblock {\em Ann. Probab.}, 25(1):206--214, 1997.

\bibitem{BobkovLocalizedProofOfGaussianIso}
S.~G. Bobkov.
\newblock A localized proof of the isoperimetric {B}akry-{L}edoux inequality
  and some applications.
\newblock {\em Teor. Veroyatnost. i Primenen.}, 47(2):340--346, 2002.

\bibitem{Borell-GaussianIsoperimetry}
Ch. Borell.
\newblock The {B}runn--{M}inkowski inequality in {G}auss spaces.
\newblock {\em Invent. Math.}, 30:207--216, 1975.

\bibitem{Borell-GaussianNoiseStability}
Ch. Borell.
\newblock Geometric bounds on the {O}rnstein-{U}hlenbeck velocity process.
\newblock {\em Z. Wahrsch. Verw. Gebiete}, 70(1):1--13, 1985.

\bibitem{BuragoZalgallerBook}
Yu.~D. Burago and V.~A. Zalgaller.
\newblock {\em Geometric inequalities}, volume 285 of {\em Grundlehren der
  Mathematischen Wissenschaften [Fundamental Principles of Mathematical
  Sciences]}.
\newblock Springer-Verlag, Berlin, 1988.

\bibitem{CaffarelliContraction}
L.~A. Caffarelli.
\newblock Monotonicity properties of optimal transportation and the {FKG} and
  related inequalities.
\newblock {\em Comm. Math. Phys.}, 214(3):547--563, 2000.

\bibitem{CarlenKerceEqualityInGaussianIsop}
E.~A. Carlen and C.~Kerce.
\newblock On the cases of equality in {B}obkov's inequality and {G}aussian
  rearrangement.
\newblock {\em Calc. Var. Partial Differential Equations}, 13(1):1--18, 2001.

\bibitem{CES-RegularityOfMinimalSurfacesNearCones}
M.~Colombo, N.~Edelen, and L.~Spolaor.
\newblock The singular set of minimal surfaces near polyhedral cones.
\newblock arXiv:1709.09957, 2017.

\bibitem{CorneliCorwinEtAl-DoubleBubbleInSandG}
J.~Corneli, I.~Corwin, S.~Hurder, V.~Sesum, Y.~Xu, E.~Adams, D.~Davis, M.~Lee,
  R.~Visocchi, and N.~Hoffman.
\newblock Double bubbles in {G}auss space and spheres.
\newblock {\em Houston J. Math.}, 34(1):181--204, 2008.

\bibitem{CorneliHoffmanEtAl-DoubleBubbleIn3D}
J.~Corneli, N.~Hoffman, P.~Holt, G.~Lee, N.~Leger, S.~Moseley, and
  E.~Schoenfeld.
\newblock Double bubbles in {${\bf S}^3$} and {${\bf H}^3$}.
\newblock {\em J. Geom. Anal.}, 17(2):189--212, 2007.

\bibitem{CottonFreeman-DoubleBubbleInSandH}
A.~Cotton and D.~Freeman.
\newblock The double bubble problem in spherical space and hyperbolic space.
\newblock {\em Int. J. Math. Math. Sci.}, 32(11):641--699, 2002.

\bibitem{EhrhardPhiConcavity}
A.~Ehrhard.
\newblock Sym\'etrisation dans l'espace de {G}auss.
\newblock {\em Math. Scand.}, 53(2):281--301, 1983.

\bibitem{EhrhardGaussianIsopEqualityCases}
A.~Ehrhard.
\newblock \'{E}l\'ements extr\'emaux pour les in\'egalit\'es de
  {B}runn-{M}inkowski gaussiennes.
\newblock {\em Ann. Inst. H. Poincar\'e Probab. Statist.}, 22(2):149--168,
  1986.

\bibitem{Foisy-UGThesis}
J.~Foisy.
\newblock Soap bubble clusters in $\mathbb{R}^2$ and $\mathbb{R}^3$.
\newblock Undergraduate thesis, {W}illiams {C}ollege, Williamstown, MA, 1991.

\bibitem{SMALL93}
J.~Foisy, M.~Alfaro, J.~Brock, N.~Hodges, and J.~Zimba.
\newblock The standard double soap bubble in {${\bf R}^2$} uniquely minimizes
  perimeter.
\newblock {\em Pacific J. Math.}, 159(1):47--59, 1993.

\bibitem{GromovGeneralizationOfLevy}
M.~Gromov.
\newblock {P}aul {L}\'evy's isoperimetric inequality.
\newblock preprint, I.H.E.S., 1980.

\bibitem{Gromov}
M.~Gromov.
\newblock {\em Metric structures for {R}iemannian and non-{R}iemannian spaces},
  volume 152 of {\em Progress in Mathematics}.
\newblock Birkh\"auser Boston Inc., Boston, MA, 1999.

\bibitem{HHS95}
J.~Hass, M.~Hutchings, and R.~Schlafly.
\newblock The double bubble conjecture.
\newblock {\em Electron. Res. Announc. Amer. Math. Soc.}, 1(3):98--102, 1995.

\bibitem{HeilmanJagannathNaor-PropellerInR3}
S.~Heilman, A.~Jagannath, and A.~Naor.
\newblock Solution of the propeller conjecture in {$\Bbb{R}^3$}.
\newblock {\em Discrete Comput. Geom.}, 50(2):263--305, 2013.

\bibitem{HeilmanMosselNeeman-GaussianNoiseStability}
S.~Heilman, E.~Mossel, and J.~Neeman.
\newblock Standard simplices and pluralities are not the most noise stable.
\newblock {\em Israel J. Math.}, 213(1):33--53, 2016.

\bibitem{Hutchings-StructureOfDoubleBubbles}
M.~Hutchings.
\newblock The structure of area-minimizing double bubbles.
\newblock {\em J. Geom. Anal.}, 7(2):285--304, 1997.

\bibitem{DoubleBubbleInR3}
M.~Hutchings, F.~Morgan, M.~Ritor\'e, and A.~Ros.
\newblock Proof of the double bubble conjecture.
\newblock {\em Ann. of Math. (2)}, 155(2):459--489, 2002.

\bibitem{KimEMilmanGeneralizedCaffarelli}
Y.-H. Kim and E.~Milman.
\newblock A generalization of {C}affarelli's contraction theorem via (reverse)
  heat flow.
\newblock {\em Math. Ann.}, 354(3):827--862, 2012.

\bibitem{KleinerProofOfCartanHadamardIn3D}
B.~Kleiner.
\newblock An isoperimetric comparison theorem.
\newblock {\em Invent. Math.}, 108(1):37--47, 1992.

\bibitem{Kuwert}
E.~Kuwert.
\newblock Note on the isoperimetric profile of a convex body.
\newblock In {\em Geometric analysis and nonlinear partial differential
  equations}, pages 195--200. Springer, Berlin, 2003.

\bibitem{Lang-ManifoldsBook}
S.~Lang.
\newblock {\em Differential and {R}iemannian manifolds}, volume 160 of {\em
  Graduate Texts in Mathematics}.
\newblock Springer-Verlag, New York, third edition, 1995.

\bibitem{LevyIsopInqOnSphere}
P.~L\'evy.
\newblock {\em Leons d'analyse fonctionelle}.
\newblock Gauthier-Villars, Paris, 1922.

\bibitem{MaggiBook}
F.~Maggi.
\newblock {\em Sets of finite perimeter and geometric variational problems: an
  introduction to {G}eometric {M}easure {T}heory}, volume 135 of {\em Cambridge
  Studies in Advanced Mathematics}.
\newblock Cambridge University Press, Cambridge, 2012.

\bibitem{Masters-DoubleBubbleInS2}
J.~D. Masters.
\newblock The perimeter-minimizing enclosure of two areas in {$S^2$}.
\newblock {\em Real Anal. Exchange}, 22(2):645--654, 1996/97.

\bibitem{McGonagleRoss:15}
M.~McGonagle and J.~Ross.
\newblock The hyperplane is the only stable, smooth solution to the
  isoperimetric problem in {G}aussian space.
\newblock {\em Geom. Dedicata}, 178(1):277--296, 2015.

\bibitem{EMilmanNeeman-GaussianMultiBubbleConj}
E.~Milman and J.~Neeman.
\newblock The {G}aussian multi-bubble conjecture.
\newblock In Preparation, 2018.

\bibitem{EMilmanNeeman-FunctionalVersions}
E.~Milman and J.~Neeman.
\newblock Functional {G}aussian inequalities for vector-valued functions.
\newblock In Preparation, 2018.

\bibitem{MorganSoapBubblesInR2}
F.~Morgan.
\newblock Soap bubbles in {${\bf R}^2$} and in surfaces.
\newblock {\em Pacific J. Math.}, 165(2):347--361, 1994.

\bibitem{MorganRegularityOfMinimizers}
F.~Morgan.
\newblock Regularity of isoperimetric hypersurfaces in {R}iemannian manifolds.
\newblock {\em Trans. Amer. Math. Soc.}, 355(12):5041--5052 (electronic), 2003.

\bibitem{MorganManifoldsWithDensity}
F.~Morgan.
\newblock Manifolds with density.
\newblock {\em Notices Amer. Math. Soc.}, 52(8):853--858, 2005.

\bibitem{MorganBook5Ed}
F.~Morgan.
\newblock {\em Geometric measure theory}.
\newblock Elsevier/Academic Press, Amsterdam, fifth edition, 2016.
\newblock A beginner's guide, Illustrated by James F. Bredt.

\bibitem{MorganJohnson}
F.~Morgan and D.~L. Johnson.
\newblock Some sharp isoperimetric theorems for {R}iemannian manifolds.
\newblock {\em Indiana Univ. Math. J.}, 49(3):1017--1041, 2000.

\bibitem{MosselNeeman-GaussianNoiseStability}
E.~Mossel and J.~Neeman.
\newblock Robust optimality of {G}aussian noise stability.
\newblock {\em J. Eur. Math. Soc. (JEMS)}, 17(2):433--482, 2015.

\bibitem{Munkres-TopologyBook2ndEd}
J.~Munkres.
\newblock {\em Topology}.
\newblock Prentice Hall, second edition, 2000.

\bibitem{Pitt:82}
L.~D. Pitt.
\newblock Positively correlated normal variables are associated.
\newblock {\em Ann. Probab.}, 10(2):496--499, 1982.

\bibitem{Reichardt-DoubleBubbleInRn}
B.~W. Reichardt.
\newblock Proof of the double bubble conjecture in {$\bold R^n$}.
\newblock {\em J. Geom. Anal.}, 18(1):172--191, 2008.

\bibitem{SMALL03}
B.~W. Reichardt, C.~Heilmann, Y.~Y. Lai, and A.~Spielman.
\newblock Proof of the double bubble conjecture in {${\bf R}^4$} and certain
  higher dimensional cases.
\newblock {\em Pacific J. Math.}, 208(2):347--366, 2003.

\bibitem{RitoreRosalesMinimizersInEulideanCones}
M.~Ritor{\'e} and C.~Rosales.
\newblock Existence and characterization of regions minimizing perimeter under
  a volume constraint inside {E}uclidean cones.
\newblock {\em Trans. Amer. Math. Soc.}, 356(11):4601--4622 (electronic), 2004.

\bibitem{RosIsoperimetryInCrystals}
A.~Ros.
\newblock Isoperimetric inequalities in crystallography.
\newblock {\em J. Amer. Math. Soc.}, 17(2):373--388, 2004.

\bibitem{RCBMIsopInqsForLogConvexDensities}
C.~Rosales, A.~Ca{\~n}ete, V.~Bayle, and F.~Morgan.
\newblock On the isoperimetric problem in {E}uclidean space with density.
\newblock {\em Calc. Var. Partial Differential Equations}, 31(1):27--46, 2008.

\bibitem{SchmidtIsopOnModelSpaces}
E.~Schmidt.
\newblock Die {B}runn-{M}inkowskische {U}ngleichung und ihr {S}piegelbild sowie
  die isoperimetrische {E}igenschaft der {K}ugel in der euklidischen und
  nichteuklidischen {G}eometrie. {I}.
\newblock {\em Math. Nachr.}, 1:81--157, 1948.

\bibitem{SternbergZumbrun}
P.~Sternberg and K.~Zumbrun.
\newblock On the connectivity of boundaries of sets minimizing perimeter
  subject to a volume constraint.
\newblock {\em Comm. Anal. Geom.}, 7(1):199--220, 1999.

\bibitem{SudakovTsirelson}
V.~N. Sudakov and B.~S. Cirel{\cprime}son~[Tsirelson].
\newblock Extremal properties of half-spaces for spherically invariant
  measures.
\newblock {\em Zap. Nau\v cn. Sem. Leningrad. Otdel. Mat. Inst. Steklov.
  (LOMI)}, 41:14--24, 165, 1974.
\newblock Problems in the theory of probability distributions, II.

\bibitem{OpenProblemsInSoapBubbles96}
J.~M. Sullivan and F.~Morgan.
\newblock Open problems in soap bubble geometry.
\newblock {\em Internat. J. Math.}, 7(6):833--842, 1996.

\bibitem{Taylor-SoapBubbleRegularityInR3}
J.~E. Taylor.
\newblock The structure of singularities in soap-bubble-like and soap-film-like
  minimal surfaces.
\newblock {\em Ann. of Math. (2)}, 103(3):489--539, 1976.

\bibitem{Tripathi:99}
G.~Tripathi.
\newblock A matrix extension of the {C}auchy-{S}chwarz inequality.
\newblock {\em Econom. Lett.}, 63(1):1--3, 1999.

\bibitem{White-SoapBubbleRegularityInRn}
B.~White.
\newblock Regularity of singular sets for plateau-type problems.
\newblock Announced, see http://math.stanford.edu/$\sim$white/bibrev.htm Paper
  \#24, 1990's.

\bibitem{Wichiramala-TripleBubbleInR2}
W.~Wichiramala.
\newblock Proof of the planar triple bubble conjecture.
\newblock {\em J. Reine Angew. Math.}, 567:1--49, 2004.

\end{thebibliography}

\def\cprime{$'$} \def\textasciitilde{$\sim$}

\end{document}